\newtheorem{Thm}{Theorem}
\newtheorem{Cor}[Thm]{Corollary}
\newtheorem{Lem}[Thm]{Lemma}
\newtheorem{Prop}[Thm]{Proposition}
\newtheoremstyle{remexdef}{6pt}{13pt}{}{}{\bfseries}{.}{.5em}{\thmname{#1}~\thmnumber{#2}\thmnote{(#3)}}
\theoremstyle{remexdef}
\newtheorem{Rem}[Thm]{Remark}
\numberwithin{equation}{section}
\DeclareMathOperator*{\argmax}{argmax}
\DeclareMathOperator*{\sgn}{sgn}
\begin{document}


\newcommand{\N}{\mathbb{N}}
\newcommand{\Z}{\mathbb{Z}}
\newcommand{\Q}{\mathbb{Q}}
\newcommand{\R}{\mathbb{R}}
\newcommand{\C}{\mathbb{C}}
\newcommand{\LL}{\mathbb{L}}

\newcommand{\Prob}{\mathbb{P}}
\newcommand{\E}{\mathbb{E}}
\newcommand{\Filt}{\mathcal{F}}

\newcommand{\ind}{\boldsymbol{1}}

\renewcommand{\geq}{\geqslant}
\renewcommand{\leq}{\leqslant}

\begin{center}{\bf \Large Optimal expulsion and optimal confinement\\ of a Brownian particle with a switching cost}
\vskip 16pt
{\bf Robert C.~Dalang\footnote[1]{Partially supported by the Swiss National Foundation for Scientific Research.}}  and {\bf Laura Vinckenbosch}
\vskip 12pt

\'Ecole Polytechnique F\'ed\'erale de Lausanne


\end{center}

\begin{abstract}
We solve two stochastic control problems in which a player tries to minimize or maximize the exit time from an interval of a Brownian particle, by controlling its drift. The player can change from one drift to another but is subject to a switching cost. In each problem, the value function is written as the solution of a free boundary problem involving second order ordinary differential equations, in which the unknown boundaries are found by applying the principle of smooth fit. For both problems, we compute the value function, we exhibit the optimal strategy and we prove its generic uniqueness.
\end{abstract}


{\small
   
 {\bf Keywords:} stochastic control with switching cost, principle of smooth fit, free boundary problems, martingale method.

 {\bf 2010 MSC Subject Classifications} Primary: 93E20. Secondary: 60G40, 60J65. 
}


\section{Description of the problem}\label{sec:problem}
Consider a game in which the player's goal is to force a Brownian particle out of an interval (say $[0,1]$) as quickly as possible. At each instant, the player selects one of two opposite constant forces, either upwards or downwards, which adds or subtracts a constant drift $\mu$ to the Brownian motion. The player is allowed to switch between the two forces at any time, but at each switch, he incurs a penalty of $c$ units of time ($c>0$). The goal is to find a strategy that minimizes the expected penalized time, that is, the sum of the time needed for the particle to exit the interval and the switching penalties (``optimal expulsion problem").

We also solve the ``opposite'' problem, in which the goal is to keep the particle inside the interval for as long as possible, subject to the same kind of switching penalty, which is now subtracted from the time to exit the interval (``optimal confinement problem").

   These two abstract problems can be viewed in the context of various applications, such as maintaining an inventory between certain bounds by controlling the production rate \cite{FlemingSethiSoner,SallesVal}, or maintaining an insurance company's capital reserve between two bounds by controlling the insurance premium \cite{bensoussan}. In certain asymptotic limits, these quantities may behave like a Brownian motion, and a change of production rate or of premium may entail a switching cost. The two boundaries may represent certain levels that one may want to reach as soon (or as late) as possible. 
   
   The presence of the switching cost is the key issue here: for instance, Prokhorov \cite{Prohorov64} solves a similar problem but without cost penalty, and Mandl~\cite{Mandl67} treats a control problem for a Brownian motion under a constraint on the number of switchings. When there is no switching cost, then the solutions of these problems are well-known (see \cite[p.167-168]{FlemingSoner06}).
   
   There is a well-developed literature for studying this kind of stochastic control problem, including \cite{Fleming75, FlemingSoner06, Krylov77, OksendalSulemBook, YongZhou99}. Most frequently, these problems involve terminal costs and running costs. More recently, even more general kinds of performance criteria have been considered, as in ~\cite{Peskir2005}, where the criterion also involves the running maximum of the observed process.
In the presence of a switching cost, the problem falls into the theory of impulse control, as described for instance in \cite[Chapter 6]{OksendalSulemBook}. 
   
   In order to solve our two control problems,
we begin by formulating a free boundary problem for the value function. This involves splitting the state space into two regions, a {\em continuation} region and a {\em switching} region.  The particular form of the regions is guessed from the description of the problem. In the continuation region, the value function solves certain ordinary differential equations, and in the complement of this region, the value function satisfies a relationship related to the switching cost. There are also boundary conditions at the extremities of the interval. In general, this system of equations is {\em not} sufficient to characterize the value function, and this is indeed the case here: it is necessary to specify appropriate additional conditions at the free boundaries between the regions, which we do using the so-called \emph{principle of smooth fit} (see for example \cite[p.147]{Peskir06} and \cite[Section 5.3.4]{Pham}). 

   This approach has a long history, going back to \cite{Chernoff61,Grigelionis_Shiryaev66,shepp69,shiryaev}, and, more recently, \cite{Peskir06}, and has proved to be quite successful in a wide range of problems, including, in addition to those in the references just mentioned, the problem of optimal switching (without cost) between two Brownian motions \cite{Mandelbaum_Shepp_Vanderbei90}. Other examples of optimal switching problems related to ours and with explicit solutions can be found in ~\cite{Bayraktar2010, Duckworth2001, LyVath2007}. The problems considered in these papers differ from ours in particular because the state space is either the real line or the half-line, and there are no boundary conditions.
	

   In our problem, if the switching cost is high enough, then, obviously, one should switch drifts rarely or not at all, and in fact, it turns out that there is a critical value $c^*(\mu)$, which turns out to be the same in both the expulsion and confinement problems, above which it is optimal never to switch drifts. We compute this value explicitly, and then we show that for costs $c < c^*(\mu)$, the optimal strategy is determined by four thresholds that are the endpoints of the switching regions. We determine these thresholds explicitly, up to the resolution of a single transcendental equation (in each problem).

   This paper is organized as follows. In Section \ref{sec:solution}, we give the precise formulation of the optimal expulsion and confinement problems, we guess the form of the continuation and switching regions in order to state the free boundary problems and their solutions, we identify the critical cost $c^*(\mu)$, and we present our main results concerning the value functions and the description of the optimal controls. 
In Section \ref{sec:proofs}, we solve the free boundary problems, by considering first the case $c = c^*(\mu)$, and then use this for the cases $c< c^*(\mu)$ and  $c> c^*(\mu)$. Using the {\em verification method,} we prove that the solutions are indeed the value functions, by identifying a process that plays the role of Snell's envelope \cite{elkaroui}: it is a sub- (or super-) martingale for all strategies and a martingale for the optimal strategy. This proof makes use of the local time-space formula of \cite{Peskir07}. Finally, in Section \ref{sec:further}, we identify the consequences of a suboptimal action, which allows us to show that the optimal solutions are generically unique (except in the critical case $c= c^*(\mu)$, where there are two distinct optimal solutions), and we study the limiting case $c \downarrow 0$, from which we recover the zero-cost case of \cite{FlemingSoner06}.

\section{Formulation of the problem and main results}\label{sec:solution}

Let $(B_t)_{t\geq0}$  be a standard Brownian motion, defined on a probability space $(\Omega, \Filt, \Prob)$, such that $B_0=0$ a.s., let $(\Filt_t)_{t\geq 0}$ be its natural filtration and let $\mathcal{A}$ denote the set of all $\Filt_t$-adapted processes that are right continuous, piecewise constant and take values in $\{-1,1\}$. The elements of $\mathcal{A}$ are the \emph{strategies} available to the player. We consider a control model in which the system's state is given by the stochastic differential equation
\begin{equation}\label{eq:SDE:state}
dX^A_t =A_t\,\mu\,dt+dB_t,
\end{equation}
where $A=(A_t)_{t\geq0}\in\mathcal{A}$ and $\mu>0$ is a given positive constant. The random variable $X^A_t$ denotes the \emph{position of the particle at time} $t$ if the player is using the strategy $A$, and $A_t$ gives the direction in which the player is pushing at time $t$. The initial conditions are given by a family of probability measures $\left\{\Prob_{x,a},\,x\in[0,1],\,a\in\{\pm1\}\right\}$ defined by $\Prob_{x,a}(X^A_0=x,\, A_{0-}=a)=1$, with associated expectations $\E_{x,a}$. Here, $A_{0-}$ is the drift that applies just before time $0$, and which can change at time $0$ precisely, if desirable. Let $c>0$  be the \emph{switching cost} and let
\begin{equation}\label{eq:def:NtA}
N_t(A)=\sharp\{s\in\,[0,t]: A_{s-}\neq A_{s}\}
\end{equation}
be the number of switches of drift of the process  $X^A$ up to time $t$. Notice that $N_0(A)>0$ is possible. The \emph{cost function} for the minimization (resp. maximization) problem is then given by
$$J_c(x,a,A)=\E_{x,a}(\tau^A + c N_{\tau^A}(A)),$$
respectively,
$$J^{\text{max}}_c(x,a,A)=\E_{x,a}(\tau^A - c N_{\tau^A}(A)),$$
where
\begin{equation}\label{eq:def:tauA}
\tau^A=\inf\{t\geq0: X^A_t\notin \,]0,1[\}
\end{equation}
and the \emph{value functions} are respectively given by
\begin{align}
V_c(x,a)&=\inf_{A\in\mathcal{A}} J_c(x,a,A),\label{eq:control:problem:min}\\
V^{\text{max}}_c(x,a)&=\sup_{A\in\mathcal{A}} J^{\text{max}}_c(x,a,A).\label{eq:control:problem:max}
\end{align}
The goal is then to compute these value functions and to find \emph{optimal controls} $A^*\in\mathcal{A}$ and $G^*\in\mathcal{A}$ such that $V_c(x,a)=J_c(x,a,A^*)$ and $V^{\text{max}}_c(x,a)=J^{\text{max}}_c(x,a,G^*)$, for all $c>0$ and for all $\mu>0$.

\subsection{Properties of the solution}\label{sec:properties}
In order to formulate a free boundary problem for each value function, we will assume that the solution will satisfy three properties. The validity of these properties will be established in Section~\ref{sec:proofs}.\\

\noindent
\textbf{Property 1.} \emph{The optimal strategy is symmetric with respect to the initial drift and the value functions satisfy}
$V_c(x,a)=V_c(1-x, -a)$  \emph{and} $V^{\text{max}}_c(x,a)=V^{\text{max}}_c(1-x, -a)$,
\emph{for all} $x\in[0,1]$ \emph{and} $a\in\{\pm1\}$.\\

\noindent
\textbf{Property 2.} \emph{There exists a critical value $c^*(\mu)>0$ for which the optimal strategy is the constant strategy if $c>c^*(\mu)$.}\\

Indeed, for a given $\mu$, the expected exit time of a Brownian motion with constant drift $\pm\mu$ is a bounded function of the starting point $x\in[0,1]$. Thus, if the cost exceeds a certain value, then a reasonable player will never pay this cost to change the initial drift. This value is given by the maximal difference between the expected exit time from  $[0,1]$ of a Brownian motion starting at $x$ with a constant drift $\mu$ or $-\mu$. Namely, for $\nu\in\R$, set $\sigma^\nu=\inf\{t\geq 0: B_t+\nu t\notin\,]0,1[\,\}$ and $f^\nu(x)=\E_x(\sigma^\nu)$. Then, by taking the derivative at $0$ of the moment-generating function of a Brownian motion with drift (see~\cite[II.2.3]{Borodin_salminen02}), we find, after tedious calculations, that
\begin{equation}\label{eq:def:fnu}
f^\nu(x)=-\frac{x}{\nu}+\frac{1-e^{-2\nu x}}{\nu(1-e^{-2\nu})}
\end{equation}
and that
\begin{align}
c^*(\mu)&:=\max_{x\in[0,1]}\left(f^\mu(x)-f^{-\mu}(x)\right)=\max_{x\in[0,1]}\left( -\frac{2x}{\mu}+\frac{1}{\mu}+\frac{e^{2\mu x}-e^{2\mu(1-x)}}{\mu(e^{2\mu}-1)}\right)\nonumber\\
&=\frac{-1}{\mu^2}\left\{\log\left(\tfrac{\sinh(\mu)}{\mu}\left(1-\sqrt{1-\tfrac{\mu^2}{\sinh^2(\mu)}}\right) \right)+\sqrt{1-\tfrac{\mu^2}{\sinh^2(\mu)}}\right\}\,,\label{eq:def:c*}
\end{align}
for any $\mu>0$ (note that $\lim_{\mu\downarrow 0}c^*(\mu)=0$ and $\lim_{\mu\rightarrow +\infty}c^*(\mu)=0$). This maximum is attained at
\begin{equation}\label{eq:maximum:diff:fnu}
x^*=\frac{1}{2\mu}\left(\log\left(\tfrac{\sinh(\mu)}{\mu}\left(1-\sqrt{1-\tfrac{\mu^2}{\sinh^2(\mu)}}\right)\right)+\mu\right).
\end{equation}

The third property concerns the general shape of the optimal strategy. Indeed, consider two scenarios in the minimization problem. Assume first that one starts near $1$ with a positive drift. The player will keep this favorable drift for a while. If the particle goes down, then when it reaches $\frac{1}{2}$, both drifts are equivalent because of the symmetry property. Since the player is subject to a switching penalty, he will keep the positive drift. If the particle  keeps going down, then it will become more advantageous to change to a negative drift so that the particle will exit more quickly through $0$.

Secondly, if one starts close to $0$ with a positive drift, then the diffusive behavior of the particle makes it very likely that it will rapidly hit $0$ even if the drift is in the unfavorable direction. Thus, it is probably not worthwhile to pay the penalty to change the drift. These two facts are summarized by the following property:\\

\noindent
\textbf{Property $3$.} \emph{There exist two barriers $a_c$ and $b_c$ satisfying $0<a_c\leq b_c<\frac{1}{2}$ and such that it is optimal to keep a positive drift above $b_c$ or below $a_c$ and it is optimal to switch to a negative drift within $[a_c,b_c]$.}\\

In the case of the maximization problem, this property becomes:\\

\noindent
\textbf{Property $3^{\text{max}}$.} \emph{There exist two barriers $a^{\text{max}}_c$ and $b^{\text{max}}_c$ satisfying $\frac{1}{2}<a^{\text{max}}_c\leq b^{\text{max}}_c<1$ and such that it is optimal to keep a positive drift above $b^{\text{max}}_c$ or below $a^{\text{max}}_c$ and it is optimal to switch to a negative drift within $[a^{\text{max}}_c,b^{\text{max}}_c]$.}

\subsection{Solution of the minimization problem}
Via the \emph{the dynamic programming principle} and the Hamilton-Jacobi-Bellman equation (see e.g.~\cite[Section 3.1]{OksendalSulemBook}, \cite[Section 3]{YongZhou99}, or \cite[Chapter 5]{Pham}), the value function $V_{c}(x,\pm1)$ of the stochastic control problem~(\ref{eq:control:problem:min}) should satisfy the following system of variational inequalities:
\begin{align*}
\min\left\{ 1\pm \mu \frac{\partial V_c}{\partial x}(x,\pm1)+\frac{\partial^{2} V_c}{\partial x^{2}}(x,\pm1),\, -V_{c}(x,\pm 1)+V_{c}(x,\mp1)+c\right\}&=0, \; x\,\in\, ]0,1[\,,\\
V_{c}(0,\pm1)=V_{c}(1,\pm 1)&=0.
\end{align*}
We formulate this system as a \emph{free boundary problem} for the value function. In the region where it is optimal to keep the current drift, the value function must satisfy the ordinary differential equation~(\ref{eq:ODE:continuation}) below. In the region where it is optimal to switch to the other drift, we have the equation~(\ref{eq:switching}). With the three properties of Section \ref{sec:properties}, we expect that the value function should satisfy the following problem, in which $V_c(x+,a)$ (resp. $V_c(x-,a)$) denotes $\lim_{y\downarrow x}V_c(y,a)$ (resp. $\lim_{y\uparrow x}V_c(y,a)$):
\begin{subequations}\label{eq:FBP:Vc}
\begin{align}
\mu \frac{\partial V_c}{\partial x} (x,1)&+ \frac{1}{2}\,\frac{\partial^2 V_c}{\partial x^2} (x,1)=-1, && x\in[0,a_c[\,\cup\,]b_c,1]&& \label{eq:ODE:continuation}\\
V_c(x,1)&= V_c(x,-1)+ c, && x\in[a_c, b_c]&& \label{eq:switching} \\
V_c(0,1)&= V_c(1,1)=0, &&&& \mbox{(boundary conditions)}\label{eq:boundary:cond} \\
V_c(a_c-,1)&= V_c(a_c+,1),&&&& \mbox{(continuous fit)} \label{eq:fit1} \\
V_c(b_c-,1)&= V_c(b_c+,1), &&&& \mbox{(continuous fit)} \label{eq:fit2}\\
\frac{\partial V_c}{\partial x}(a_c-,1)&= \frac{\partial V_c}{\partial x}(a_c+,1),&&&& \mbox{(smooth fit)}\label{eq:smooth:fit1}\\
\frac{\partial V_c}{\partial x}(b_c-,1)&= \frac{\partial V_c}{\partial x}(b_c+,1), &&&& \mbox{(smooth fit)}\label{eq:smooth:fit2}\\
V_c(x,-1)&=V_c(1-x,1),  && x\in[0,1]  && \mbox{(symmetry)}\label{eq:symmetry}
\end{align}
\end{subequations}
where $a_c$ and $b_c$ are two unknowns satisfying $0<a_c\leq b_c<\frac{1}{2}$.


\begin{Prop}\label{prop:sol:FBP:min}
Let $c^*(\mu)$ be given by~(\ref{eq:def:c*}). There exists a unique solution $\{\bar{V}_c,a_c,b_c\}$ to the free boundary problem~(\ref{eq:FBP:Vc}).
\begin{enumerate}
  \item If $c=c^*(\mu)$, then the solution is given by
    \begin{equation}\label{eq:sol:FBP:c*:ac}
    a_{c^*}=b_{c^*}=\frac{1}{2\mu}\left(\log\left(\tfrac{\sinh(\mu)}{\mu}\left(1-\sqrt{1-\tfrac{\mu^2}{\sinh^2(\mu)}}\right)\right)+\mu\right)
    \end{equation}
    and
    \begin{equation}\label{eq:sol:FBP:c*:V}
    \bar{V}_{c^*}(x,a)=f^{a\mu}(x)=J_{c^*}(x,a,\tilde{A}), \quad\qquad\qquad\; x\in[0,1],\,a\in\{\pm1\},
    \end{equation}
    where $\tilde{A} = (\tilde{A}_t \equiv a)_{t\geq 0}$ is the constant strategy and $f^{a\mu}$ is defined by~(\ref{eq:def:fnu}). Moreover,
   \begin{equation}\label{eq:ac*:argmax}
     a_{c^*}=\argmax_{x\in[0,1]}\left\{f^{\mu}(x)-f^{-\mu}(x)\right\}\quad\text{ and }\quad f^\mu(a_{c^*})=f^{-\mu}(a_{c^*})+c^*(\mu).
   \end{equation}

  \item If $\,0<c<c^*(\mu)$, then the solution is given by
        \begin{align}\label{eq:sol:FBP:min:c<c*}
        \bar{V}_c(x,1)&=\left\{
           \begin{array}{ll}
            \displaystyle   -\tfrac{x}{\mu}+\beta_c \left(e^{-2\mu x}-1\right), & x\in[0,a_c[\,, \\
            \displaystyle   \tfrac{x}{\mu}+\alpha_c \left(e^{2\mu x}-1\right)+ c, & x\in[a_c,b_c], \\
            \displaystyle   \tfrac{1-x}{\mu}+\alpha_c \left(e^{2\mu(1-x)}-1\right), & x\in\,]b_c,1], \\[1ex]
           \end{array}
         \right.\\
         \bar{V}_c(x,-1)&=\,\bar{V}_c(1-x,1),\hspace{18ex} x\in[0,1],\nonumber
        \end{align}
        where $b_c\in\left[0,\frac{1}{2}\right[$ is the unique solution $x$ of the transcendental equation
        \begin{equation}\label{eq:bc}
        e^{4\mu x-2\mu} ( 2\mu x-\mu+c\mu^2-1) + 2\mu x-\mu+c\mu^2+1=0,
        \end{equation}
        \begin{equation}\label{eq:alpha}
        \alpha_c=\frac{-e^{-\mu}}{2\mu^2\cosh(2\mu b_c-\mu)},
        \end{equation}
        $a_c$ is the unique solution $y\in\,]0,b_c[$ of the transcendental equation
        \begin{equation}\label{eq:ac}
        \mu^2\alpha_c e^{4\mu y}+ (1-2\mu^2\alpha_c)e^{2\mu y} -2\mu y +\mu^2\alpha_c  -c\mu^2-1=0
        \end{equation}
        and
        \begin{equation}\label{eq:beta}
        \beta_c=-\alpha_c e^{4\mu a_c}-\frac{1}{\mu^2}\,e^{2\mu a_c}.
        \end{equation}
\end{enumerate}
\end{Prop}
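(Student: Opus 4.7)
The plan is to construct the solution explicitly in each case by integrating the ODE on the continuation regions, propagating through the switching interval via the symmetry (\ref{eq:symmetry}), and then fixing the free parameters through the continuous and smooth fit conditions; uniqueness will reduce in the first case to strict unimodality of a certain function, and in the second to monotonicity of the transcendental equations. For $c=c^*(\mu)$, I would start from the fact that $f^{a\mu}$ satisfies $a\mu\,y'(x)+\tfrac{1}{2}y''(x)=-1$ with $y(0)=y(1)=0$ by Dynkin's formula for the expected exit time from $(0,1)$ of Brownian motion with drift $a\mu$, together with the symmetry $f^\mu(x)=f^{-\mu}(1-x)$ which follows directly from (\ref{eq:def:fnu}). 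Taking $a_{c^*}=b_{c^*}=x^*$ degenerates the switching interval to a single point, so $\bar V_{c^*}(\,\cdot\,,a)=f^{a\mu}$ automatically satisfies the ODE on $[0,1]\setminus\{x^*\}$, the boundary conditions, the symmetry, and both continuous and smooth fit at $x^*$. The only remaining requirement, the switching condition at $x^*$, reads $f^\mu(x^*)-f^{-\mu}(x^*)=c^*(\mu)$, which is exactly the definitions in (\ref{eq:def:c*}) and (\ref{eq:maximum:diff:fnu}). Uniqueness in this case reduces to strict unimodality of $g(x):=f^\mu(x)-f^{-\mu}(x)$ on $[0,1]$, which I would verify by analyzing the sign of $g'$.

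For $0<c<c^*(\mu)$, the general solution of $\mu y'+\tfrac{1}{2}y''=-1$ is $-x/\mu+C_1+C_2e^{-2\mu x}$; imposing $\bar V_c(0,1)=0$ on $[0,a_c)$ gives the first branch of (\ref{eq:sol:FBP:min:c<c*}), and imposing $\bar V_c(1,1)=0$ on $(b_c,1]$, after the reparametrization $\alpha_c=C_2 e^{-2\mu}$, gives the third branch. On $[a_c,b_c]$, the switching condition combined with $\bar V_c(x,-1)=\bar V_c(1-x,1)$ (applicable because $1-x\in(b_c,1]$ when $b_c<\tfrac{1}{2}$) transports the third branch into the middle branch. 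The smooth fit at $b_c$, using the identity $e^{2\mu b_c}+e^{2\mu(1-b_c)}=2e^\mu\cosh(2\mu b_c-\mu)$, yields (\ref{eq:alpha}); substituting into the continuous fit at $b_c$ reduces to $\tanh(2\mu b_c-\mu)=\mu(2b_c-1)+c\mu^2$, which rearranges to (\ref{eq:bc}). Analogously, the smooth fit at $a_c$ gives (\ref{eq:beta}), and combining with the continuous fit at $a_c$ produces (\ref{eq:ac}).

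The main obstacle is the unique solvability of (\ref{eq:bc}) and (\ref{eq:ac}) in the prescribed intervals, together with the ordering $0<a_c\leq b_c<\tfrac{1}{2}$. For (\ref{eq:bc}), setting $\phi(u):=\tanh u-u$ with $u=\mu(2b_c-1)$ recasts the equation as $\phi(u)=c\mu^2$; since $\phi'(u)=-\tanh^2 u<0$ for $u\neq 0$, $\phi$ is strictly decreasing from $+\infty$ to $0$ on $(-\infty,0)$, so the negative root is unique, and using (\ref{eq:ac*:argmax}) one checks that it lies in the interval corresponding to $b_c\in(x^*,\tfrac{1}{2})$ exactly when $0<c<c^*(\mu)$. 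Equation (\ref{eq:ac}) for $a_c$ is the subtlest step: once $\alpha_c<0$ is fixed, I would treat the left-hand side as a function $F(y)$ on $(0,b_c)$, substitute $w=e^{2\mu y}$ to reveal its quadratic-in-$w$ plus linear-in-$y$ structure, evaluate $F$ at the endpoints to detect a sign change, and perform a derivative analysis to prove strict monotonicity and hence uniqueness of the root. Finally, the strict inequality $a_c<b_c$ for $c<c^*(\mu)$ and the positivity $a_c>0$ would follow from continuous dependence on $c$ together with the limiting values $a_c\to 0$, $b_c\to\tfrac{1}{2}$ as $c\downarrow 0$ and $a_c,b_c\to x^*$ as $c\uparrow c^*(\mu)$.
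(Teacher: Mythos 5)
Your overall route is the same as the paper's: integrate the ODE on the continuation intervals, transport the third branch into $[a_c,b_c]$ via the switching and symmetry conditions, and convert the four fit conditions into the transcendental equations~(\ref{eq:bc}) and~(\ref{eq:ac}) together with the formulas~(\ref{eq:alpha}) and~(\ref{eq:beta}). Your treatment of~(\ref{eq:bc}), rewriting it as $\tanh u-u=c\mu^2$ with $u=\mu(2b_c-1)$ and using $\phi'(u)=-\tanh^2 u<0$, is a clean (slightly slicker) substitute for the paper's monotonicity argument for $h_c$, and it correctly places $b_c$ in $\,]x^*,\tfrac{1}{2}[\,$ when $0<c<c^*(\mu)$.

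There are, however, gaps at the two places where the real work lies. First, for $c=c^*(\mu)$, uniqueness does \emph{not} reduce to strict unimodality of $g=f^\mu-f^{-\mu}$: a competing solution of~(\ref{eq:FBP:Vc}) with $a_c<b_c$ has branches $-x/\mu+\beta(e^{-2\mu x}-1)$ and $(1-x)/\mu+\alpha(e^{2\mu(1-x)}-1)$ with free constants $\alpha,\beta$, so it need not coincide with $f^{\pm\mu}$, and properties of $g$ say nothing about it. One must still solve the system of fit equations and show that the equation for $a_{c^*}$ has its unique admissible root exactly at the endpoint $2\mu a_{c^*}=2\mu b_{c^*}$; unimodality of $g$ only disposes of degenerate competitors. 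Second, and more importantly, the ``sign change at the endpoints'' for~(\ref{eq:ac}) is precisely the crux, and you have not verified it. With $F=\tilde{h}_c$ as in~(\ref{eq:def:hctildes}), $F(0)=-c\mu^2<0$ is immediate, but showing $F(2\mu b_c)>0$ requires the nontrivial identity $\tilde{h}_c(2\mu b_c)=-\mu+\sinh(\mu)/\cosh(2\mu b_c-\mu)$, together with the observation that this expression vanishes exactly at the critical root $t_{c^*}=2\mu b_{c^*}-\mu$ and is increasing in $t_c$ on $\,]-\mu,0[\,$; without this, the existence of a root in $\,]0,2\mu b_c[\,$ (hence $a_c<b_c$) is unproven, and a derivative analysis of $F$ alone cannot supply this sign. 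Relatedly, your fallback argument for $0<a_c<b_c$ via continuity in $c$ and the limits as $c\downarrow0$ and $c\uparrow c^*(\mu)$ is circular: continuity plus endpoint behaviour does not preclude $a_c=b_c$ (or $a_c=0$) at some intermediate cost, which is exactly what must be excluded. Once $F(2\mu b_c)>0$ is established, and once one notes that $F$ is monotone on $[0,2\mu b_c]$ because its only interior critical point $\log\left(-1/(2\mu^2\alpha_c)\right)=\mu+\log\cosh(2\mu b_c-\mu)$ lies to the right of $2\mu b_c$, both the strict inclusion $0<a_c<b_c$ and the uniqueness of $a_c$ follow at once.
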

%
%
%
%
We now use the value of the barriers $a_c$ and $b_c$ to define the following four subsets of $[0,1]$:
\begin{align*}
C_1&= [0,a_c[\,\cup\,]b_c,1],       &&C_{-1}= [0,1-b_c[\,\cup\,]1-a_c,1],\\
D_1&= [a_c,b_c],                    &&D_{-1}= [1-b_c,1-a_c].
\end{align*}
The subsets $C_a$ and $D_a$ are called respectively the \emph{continuation} and the \emph{switching} region for the drift $a$. For $0<c\leq c^*$, we define the candidate optimal strategy $A^c$ as follows. Let $(x,a)\in[0,1]\times\{-1,1\}$ be the initial conditions, define inductively an increasing sequence $(\tau_n)_{n\in\N}$ of stopping times by $\tau_0=0$ and for $n\geq0$,
$$\tau_{n+1}=\left\{\begin{array}{ll}
\inf\left\{t\geq\tau_n: X_t^n\in D_{(-1)^n a}\right\}, &\mbox{ if } \{\cdots\}\neq\emptyset, \\
+\infty, &\mbox{ otherwise},
\end{array}\right.$$
where $X_{0}^{-1}=x$,  and for $n\geq0$,  $X^n$ is the process defined as the solution of
\begin{align*}
dX^n_t&=(-1)^n a\mu\,dt+dB_t, \qquad t\in[\tau_n,+\infty[\,, \\
X^n_{\tau_n}&=X^{n-1}_{\tau_n}.
\end{align*}
Set $\tau=\inf\left\{t\geq0: \exists\, n\geq0 \text{ with } X_t^n\notin\,]0,1[ \text{ and } t\in[\tau_n,\tau_{n+1}[\,\right\}$. Then define $A^c_{0-}=a$, $A^c_t=a$ for $t\in[\tau_0,\tau_1[\,$, and for $n\geq1$,
\begin{equation}\label{eq:def:candidate:Ac:a}
A^c_t=-A^c_{\tau_n-},   \qquad\text{ for } t\in[\tau_n,\tau_{n+1}[\,.
\end{equation}

This construction implies that $A^c$ satisfies for all $t\geq0$
\begin{equation}\label{eq:def:candidate:Ac}
A^c_t=
\left\{
  \begin{array}{ll}
    A^c_{t-},   &\qquad \text{ if } X^{A^c}_t \in C_{A^c_{t-}}, \\
    -A^c_{t-},  &\qquad \text{ if } X^{A^c}_t \in D_{A^c_{t-}}, \\
  \end{array}
\right.
\end{equation}
and the controlled process $X^{A^c}$ is the solution of $dX^{A^c}_t=A^c_t\mu\,dt+dB_t$ and $X^{A^c}_0=x$. Observe that the sequence of the stopping times $(\tau_n)$ corresponds to the jump times of the strategy $A^c$, that $X^{A^c}_t=X^n_t$ on $[\tau_n,\tau_{n+1}[$ and that $\tau=\tau^{A^c}$, the exit time from  $]0,1[$ of $X^{A^c}$. This candidate strategy is pictured in Figure~\ref{figure:controle:AcGc} and it satisfies the following properties.

\begin{Prop}\label{prop:esperance:tauAc}
If $0<c\leq c^*(\mu)$, then for all $x\in[0,1]$ and $a\in\{-1,1\}$, we have
$$\E_{x,a}\left(\tau^{A^c}\right)<+\infty\quad\mbox{ and } \quad \E_{x,a}\left(N_{\tau^{A^c}}(A^c)\right)<+\infty.$$
\end{Prop}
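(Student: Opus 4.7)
The strategy is to show that switches are slow to accumulate, by exploiting that just after each switch the new drift opposes the motion required to trigger the next switch, and that each inter-switch period has uniformly bounded expected duration. Write $N:=N_{\tau^{A^c}}(A^c)$ and recall that the switching regions $D_1=[a_c,b_c]$ and $D_{-1}=[1-b_c,1-a_c]$ are disjoint, lying respectively in $[0,\tfrac12]$ and $[\tfrac12,1]$, since $b_c<\tfrac12$ (in the critical case $c=c^*(\mu)$, $a_c=b_c=x^*$ and the regions reduce to singletons, but the argument below is unchanged).

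I would first establish a geometric bound on $N$. Just after a switch at $\tau_k$ ($k\geq 1$), the particle lies in one switching region, say $D_1$ by symmetry, and its drift becomes $-\mu$. For the next switch to occur, $X^{A^c}$ must reach the level $1-b_c$ before exiting through $0$. The classical hitting-probability formula for a Brownian motion with drift $-\mu$ on $[0,1-b_c]$ gives
\begin{equation*}
\Prob_y\bigl(\text{hit }1-b_c\text{ before }0\bigr)=\frac{e^{2\mu y}-1}{e^{2\mu(1-b_c)}-1}\leq p:=\frac{e^{2\mu b_c}-1}{e^{2\mu(1-b_c)}-1}<1,
\end{equation*}
uniformly in $y\in[a_c,b_c]$. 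Using the strong Markov property at $\tau_k$ and the symmetric bound for switches starting from $D_{-1}$, one obtains $\Prob(N\geq k+1\mid\Filt_{\tau_k})\leq p$ on $\{N\geq k\}$, hence $\Prob(N\geq k)\leq p^{k-1}$ for $k\geq 1$, and $\E_{x,a}[N]\leq 1/(1-p)<\infty$.

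I would then bound $\E_{x,a}[\tau^{A^c}]$. On each interval $[\tau_{k-1},\tau_k]$ and on the final interval $[\tau_N,\tau^{A^c}]$ the drift is constant, so the length is dominated by the exit time from $\,]0,1[\,$ of a Brownian motion with drift $\pm\mu$ starting from the corresponding position, whose expectation is uniformly bounded by $M:=\max_{y\in[0,1]}\max\{f^\mu(y),f^{-\mu}(y)\}<\infty$ in view of~(\ref{eq:def:fnu}). Exploiting the inclusion $\{N\geq k\}\subset\{N\geq k-1\}$ and conditioning on $\Filt_{\tau_{k-1}}$, the strong Markov property yields $\E[(\tau_k-\tau_{k-1})\,\ind_{\{N\geq k\}}]\leq M\,\Prob(N\geq k-1)$, and similarly $\E[\tau^{A^c}-\tau_N]\leq M$. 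Summing the resulting series gives
\begin{equation*}
\E_{x,a}[\tau^{A^c}]\leq M+\sum_{k\geq 1}M\,\Prob(N\geq k-1)=M\bigl(2+\E_{x,a}[N]\bigr)<\infty.
\end{equation*}

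The main obstacle is the measurability bookkeeping at each $\tau_k$: since $\{N\geq k\}=\{\tau_k<\infty\}$ is $\Filt_{\tau_k}$- but not $\Filt_{\tau_{k-1}}$-measurable, some care is needed in conditioning. The inclusion $\{N\geq k\}\subset\{N\geq k-1\}$ bypasses this difficulty, and the symmetry between the two drift regimes reduces the geometric estimate to the single worst-case starting position $y=b_c$.
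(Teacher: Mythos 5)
Your proof is correct and follows essentially the same route as the paper's: a geometric tail bound for $N_{\tau^{A^c}}(A^c)$ obtained from the strong Markov property at the switching times together with the hitting probability of the opposite switching region, followed by a telescoping decomposition of $\tau^{A^c}$ into inter-switch increments whose expected durations are uniformly bounded by exit times of constant-drift Brownian motions (the paper computes the exact geometric law and uses the conditional expected exit times $E_x(y,a,b)$, but this is only a cosmetic difference). The one harmless imprecision is the claim $\E_{x,a}[\tau^{A^c}-\tau_N]\leq M$: since the last switch is not a stopping time you cannot condition there directly, but your own estimate $\E[(\tau^{A^c}-\tau_k)\ind_{\{N=k\}}]\leq M\,\Prob(N\geq k)$ summed over $k$ gives $M(1+\E_{x,a}[N])$ for that term, which still yields finiteness.
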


Now that we have exhibited a candidate strategy and a candidate value function as the solution of a free boundary problem, we can state the optimality theorem.

\begin{Thm}\label{verification:thm:min}
Let $c^*(\mu)$ be given by~(\ref{eq:def:c*}).  If $A^c$ is the control satisfying~(\ref{eq:def:candidate:Ac:a}) (and~(\ref{eq:def:candidate:Ac})) and if $\bar{V}_c$ denotes the unique solution of the free boundary problem~(\ref{eq:FBP:Vc}) given in Proposition~\ref{prop:sol:FBP:min}, then:
\begin{enumerate}
  \item for $0<c\leq c^*(\mu)$, the value function is $V_c = \bar{V}_c$ and $A^c$ is an optimal control for the problem~(\ref{eq:control:problem:min});

  \item for $c>c^*(\mu)$, the value function is $V_c = \bar{V}_{c^*(\mu)}$ and $\tilde{A}=(\tilde{A}_t\equiv a)_{t\geq0}$ is $\Prob_{x,a}$-a.s.~the unique optimal control for the problem~(\ref{eq:control:problem:min}).
\end{enumerate}
\end{Thm}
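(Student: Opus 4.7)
The plan is a verification argument based on a Snell-envelope-style process. Fix $(x,a) \in [0,1] \times \{\pm 1\}$ and, for any $A \in \mathcal{A}$, set
$$M^A_t \;:=\; \bar V_c(X^A_t, A_t) + t + c\,N_t(A), \qquad t \geq 0,$$
with the convention $M^A_{0-} := \bar V_c(x,a)$. I will show that under $\Prob_{x,a}$ the process $M^A$ is a submartingale on $[0-,\tau^A]$ for every admissible $A$, and a true martingale when $A = A^c$. Granted this, Doob's optional sampling at $\tau^A \wedge n$ together with the boundary conditions (\ref{eq:boundary:cond}) (so $M^A_{\tau^A} = \tau^A + c\,N_{\tau^A}(A)$) yields $\bar V_c(x,a) \leq J_c(x,a,A)$, with equality for $A = A^c$; the passage $n \to \infty$ is either trivial (when $J_c(x,a,A) = +\infty$) or controlled by Proposition \ref{prop:esperance:tauAc} combined with the boundedness of $\bar V_c$ on $[0,1]$. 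This proves part 1.

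For the decomposition of $M^A$, I apply the local time-space formula of \cite{Peskir07} to $\bar V_c(\cdot, A_t)$ on each interval between consecutive switching times of $A$, where $A_t$ is constant and $X^A$ evolves as a Brownian motion with drift $\pm\mu$. From Proposition \ref{prop:sol:FBP:min}, $\bar V_c(\cdot, a)$ is $C^1$ on $[0,1]$ and $C^2$ off the four interior points $\{a_c, b_c, 1-b_c, 1-a_c\}$; the smooth fit conditions (\ref{eq:smooth:fit1})-(\ref{eq:smooth:fit2}) and the symmetry (\ref{eq:symmetry}) force the local time contributions at these points to vanish, leaving
$$d\bar V_c(X^A_t, A_t) \;=\; \Bigl(A_t\mu\,\partial_x \bar V_c + \tfrac12\partial_{xx}\bar V_c\Bigr)(X^A_t, A_t)\,dt \;+\; \partial_x \bar V_c(X^A_t, A_t)\,dB_t$$
between jumps of $A$, with a jump of size $\bar V_c(X^A_{\tau_n}, -A_{\tau_n-}) - \bar V_c(X^A_{\tau_n}, A_{\tau_n-})$ at each switching time $\tau_n$. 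The submartingale property then reduces to two global inequalities for $\bar V_c$:
$$1 + a\mu\,\partial_x \bar V_c(x,a) + \tfrac12\partial_{xx}\bar V_c(x,a) \;\geq\; 0, \qquad \bar V_c(x,a) \;\leq\; \bar V_c(x,-a) + c, \qquad x \in [0,1],$$
with equality on $C_a$ (first) and on $D_a$ (second). The first is exactly (\ref{eq:ODE:continuation}) on $C_a$ and is verified by direct substitution of (\ref{eq:sol:FBP:min:c<c*}) on $D_a$; the second is (\ref{eq:switching}) on $D_a$ and is checked analogously on $C_a$. Along $A = A^c$, the construction (\ref{eq:def:candidate:Ac:a})-(\ref{eq:def:candidate:Ac}) ensures that both inequalities are saturated everywhere along the path, so $M^{A^c}$ is a true martingale; boundedness of $\partial_x \bar V_c$ on $[0,1]$ makes the stochastic integral a true martingale on $[0, \tau^A \wedge n]$.

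For part 2, I apply part 1 at $c = c^*(\mu)$: for $c > c^*(\mu)$ and any $A \in \mathcal{A}$,
$$J_c(x,a,A) \;=\; J_{c^*(\mu)}(x,a,A) + (c - c^*(\mu))\,\E_{x,a}\bigl(N_{\tau^A}(A)\bigr) \;\geq\; \bar V_{c^*(\mu)}(x,a) + (c - c^*(\mu))\,\E_{x,a}\bigl(N_{\tau^A}(A)\bigr).$$
Since $\tilde A$ has no switches and $J_c(x,a,\tilde A) = f^{a\mu}(x) = \bar V_{c^*(\mu)}(x,a)$ by (\ref{eq:sol:FBP:c*:V}), this gives $J_c(x,a,A) \geq J_c(x,a,\tilde A)$, so $V_c = \bar V_{c^*(\mu)}$. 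If $A \neq \tilde A$ with positive $\Prob_{x,a}$-probability, then $A$ must switch at least once with positive probability, so $\E_{x,a}(N_{\tau^A}(A)) > 0$ and the inequality is strict; this gives the $\Prob_{x,a}$-a.s.~uniqueness of $\tilde A$. The main technical obstacle will be the careful application of the local time-space formula at the four non-$C^2$ points and the verification that the smooth fit conditions eliminate the associated local time terms; the direct check of the two global variational inequalities from the explicit piecewise formulas of Proposition \ref{prop:sol:FBP:min} is the remaining computational step, and the limit $n \to \infty$ in the optional-sampling argument is controlled by Proposition \ref{prop:esperance:tauAc}.
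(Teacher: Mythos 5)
Your proposal follows essentially the same route as the paper: your process $M^A_t = t + cN_t(A) + \bar V_c(X^A_t,A_t)$ is exactly the paper's $H^{c,A}_t$ from \eqref{eq:def:H}, the submartingale/martingale dichotomy is Proposition~\ref{prop:martingale}, the local time-space decomposition with the smooth-fit conditions annihilating the local time term is the paper's proof of that proposition, and your limiting argument for part 1 and your treatment of $c>c^*(\mu)$ (comparison with the cost at level $c^*(\mu)$ plus the strict penalty $(c-c^*(\mu))\E_{x,a}(N_{\tau^A}(A))$ for uniqueness) coincide with the paper's.

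One caveat worth flagging: the two pointwise inequalities to which you reduce the submartingale property, namely $1+\mathbb{L}_a\bar V_c(x,a)\geq 0$ on $D_a$ and $\bar V_c(x,a)\leq \bar V_c(x,-a)+c$ on $C_a$, are not ``verified by direct substitution''; they are precisely Lemma~\ref{lem:generator:Vbar} and Lemma~\ref{lem:ineq:Vbar}, and their proofs carry most of the weight of the verification. The first reduces, via the identity $1+\mathbb{L}_1\bar V_c(x,1)=2\mu\,\frac{\partial\bar V_c}{\partial x}(x,-1)$ on $\,]a_c,b_c[\,$, to the sign estimate $1+2\mu^2\alpha_c e^{2\mu x}>1-1/\cosh(2\mu b_c-\mu)>0$, which uses the explicit formula \eqref{eq:alpha}. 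The second requires a four-case monotonicity analysis of $c+\bar V_c(x,-1)-\bar V_c(x,1)$ on the subintervals $[0,a_c[\,$, $]b_c,1-b_c[\,$, $[1-b_c,1-a_c]$ and $]1-a_c,1]$, locating the critical points of the auxiliary functions $d_1,d_2,d_3$ by means of the defining relations for $a_c,b_c,\alpha_c,\beta_c$. Your skeleton is correct and matches the paper's, but these load-bearing computations are asserted rather than carried out.
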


\begin{Rem}
In the case where $c=c^*(\mu)$, we see in Proposition~\ref{prop:sol:FBP:min} that the switching regions consist of one single point, at which a change of drift can be considered as insignificant. Indeed at this point, the price to pay for a change of drift is equal to the maximum expected profit provided by this change itself. Moreover, the same Proposition together with Theorem~\ref{verification:thm:min} show that the constant strategy and the candidate $A^{c^*}$ are both optimal in this case. The same remark will apply to the maximization problem. For a uniqueness result when $0<c<c^*(\mu)$, see Proposition \ref{prop:uniqueness}.
\end{Rem}

\subsection{Solution of the maximization problem}
Similar to the minimization problem, the value function $V^{\text{max}}_c(x,\pm1)$ should satisfy
\begin{align*}
\max\left\{ 1\pm \mu \frac{\partial V^{\text{max}}_c}{\partial x}(x,\pm1)+\frac{\partial^{2} V^{\text{max}}_c}{\partial x^{2}}(x,\pm1),\, -V^{\text{max}}_{c}(x,\pm 1)+V^{\text{max}}_{c}(x,\mp1)-c\right\}&=0, \; x\,\in\, ]0,1[\,,\\
V^{\text{max}}_{c}(0,\pm1)=V^{\text{max}}_{c}(1,\pm 1)&=0.
\end{align*}
In view of the three properties of Section \ref{sec:properties}, we formulate this system as the following free boundary problem for the value function:
\begin{subequations}\label{eq:FBP:Wc}
\begin{align}
&\mu \frac{\partial V^{\text{max}}_c}{\partial x}(x,1)+ \frac{1}{2}\frac{\partial^2 V^{\text{max}}_c}{\partial x^2}(x,1)=-1, && x\in[0,a^{\text{max}}_c[\,\cup&&\hspace{-4ex}]b^{\text{max}}_c,1] \label{eq:continuation:max}\\
&V^{\text{max}}_c(x,1)= V^{\text{max}}_c(x,-1)-c,                       &&x\in[a^{\text{max}}_c, b^{\text{max}}_c]&&             \label{eq:switching:max} \\
&V^{\text{max}}_c(0,1)= V^{\text{max}}_c(1,1)=0,                        &&&&\mbox{(boundary conditions)}\label{eq:boundary:condit:max} \\
&V^{\text{max}}_c(a^{\text{max}}_c-,1)= V^{\text{max}}_c(a^{\text{max}}_c+,1),                      &&&& \mbox{(continuous fit)} \label{eq:cond:fit1:max} \\
&V^{\text{max}}_c(b^{\text{max}}_c-,1)= V^{\text{max}}_c(b^{\text{max}}_c+,1),                      &&&& \mbox{(continuous fit)} \label{eq:cond:fit2:max}\\
&\frac{\partial V^{\text{max}}_c}{\partial x}(a^{\text{max}}_c-,1)= \frac{\partial V^{\text{max}}_c}{\partial x}(a^{\text{max}}_c+,1), &&&& \mbox{(smooth fit)}\label{eq:smooth:fit1:max}\\
&\frac{\partial V^{\text{max}}_c}{\partial x}(b^{\text{max}}_c-,1)= \frac{\partial V^{\text{max}}_c}{\partial x}(b^{\text{max}}_c+,1), &&&& \mbox{(smooth fit)}\label{eq:smooth:fit2:max}\\
&V^{\text{max}}_c(x,-1)=V^{\text{max}}_c(1-x,1),             && x\in[0,1]   &&\mbox{(symmetry)}\label{eq:symmetry:max}
\end{align}
\end{subequations}
where $a^{\text{max}}_c$ and $b^{\text{max}}_c$ are two unknowns satisfying $\frac{1}{2}< a^{\text{max}}_c\leq b^{\text{max}}_c<1$. Even though the solution of the free boundary problem \eqref{eq:FBP:Wc} is similar to the one in the minimization problem, the two problems are not symmetric as we shall see in the next proposition.


\begin{Prop}\label{prop:sol:FBP:max}
Let $c^*(\mu)$ be given by~(\ref{eq:def:c*}). There exists a unique solution $\{\bar{V}^{\text{max}}_c,a^{\text{max}}_c,b^{\text{max}}_c\}$ to the free boundary problem~(\ref{eq:FBP:Wc}).
\begin{enumerate}
\item If $c=c^*(\mu)$, then the solution is given by $a^{\text{max}}_c=b^{\text{max}}_c=1-a_c$ and
\begin{equation}\label{eq:sol:FBP:c*:W}
\bar{V}^{\text{max}}_{c^*}(x,a)=\bar{V}_{c^*}(x,a)=f^{a\mu}(x), \quad\qquad\qquad\; x\in[0,1],\,a\in\{\pm1\},
\end{equation}
where $a_c$ and $\bar{V}_{c^*}$ are given by Proposition~\ref{prop:sol:FBP:min} and $f^{a\mu}$ is defined by~(\ref{eq:def:fnu}).

\item If $\,0<c<c^*(\mu)$, then the solution is given by
\begin{align}\label{eq:sol:FBP:max:c<c*}
\bar{V}^{\text{max}}_c(x,1)&=\left\{
           \begin{array}{ll}
            \displaystyle   -\tfrac{x}{\mu}+\gamma_c \left(e^{-2\mu x}-1\right), & x\in[0,a^{\text{max}}_c[\,, \\
            \displaystyle   -\tfrac{1-x}{\mu}+\gamma_c \left(e^{-2\mu(1-x)}-1\right)-c, & x\in[a^{\text{max}}_c,b^{\text{max}}_c], \\
            \displaystyle   \tfrac{1-x}{\mu}+\delta_c \left(e^{2\mu(1-x)}-1\right), & x\in\,]b^{\text{max}}_c,1], \\
           \end{array}
         \right.\\
         \bar{V}^{\text{max}}_c(x,-1)&=\,\bar{V}^{\text{max}}_c(1-x,1),\hspace{21ex} x\in[0,1],\nonumber
        \end{align}
where $a^{\text{max}}_c=1-b_c$ and $\gamma_c=\alpha_c e^{2\mu}$ with $b_c$ and $\alpha_c$ given by Proposition~\ref{prop:sol:FBP:min},
$b^{\text{max}}_c$ is the unique solution $x\in\,]a^{\text{max}}_c,1[$ of the transcendental equation
\begin{equation}\label{eq:g2}
\gamma_c\mu^2e^{4\mu x-4\mu} +e^{2\mu x-2\mu}\left(1-2\gamma_c\mu^2\right)- 2\mu x + 2\mu-1+\gamma_c\mu^2+c\mu^2=0
\end{equation}
and
\begin{equation}\label{eq:delta}
\delta_c=-\tfrac{1}{\mu^2}\,e^{-2\mu(1-b^{\text{max}}_c)}-\gamma_c e^{-4\mu(1-b^{\text{max}}_c)}.
\end{equation}
Moreover, $b^{\text{max}}_c>1-a_c$.
\end{enumerate}
\end{Prop}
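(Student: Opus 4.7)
The plan is to mirror the strategy of Proposition~\ref{prop:sol:FBP:min}, exploiting Properties~$1$ and $3^{\text{max}}$ to split $[0,1]$ into three intervals and solve (\ref{eq:FBP:Wc}) piecewise. I first compute $\bar V^{\text{max}}_c(\cdot,1)$; the relation (\ref{eq:symmetry:max}) then yields $\bar V^{\text{max}}_c(\cdot,-1)$. On each of the two continuation intervals the ODE (\ref{eq:continuation:max}) has general solution $-x/\mu+Ke^{-2\mu x}+L$, and the two boundary conditions $\bar V^{\text{max}}_c(0,1)=\bar V^{\text{max}}_c(1,1)=0$ reduce the unknown constants to a single free parameter on each side, namely $\gamma_c$ and $\delta_c$, producing the first and third branches of (\ref{eq:sol:FBP:max:c<c*}). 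On the switching interval I combine (\ref{eq:switching:max}) with (\ref{eq:symmetry:max}) to write $\bar V^{\text{max}}_c(x,1)=\bar V^{\text{max}}_c(1-x,1)-c$; since $x\geq a^{\text{max}}_c>\tfrac12$ gives $1-x<\tfrac12<a^{\text{max}}_c$, I can insert the already-determined first branch at $1-x$ to obtain the middle branch.

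The four fit conditions (\ref{eq:cond:fit1:max})--(\ref{eq:smooth:fit2:max}) then provide four equations in $a^{\text{max}}_c,b^{\text{max}}_c,\gamma_c,\delta_c$. The two conditions at $a^{\text{max}}_c$ decouple into an equation for $\gamma_c$ alone and an equation for $a^{\text{max}}_c$ alone: the smooth-fit condition reduces to $\gamma_c=-e^{\mu}/(2\mu^2\cosh(\mu-2\mu a^{\text{max}}_c))$, and after inserting this back, the continuous-fit condition simplifies to the transcendental equation $\tanh(2\mu a^{\text{max}}_c-\mu)=2\mu a^{\text{max}}_c-\mu-c\mu^2$. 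An analogous reduction shows that (\ref{eq:bc}) is equivalent to $\tanh(2\mu b_c-\mu)=2\mu b_c-\mu+c\mu^2$, so the substitution $y\mapsto -y$ together with uniqueness of the solutions forces $2\mu a^{\text{max}}_c-\mu=-(2\mu b_c-\mu)$, that is, $a^{\text{max}}_c=1-b_c$, and in turn $\gamma_c=\alpha_c e^{2\mu}$. At $b^{\text{max}}_c$ the smooth-fit condition gives (\ref{eq:delta}) directly, and substituting (\ref{eq:delta}) into the continuous-fit condition, after multiplying by $\mu^2$ and expanding the resulting quadratic in $e^{2\mu b^{\text{max}}_c-2\mu}$, produces exactly (\ref{eq:g2}).

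What remains is to show that (\ref{eq:g2}) has a unique solution in $\,]1-b_c,1[\,$ and that this solution satisfies the strict inequality $b^{\text{max}}_c>1-a_c$. Denoting by $h(x)$ the left-hand side of (\ref{eq:g2}), a direct evaluation gives $h(1)=c\mu^2>0$, and I would compute $h(1-b_c)$ and $h(1-a_c)$ by invoking the defining equations (\ref{eq:bc}) and (\ref{eq:ac}) together with $\gamma_c=\alpha_c e^{2\mu}$, anticipating that both are strictly negative; the existence of $b^{\text{max}}_c$ and its location beyond $1-a_c$ then follow from the intermediate value theorem. Uniqueness would be obtained by showing that $h$ is monotone (or convex) on $\,]1-b_c,1[\,$: since $\gamma_c<0$, the dominant exponential $\gamma_c\mu^2 e^{4\mu x-4\mu}$ is negative, but the bound on $|\alpha_c|$ provided by (\ref{eq:alpha}) should control it sufficiently to yield $h'>0$ throughout the relevant range. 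This sign/monotonicity analysis of $h$ is the main technical obstacle, because the equation does \emph{not} reduce to (\ref{eq:ac}) under any obvious change of variable, reflecting the genuine asymmetry between the two control problems noted after (\ref{eq:FBP:Wc}). Finally, the critical case $c=c^*(\mu)$ is settled by inspection: by (\ref{eq:ac*:argmax}), $a_{c^*}$ is the unique maximizer of $f^{\mu}-f^{-\mu}$, so by symmetry $1-a_{c^*}$ is the unique minimizer and $f^{\mu}(1-a_{c^*})-f^{-\mu}(1-a_{c^*})=-c^*(\mu)$; hence $f^{a\mu}$ satisfies the full system (\ref{eq:FBP:Wc}) with $a^{\text{max}}_{c^*}=b^{\text{max}}_{c^*}=1-a_{c^*}$, the switching region reducing to this single point.
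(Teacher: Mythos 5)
Your route is the paper's: solve the ODE piecewise, use the boundary and fit conditions to get a four-equation system for $(\gamma_c,\delta_c,a^{\text{max}}_c,b^{\text{max}}_c)$, identify $a^{\text{max}}_c=1-b_c$ and $\gamma_c=\alpha_c e^{2\mu}$ by matching with the minimization problem, and then locate $b^{\text{max}}_c$ by a sign-and-monotonicity analysis of the left-hand side $h$ of (\ref{eq:g2}). Two of your variations are actually cleaner than the paper's: the derivation of $a^{\text{max}}_c=1-b_c$ from the antisymmetry $u\mapsto-u$ between $\tanh(u)=u-c\mu^2$ and $\tanh(t)=t+c\mu^2$ (the paper instead observes that the two fit equations at $a^{\text{max}}_c$ become literally identical to (\ref{eq:resolutionb}) and (\ref{eq:resolutiond}) under the substitution), and the critical case $c=c^*$, which you settle via the antisymmetry of $f^\mu-f^{-\mu}$ about $x=\tfrac12$ where the paper carries out a lengthy direct evaluation.

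The three facts you leave as ``anticipated'' are all true, and establishing them is precisely the technical content of the paper's proof, so your plan does close. Writing $s=2\mu(1-x)$ and letting $\tilde h^{\text{max}}_c(s)$ denote the left side of (\ref{eq:g2}) in this variable: (i) for monotonicity, $h'(x)$ factors as $2\mu(w-1)\bigl(2\gamma_c\mu^2 w+1\bigr)$ with $w=e^{2\mu x-2\mu}$, so its only roots are $x=1$ and the point where $s=\log(-2\gamma_c\mu^2)=\mu-\log\cosh(t_c)$; since $-\log\cosh(t_c)>t_c$ for $t_c<0$, this point lies below $1-b_c$, and $h$ is strictly increasing on all of $\,]1-b_c,1[\,$. (ii) $h(1-b_c)<0$ because, using (\ref{eq:alpha}) and (\ref{eq:t0:sinh:cosh}), $\tilde h^{\text{max}}_c(2\mu b_c)=\mu-\sinh(\mu)/\cosh(t_c)=-\tilde h_c(2\mu b_c)$, which is negative by (\ref{eq:htilde:tc+mu:positive}). (iii) Your suspicion that (\ref{eq:g2}) does not transform into (\ref{eq:ac}) is correct; the paper obtains $h(1-a_c)<0$ by proving the pointwise inequality $\tilde h_c(s)+\tilde h^{\text{max}}_c(s)<0$ on $\,]0,t_c+\mu[\,$, via the factorization $\tilde h_c(s)+\tilde h^{\text{max}}_c(s)=4\sinh^2(s/2)\bigl(1-\cosh(s-\mu)/\cosh(t_c)\bigr)$, which is negative there since $s-\mu<t_c<0$; evaluating at $s=2\mu a_c$ and using $\tilde h_c(2\mu a_c)=0$ gives exactly the sign you need. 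One last small point: for $c=c^*$ your inspection argument yields existence but not uniqueness of the solution of (\ref{eq:FBP:Wc}); uniqueness follows from the same reduction as for $c<c^*$, since the monotonicity in (i) still forces $t_{c^*}+\mu$ to be the unique zero of $\tilde h^{\text{max}}_{c^*}$ on $[0,2\mu b_{c^*}]$.
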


We construct the candidate strategy for the maximization problem as we did to get to~(\ref{eq:def:candidate:Ac}). Let
\begin{align*}
C_1^{\text{max}}&= [0,a^{\text{max}}_c[\,\cup\,]b^{\text{max}}_c,1], &&C_{-1}^{\text{max}}= [0,1-b^{\text{max}}_c[\,\cup\,]1-a^{\text{max}}_c,1],\\
D_1^{\text{max}}&= [a^{\text{max}}_c,b^{\text{max}}_c],              &&D_{-1}^{\text{max}}= [1-b^{\text{max}}_c,1-a^{\text{max}}_c],
\end{align*}
with $a^{\text{max}}_c$ and $b^{\text{max}}_c$ given in Proposition~\ref{prop:sol:FBP:max}. We denote by $G^c$ the strategy constructed using the ideas that led to \eqref{eq:def:candidate:Ac:a} and that satisfies
\begin{equation}\label{eq:def:candidate:Gc}
G^c_t=
\left\{
  \begin{array}{ll}
    G^c_{t-}, & \mbox{ if } X^{G^c}_t \in C^{\text{max}}_{G^c_{t-}}, \\
    -G^c_{t-}, & \mbox{ if } X^{G^c}_t \in D^{\text{max}}_{G^c_{t-}}, \\
  \end{array}
\right.
\end{equation}
where $X^{G^{c}}$ is the process controlled by $G^{c}$. This strategy is pictured in Figure~\ref{figure:controle:AcGc} and it satisfies the following properties.

\begin{figure}[h]
\begin{center}
\includegraphics[width=7cm]{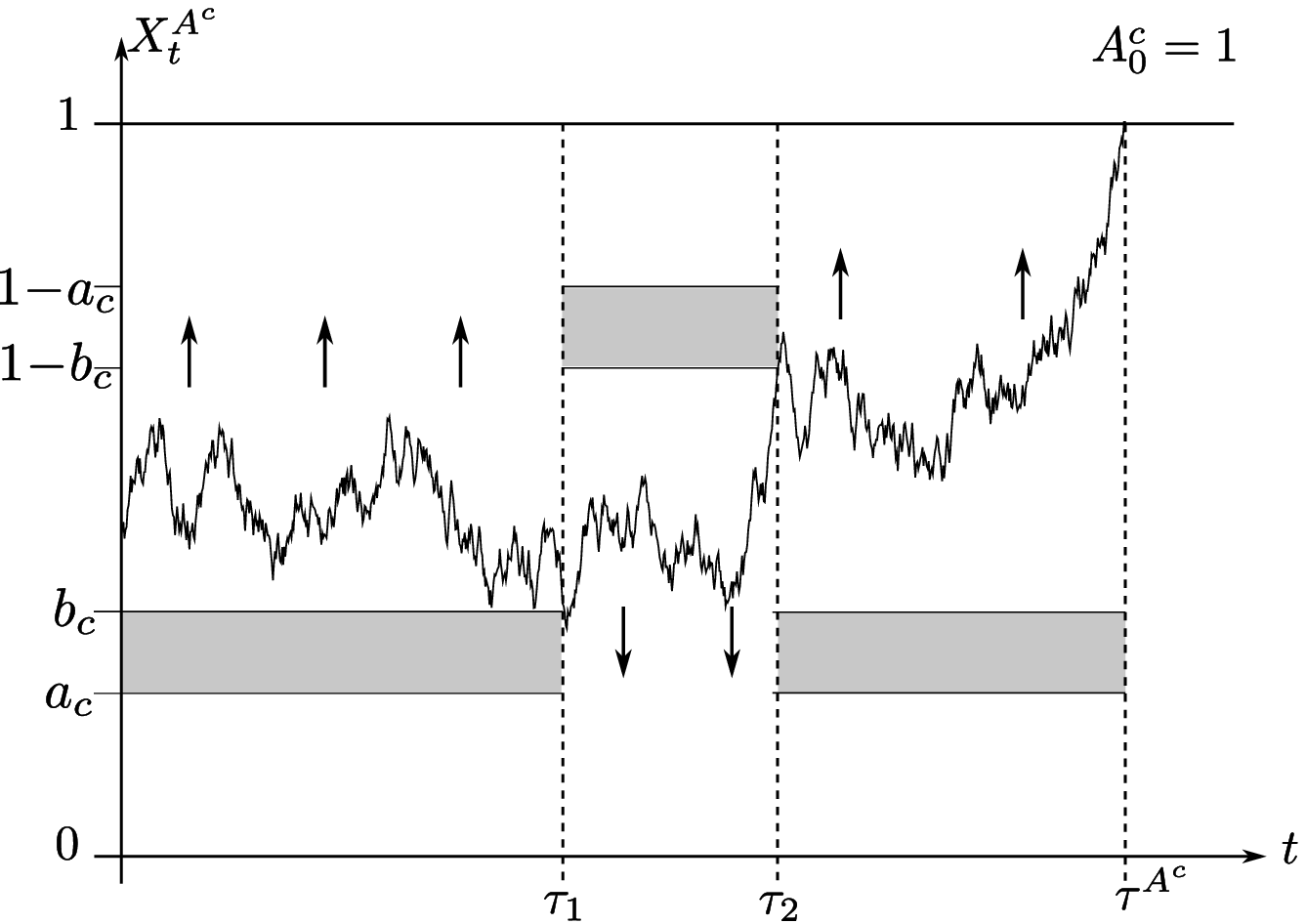}\hspace{5ex}\includegraphics[width=7cm]{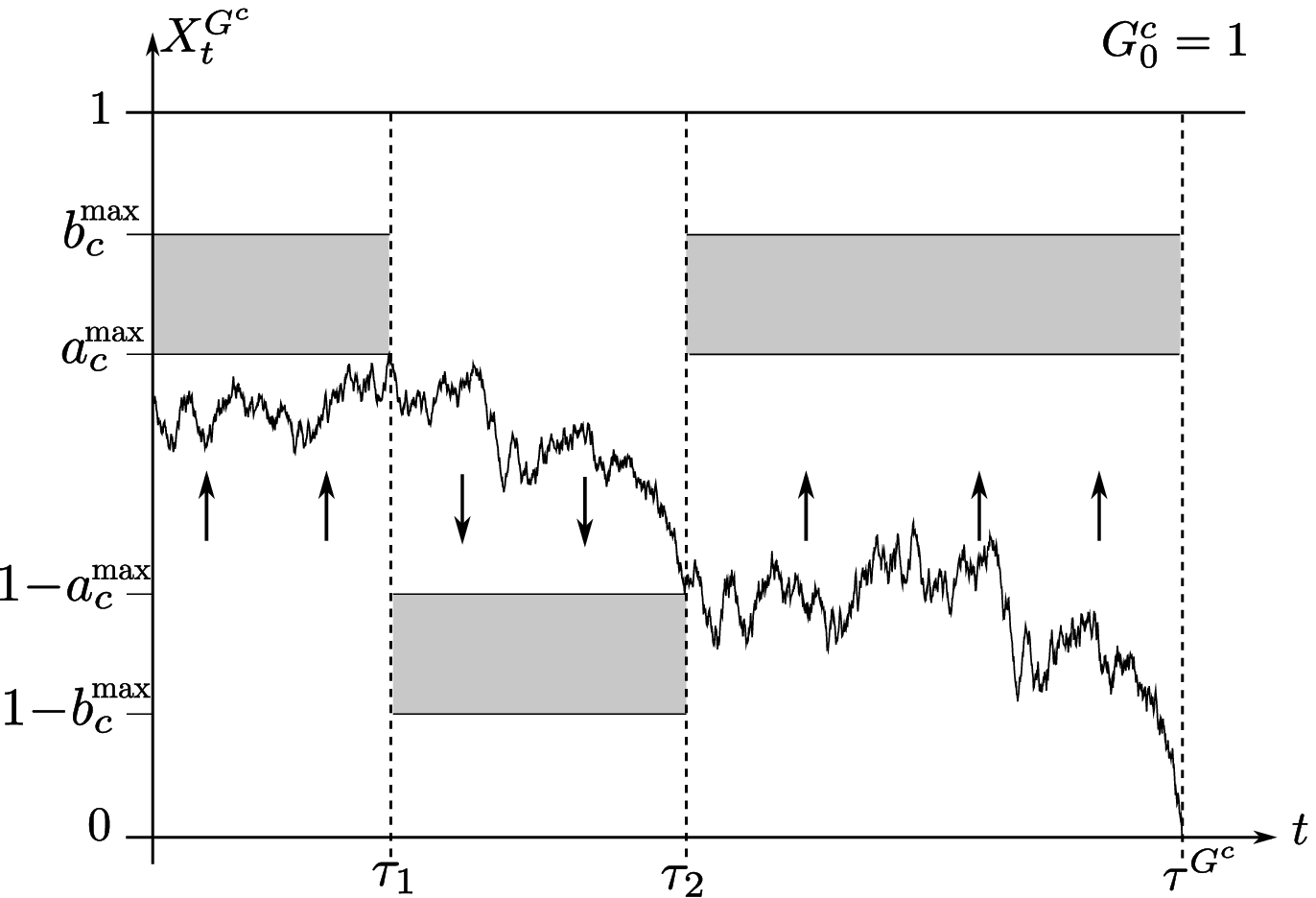}
\end{center}
\caption{Illustration of the control $A^c$ on the left-hand side and of $G^c$ on the right-hand side in the case where $0<c<c^*(\mu)$. Here we assume that the initial drift is positive.}\label{figure:controle:AcGc}
\end{figure}

\begin{Prop}\label{prop_esperance_tauGc}
If $0<c\leq c^*(\mu)$, then for all $x\in[0,1]$ and $a\in\{-1,1\}$, we have
$$\E_{x,a}\left(\tau^{G^c}\right)<+\infty\quad\mbox{ and } \quad \E_{x,a}\left(N_{\tau^{G^c}}(G^c)\right)<+\infty.$$
\end{Prop}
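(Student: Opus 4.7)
The plan is to follow the same scheme used for Proposition~\ref{prop:esperance:tauAc}, adapted to the geometry of the maximization problem, where the switching regions $D_1^{\text{max}}$ and $D_{-1}^{\text{max}}$ lie in the interior of $[0,1]$ on opposite sides of $\frac{1}{2}$. The key structural observation is that, by continuity of $X^{G^c}$, the process enters the switching region of its current drift through the boundary facing its current position. Consequently, for every $n \ge 2$ one has $X_{\tau_n}^{G^c} \in \{a_c^{\text{max}}, 1 - a_c^{\text{max}}\}$, and between consecutive switches the dynamics reduce to an elementary hitting problem for a Brownian motion with constant drift $\pm\mu$ on a sub-interval of $[0,1]$ whose endpoints are bounded away from each other.

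More precisely, starting from $a_c^{\text{max}}$ with drift $-\mu$, the process evolves on $\,]1 - a_c^{\text{max}}, 1[\,$ until it reaches either $1 - a_c^{\text{max}}$ (triggering the next switch) or $1$ (exit of $[0,1]$); the symmetric statement holds from $1 - a_c^{\text{max}}$ with drift $+\mu$ on $\,]0, a_c^{\text{max}}[\,$. Using the classical scale-function formula for Brownian motion with drift (see \cite[II.2.3]{Borodin_salminen02}), I would compute the explicit probability $p$ of the former event and observe that $p < 1$ strictly, because $a_c^{\text{max}} > \frac{1}{2}$; by the symmetry of the setting, the value of $p$ is the same on both sides. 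Hence the number of switches after $\tau_1$ is stochastically dominated by a geometric random variable of parameter $1 - p$, yielding
\[
\E_{x,a}\bigl(N_{\tau^{G^c}}(G^c)\bigr) \le 2 + \frac{1}{1-p} < +\infty.
\]

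The same scale-function computations give a uniform upper bound $T < +\infty$ on the expected duration of each cycle between two consecutive switches after $\tau_1$, while the first cycle $[0, \tau_1]$ has expected duration bounded by a constant $T_0$ depending only on $\mu$ and on the initial state. Conditioning on the sequence of switch times (or invoking a Wald-type identity) then gives
\[
\E_{x,a}(\tau^{G^c}) \le T_0 + T \cdot \E_{x,a}\bigl(N_{\tau^{G^c}}(G^c)\bigr) < +\infty.
\]

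The critical case $c = c^*(\mu)$, where each switching region collapses to a single point ($\{1 - a_c\}$ for drift $+1$ and $\{a_c\}$ for drift $-1$), is handled by exactly the same argument, since these two points remain distinct and the post-switch hitting problems are unchanged. The only point requiring attention is the uniformity of $p$ and $T$ across cycles $n \ge 2$, which holds automatically because every such cycle starts from a deterministic state in $\{a_c^{\text{max}}, 1 - a_c^{\text{max}}\}$ with a deterministic drift; this is where the geometry of the maximization problem (the switching region sitting strictly above $\tfrac{1}{2}$) enters crucially, to guarantee $p < 1$.
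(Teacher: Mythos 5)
Your argument is correct and is essentially the paper's own proof: the paper simply states that the proof is the same as that of Proposition~\ref{prop:esperance:tauAc}, i.e.\ the renewal structure in which, after the first switch, the process restarts from the deterministic states $a^{\text{max}}_c$ and $1-a^{\text{max}}_c$, the number of switches is (conditionally) geometric, and the expected leg durations are uniformly bounded. Your explicit adaptation of the geometry (switches alternating between $a^{\text{max}}_c$ and $1-a^{\text{max}}_c$, with $p<1$ precisely because $a^{\text{max}}_c\in\,]\tfrac12,1[$) is exactly what that reference to the earlier proof requires.
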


\vskip 12pt

\begin{Thm}\label{verification:thm:max}
Let $c^*(\mu)$ be given by~(\ref{eq:def:c*}).  If $G^c$ is the control satisfying~(\ref{eq:def:candidate:Gc}) and if $\bar{V}^{\text{max}}_c$ denotes the unique solution of the free boundary problem~(\ref{eq:FBP:Wc}) given in Proposition~\ref{prop:sol:FBP:max}, then:
\begin{enumerate}
  \item for $0<c\leq c^*(\mu)$, the value function is $V^{\text{max}}_c = \bar{V}^{\text{max}}_c$ and $G^c$  is an optimal control for the problem~(\ref{eq:control:problem:max});

  \item for $c>c^*(\mu)$, the value function is $V^{\text{max}}_c = \bar{V}^{\text{max}}_{c^*(\mu)}$, and $\tilde{A}=(\tilde{A}_t\equiv a)_{t\geq0}$ is $\Prob_{x,a}$-a.s.~the unique optimal control for the problem~(\ref{eq:control:problem:max}).
\end{enumerate}
\end{Thm}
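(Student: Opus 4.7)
The proof runs in parallel with the verification argument for Theorem~\ref{verification:thm:min}, so I sketch the plan and highlight the modifications specific to the maximization problem. For $0 < c \leq c^*(\mu)$, the backbone is to build from $\bar{V}^{\text{max}}_c$ a process that is a supermartingale under every admissible strategy and a true martingale under $G^c$, then to apply optional stopping.

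The first step is to verify that the explicit $\bar{V}^{\text{max}}_c$ of Proposition~\ref{prop:sol:FBP:max} satisfies the two pointwise variational inequalities
\[
1 + \mu\,\partial_x \bar{V}^{\text{max}}_c(x,1) + \tfrac{1}{2}\,\partial_{xx} \bar{V}^{\text{max}}_c(x,1) \leq 0, \qquad \bar{V}^{\text{max}}_c(x,-1) - \bar{V}^{\text{max}}_c(x,1) - c \leq 0,
\]
for every $x\in\,]0,1[\,\setminus\{a^{\text{max}}_c,b^{\text{max}}_c\}$, with equality in the first on the continuation region $C^{\text{max}}_1$ (by~\eqref{eq:continuation:max}) and in the second on the switching region $D^{\text{max}}_1$ (by~\eqref{eq:switching:max}). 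The non-trivial statements are the first inequality on $D^{\text{max}}_1$ and the second on $C^{\text{max}}_1$; both follow from the closed-form expressions~\eqref{eq:sol:FBP:max:c<c*}, with the defining relations~\eqref{eq:g2} and~\eqref{eq:delta} of $b^{\text{max}}_c$ and $\delta_c$ guaranteeing the required monotonicity of the underlying elementary functions. The analogous inequalities for $(x,-1)$ follow by the $x \mapsto 1-x$ substitution via the symmetry~\eqref{eq:symmetry:max}.

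With these in hand, I would apply the local time-space formula of~\cite{Peskir07} to $\bar{V}^{\text{max}}_c(X^A_t, A_t)$ for an arbitrary $A\in\mathcal{A}$. The smooth-fit conditions~\eqref{eq:smooth:fit1:max}--\eqref{eq:smooth:fit2:max} make the first derivative continuous at $a^{\text{max}}_c$ and $b^{\text{max}}_c$, so the local time terms in Peskir's formula vanish; only the It\^o drift and the jumps of $\bar{V}^{\text{max}}_c$ at the switch times of $A$ (including the possible jump at $t=0$ if $A_0\neq a$) remain. The first variational inequality bounds the It\^o drift of $\bar{V}^{\text{max}}_c(X^A_t, A_t) + t$ from above by zero; the second bounds each jump $\bar{V}^{\text{max}}_c(X^A_t, -A_{t-}) - \bar{V}^{\text{max}}_c(X^A_t, A_{t-})$ by $c$, which is exactly the increment of $cN_t(A)$. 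Hence
\[
M^A_t := \bar{V}^{\text{max}}_c(X^A_t, A_t) + t - c\,N_t(A)
\]
is a local supermartingale under $\Prob_{x,a}$. Combining the uniform bound $\sup_{A\in\mathcal{A}}\E_{x,a}(\tau^A)<\infty$, which follows from a standard comparison argument for bounded-drift diffusions on $[0,1]$, with the nonnegativity and boundedness of $\bar{V}^{\text{max}}_c$, a localization-plus-Fatou step yields $\bar{V}^{\text{max}}_c(x,a) \geq J^{\text{max}}_c(x,a,A)$. For $A=G^c$, the construction of $C^{\text{max}}_{\pm1}$ and $D^{\text{max}}_{\pm1}$ forces equality in both variational inequalities at each time, so $M^{G^c}$ is a true martingale; Proposition~\ref{prop_esperance_tauGc} then supplies the integrability needed to conclude $\bar{V}^{\text{max}}_c(x,a) = J^{\text{max}}_c(x,a,G^c)$, completing part~(1).

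Part~(2) is obtained by running the same argument with $\bar{V}^{\text{max}}_{c^*(\mu)}(x,a)=f^{a\mu}(x)$ in place of $\bar{V}^{\text{max}}_c$. Since $f^{\pm\mu}$ satisfies the first variational inequality as an equality on all of $]0,1[$ (it is the expected exit time of the constant-drift diffusion), and since $c > c^*(\mu) = \max_y(f^{-a\mu}(y)-f^{a\mu}(y))$ makes the second inequality strict with uniform gap at least $c - c^*(\mu)>0$, each switch of any strategy $A$ strictly decreases $\E[M^A]$ by at least $c-c^*(\mu)$. Taking expectation at $\tau^A$ yields $J^{\text{max}}_c(x,a,A) \leq f^{a\mu}(x) - (c-c^*(\mu))\,\E_{x,a}(N_{\tau^A}(A))$, with equality iff $N_{\tau^A}(A)=0$ $\Prob_{x,a}$-a.s., i.e., iff $A = \tilde{A}$. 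The principal obstacle throughout is the rigorous use of Peskir's formula in the combined presence of a $C^1$-but-not-$C^2$ integrand and a piecewise-constant discontinuous control; the smooth-fit conditions are tailored precisely to kill the local time contributions and the switching inequality aligns the $\bar{V}^{\text{max}}_c$-jumps with the jumps of $cN_t(A)$, but the bookkeeping, together with the integrability upgrade from local to true (super)martingale --- handled by Proposition~\ref{prop_esperance_tauGc} for $G^c$ and by the uniform exit-time bound for arbitrary $A$ --- must be executed with care.
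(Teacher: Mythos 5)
Your plan is essentially the paper's proof. The paper reduces Theorem~\ref{verification:thm:max} to the martingale/supermartingale dichotomy of Proposition~\ref{prop:martingale:max}, which is built exactly as you describe from the local time-space formula, the smooth-fit cancellation of the local-time terms, and the two variational inequalities; your process $M^A$ is the paper's $K^{c,A}$, and the passage to the limit uses Proposition~\ref{prop_esperance_tauGc} as you indicate. Two remarks. First, essentially all of the computational content lies in the two inequalities you dispatch in one sentence as following from ``the required monotonicity of the underlying elementary functions'': these are Lemmas~\ref{lem:generator:Wbar} and~\ref{lem:ineq:Wbar}, and the second requires a four-case analysis with a further trichotomy on the sign of $\delta_c$, while the first rests on the identity $1+2\mu^2\gamma_c e^{-2\mu(1-x)}<\tanh(t_c)<0$ on the switching region; a bare appeal to monotonicity does not suffice, so this is the part of your outline that still has to be executed in full. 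Second, your treatment of part~(2) genuinely differs from the paper's: you rerun the verification with $f^{a\mu}$ at cost $c>c^*(\mu)$ and obtain the quantitative bound $J^{\text{max}}_c(x,a,A)\leq f^{a\mu}(x)-(c-c^*(\mu))\,\E_{x,a}(N_{\tau^A}(A))$, which delivers the value function and the a.s.\ uniqueness of $\tilde A$ in one stroke; the paper instead sandwiches $V^{\text{max}}_c$ between $V^{\text{max}}_{c^*}$ (monotonicity of the payoff in $c$) and $J^{\text{max}}_c(x,a,\tilde A)=J^{\text{max}}_{c^*}(x,a,\tilde A)$, reusing part~(1) at $c=c^*(\mu)$, and argues uniqueness separately. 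Both are correct; yours is more self-contained. Your explicit remark that one needs $\E_{x,a}(\tau^A)<+\infty$ uniformly (or at least for every competing strategy) to justify the limiting step is a point the paper passes over in silence and is worth retaining.
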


\section{Proofs}\label{sec:proofs}
In this section, we solve the boundary problems \eqref{eq:FBP:Vc} and \eqref{eq:FBP:Wc} and prove that the solution of each is the value function of the associated control problem. We begin with the optimal expulsion problem.

\subsection{Free boundary problem for the minimization problem}

The general solution to the o.d.e.~(\ref{eq:ODE:continuation}) is $v(x)=-\frac{x}{\mu}+c_1e^{-2\mu x}+c_2$ where $c_1$ and $c_2$ are arbitrary constants. Since~(\ref{eq:ODE:continuation}) is satisfied on the two disjoint intervals $[0,a_c[$ and $]b_c,1]$, this yields four arbitrary constants to determine. The boundary conditions~(\ref{eq:boundary:cond}) reduce this to two unknown constants (see \cite[Ch.2]{Vinck_PhD} for details). The value of $\bar{V}_c(x,1)$ for $x\in[a_c,b_c]$ is obtained by using~(\ref{eq:switching}) together with~(\ref{eq:symmetry}). We then find that $\bar{V}_c$ must satisfy
\begin{equation}\label{eq:sol:FBP:min:bis}
\bar{V}_c(x,1)=\left\{
           \begin{array}{ll}
            \displaystyle   -\tfrac{x}{\mu}+\beta_c \left(e^{-2\mu x}-1\right), & x\in[0,a_c[\,, \\
            \displaystyle   \tfrac{x}{\mu}+\alpha_c \left(e^{2\mu x}-1\right)+ c, & x\in[a_c,b_c], \\
            \displaystyle   \tfrac{1-x}{\mu}+\alpha_c \left(e^{2\mu(1-x)}-1\right), & x\in\,]b_c,1], \\
           \end{array}
         \right.
\end{equation}
where $\alpha_c, \beta_c, a_c$ and $b_c$ are four unknowns that we have to determine using the equations~(\ref{eq:fit1})--(\ref{eq:smooth:fit2}). They give us after some simplifications, in the same order:
\begin{align}
\tfrac{-2a_c}{\mu}+ \beta_c \left(e^{-2\mu a_c}-1\right)&=\alpha_c \left(e^{2\mu a_c}-1\right)+ c,      \label{eq:resolutiona}\\
\alpha_c \left(e^{2\mu b_c}-1\right)+ c&=\tfrac{1-2b_c}{\mu}+ \alpha_c \left(e^{2\mu(1-b_c)}-1\right),  \label{eq:resolutionb}\\
2 \mu\alpha_c e^{2\mu a_c}+2\beta_c\mu e^{-2\mu a_c} + \tfrac{2}{\mu}&=0,                               \label{eq:resolutionc}\\
2 \mu\alpha_c \left(e^{2\mu b_c} + e^{2\mu(1-b_c)}\right)+  \tfrac{2}{\mu}&=0.                        \label{eq:resolutiond}
\end{align}
Multiply~(\ref{eq:resolutiona}) by $e^{2\mu a_c}$, (\ref{eq:resolutionb}) by $e^{2\mu b_c}$, (\ref{eq:resolutionc}) by $\frac{1}{2\mu}\,e^{2\mu a_c}$ and (\ref{eq:resolutiond}) by $\frac{1}{2\mu}\,e^{2\mu b_c}$, to obtain, after simplifications, respectively the four equations
\begin{align}
\alpha_c e^{4\mu a_c} +\left( \beta_c-\alpha_c+\tfrac{2a_c}{\mu}+c\right) e^{2\mu a_c}-\beta_c&=0,          \label{eq:resolutiona:bis}\\
\alpha_c e^{4\mu b_c}+\left(\tfrac{2b_c-1}{\mu}+c\right) e^{2\mu b_c}-\alpha_c e^{2\mu}&=0,             \label{eq:resolutionb:bis}\\
\alpha_c e^{4\mu a_c} + \tfrac{e^{2\mu a_c}}{\mu^2}+\beta_c&=0,                                          \label{eq:resolutionc:bis}\\
\alpha_c e^{4\mu b_c} + \tfrac{e^{2\mu b_c}}{\mu^2}+\alpha_c e^{2\mu}&=0.                                \label{eq:resolutiond:bis}
\end{align}
Subtract~(\ref{eq:resolutionb:bis}) from~(\ref{eq:resolutiond:bis}), and solve this equation for $\alpha_c$, then insert this expression into the sum of~(\ref{eq:resolutionb:bis}) and~(\ref{eq:resolutiond:bis}), to get a new equation~(\ref{eq:resolution:bc}) for $b_c$. Solve~(\ref{eq:resolutionc:bis}) for $\beta_c$, and plug this value into~(\ref{eq:resolutiona:bis}) to get a new equation~(\ref{eq:resolution:ac}) for $a_c$. These equations are
\begin{align}
e^{2 \mu(2 b_c-1)}(\mu(2 b_c-1)+c\mu^2-1)+\mu(2 b_c-1)+c\mu^2+1&=0,                          \label{eq:resolution:bc}\\
\mu^2\alpha_c e^{4\mu a_c} +(1-2\mu^2\alpha_c)e^{2\mu a_c}-2\mu a_c+\mu^2 \alpha_c-c\mu^2-1&=0,    \label{eq:resolution:ac}
\end{align}
and the formulas for $\alpha_c$ and $\beta_c$ are
\begin{equation*}
\alpha_c  =e^{2\mu b_c}\left(\tfrac{2b_c-1}{\mu}+c-\tfrac{1}{\mu^2}\right)\tfrac{1}{2}e^{-2\mu},    \qquad
\beta_c   =-\alpha_c e^{4\mu a_c}-\tfrac{e^{2\mu a_c}}{\mu^2}.                                         
\end{equation*}
Equations~(\ref{eq:resolution:bc}) for $b_c$ and~(\ref{eq:resolution:ac}) for $a_c$ are transcendental. We define
\begin{align}
h_c(t)          &=e^{2 t} ( t+c\mu^2-1) + t+c\mu^2+1,\label{eq:def:hct}\\
\tilde{h}_c(s)  &=\mu^2\alpha_c e^{2 s}+ (1-2\mu^2\alpha_c)e^s -s +\mu^2\alpha_c -c\mu^2-1,\label{eq:def:hctildes}
\end{align}
so that $b_c$ and $a_c$ are respectively  solutions of $h_c(\mu(2 b_c-1))=0$ and  $\tilde{h}_c(2\mu a_c)=0$. Setting $t_c=\mu(2 b_c-1)$ and $s_c=2\mu a_c$, we find that solving the system~(\ref{eq:resolutiona})--(\ref{eq:resolutiond}) with $0<a_c\leq b_c<\frac{1}{2}$ is equivalent to solving
\begin{align}
h_c(t_c)        &=0,\label{eq:hct=0}\\
\tilde{h}_c(s_c)&=0 ,\label{eq:hctildes=0}\\
\alpha_c          &=(t_c+c\mu^2-1)\tfrac{1}{2\mu^2}\,e^{t_c-\mu},\label{eq:alpha:2}\\
\beta_c           &=-\alpha_c e^{2s_c}-\tfrac{1}{\mu^2}\, e^{s_c},\label{eq:beta:2}
\end{align}
with $-\mu<t_c<0$ and $0<s_c \leq t_c+\mu$.

\begin{Rem}\label{rem:discussion:h:htilde}
By computing two derivatives, we see that $h_c^{\prime\prime}(t)>0$, for all $t\in\R$, $\lim_{t\downarrow-\infty}h_c^\prime(t)=1$, therefore $h_c^\prime(t)>0$, for all $t\in\R$, therefore $h_c$ is strictly increasing with $h_c(-c\mu^2-1)=-2e^{-2(c\mu^2+1)}<0$ and $h_c(-c\mu^2)=-e^{-2c\mu^2}+1>0$, so that~(\ref{eq:hct=0}) admits a unique solution $t_c\in\,]-c\mu^2-1,-c\mu^2[$. On the other hand, $\tilde{h}_c(0)=-c\mu^2<0$ and we see by direct calculation that $\tilde{h}_c^\prime(s)=0$ if and only if $s\in \left\{0, \log\left(\frac{-1}{2\mu^2\alpha_c}\right)\right\}$. We set
\begin{equation}\label{eq:def:mtilde}
\tilde{m}_c=\log\left(\frac{-1}{2\mu^2\alpha_c}\right)
\end{equation}
and depending on the sign of this value and of $\tilde{h}_c\left(\tilde{m}_c\right)$,~(\ref{eq:hctildes=0}) has up to three solutions. This will be discussed later on a case by case basis.
\end{Rem}

\begin{proof}[Proof of Proposition~\ref{prop:sol:FBP:min}]
Let us start with the case where $c=c^* := c^*(\mu)$. In this case, the unique solution of~(\ref{eq:hct=0}) is given by
\begin{equation}\label{eq_def_tc*_1}
t_{c^*}= -c^*\mu^2 -\sqrt{1-\tfrac{\mu^2}{\sinh^2(\mu)}}.
\end{equation}
Indeed, by \eqref{eq:def:c*}, \eqref{eq_def_tc*_1} and direct computations,
\begin{align*}
h_{c^*}(t_{c^*})&= e^{2 t_{c^*}} ( t_{c^*}+c^*\mu^2-1) + t_{c^*}+c^*\mu^2+1 \\
&= \left(\tfrac{\sinh(\mu)}{\mu}\left(1-\sqrt{1-\tfrac{\mu^2}{\sinh^2(\mu)}}\right) \right)^2\left( -\sqrt{1-\tfrac{\mu^2}{\sinh^2(\mu)}}-1\right) -\sqrt{1-\tfrac{\mu^2}{\sinh^2(\mu)}}+1\\
&=0.
\end{align*}
Using the formula for $c^*$ in~(\ref{eq:def:c*}), we can write
\begin{equation}\label{eq_def_tc*_2}
t_{c^*}=\log\left(\tfrac{\sinh(\mu)}{\mu}\left(1-\sqrt{1-\tfrac{\mu^2}{\sinh^2(\mu)}}\right) \right).
\end{equation}
Moreover, $0>t_{c^*}>-\mu$. Indeed, the first inequality follows from the fact that $\sinh \mu>\mu$ and the second one is equivalent to
$\tfrac{\sinh(\mu)}{\mu}\left(1-\sqrt{1-\tfrac{\mu^2}{\sinh^2(\mu)}}\right)>e^{-\mu}$, which, in turn, is equivalent to
$e^{2\mu}(\mu-1)+\mu+1>0$; this last inequality is satisfied for all $\mu>0$. Plugging into~(\ref{eq:alpha:2}) the value of $t_{c^*}$ given by~(\ref{eq_def_tc*_1}), or by~(\ref{eq_def_tc*_2}) when it appears in an exponential, we obtain
\begin{equation}\label{eq:alpha*}
\alpha_{c^*}=\frac{-e^{-\mu}}{2\mu\sinh(\mu)}.
\end{equation}
We now observe that the unique solution $s_{c^*}$ of~(\ref{eq:hctildes=0}) such that $0<s_{c^*}\leq t_{c^*}+\mu$ is given by
\begin{equation}\label{eq:s0*}
s_{c^*}= t_{c^*}+\mu.
\end{equation}
Indeed, by definition of $t_{c^*}$ (see~(\ref{eq_def_tc*_1}) or~(\ref{eq_def_tc*_2})) and of $\alpha_{c^*}$ in~(\ref{eq:alpha*}), we have
\begin{align*}
\tilde{h}_{c^*}(t_{c^*}+\mu)&=\mu^2\alpha_{c^*} e^{2 (t_{c^*}+\mu)}+ (1-2\mu^2\alpha_{c^*})e^{t_{c^*}+\mu}-(t_{c^*}+\mu) +\mu^2\alpha_{c^*}  -c^*\mu^2-1\\
&=\mu^2\alpha_{c^*} \left(e^{t_{c^*}+\mu}-1\right)^2+ e^{t_{c^*}+\mu}  -(t_{c^*}+c^*\mu^2) -\mu-1\\
&=\tfrac{-\mu e^{-\mu}}{2\sinh(\mu)} \left[e^{\mu}\left(\tfrac{\sinh(\mu)}{\mu}\left(1-\sqrt{1-\tfrac{\mu^2}{\sinh^2(\mu)}}\right) \right)-1\right]^2\\
&\hspace{4ex} +e^{\mu}\left[\tfrac{\sinh(\mu)}{\mu}\left(1-\sqrt{1-\tfrac{\mu^2}{\sinh^2(\mu)}}\right) \right] +\sqrt{1-\tfrac{\mu^2}{\sinh^2(\mu)}} -\mu-1\\
&=0.
\end{align*}
The uniqueness on the interval $]0,t_{c^*}+\mu]$ is obtained by considering the position of the extrema of $\tilde{h}_{c^*}$. Indeed, setting
$$\tilde{m}_{c^*}=\log\left(\frac{-1}{2\mu^2\alpha_{c^*}}\right)=\log\left(e^\mu\,\tfrac{\sinh(\mu)}{\mu}\right)=\mu+\log\left(\tfrac{\sinh(\mu)}{\mu}\right)$$
and since $\sinh(\mu)/\mu>1$ for all $\mu>0$, we have that $\tilde{m}_{c^*}>\mu>t_{c^*}+\mu>0$. Thus, the function $\tilde{h}_{c^*}(s)$ vanishes three times: the first time on the interval $]-\infty, 0[\,$, then it reaches at $0$ a local minimum $\tilde{h}_{c^*}(0)=-c^*\mu^2$, then it increases, vanishes at $t_{c^*}+\mu<\tilde{m}_{c^*}$ and keeps increasing until $\tilde{m}_{c^*}$ where it reaches a local maximum. Therefore $\tilde{h}_c^*(\tilde{m}_{c^*})>0$. Finally, $\tilde{h}_{c^*}(s)$ vanishes a third time on the interval $]\tilde{m}_{c^*},+\infty[$.

It remains to determine the parameter $\beta_{c^*}$ which, by~(\ref{eq:beta:2}), (\ref{eq:s0*}), (\ref{eq_def_tc*_2}) and~(\ref{eq:alpha*}), is given by
\begin{equation}\label{eq:def:beta*}
\beta_{c^*}=\tfrac{-e^{\mu}}{2\mu\sinh(\mu)}.
\end{equation}
We have therefore solved the system~(\ref{eq:hct=0})--(\ref{eq:beta:2}) and we have found that
\begin{equation}\label{eq:ac*=bc*}
b_{c^*}=\frac{1}{2}\left(\frac{t_{c^*}}{\mu}+1\right)\in\,\left]0,\tfrac{1}{2}\right[ \quad\text{ and }\quad a_{c^*}=\frac{s_{c^*}}{2\mu}=\frac{t_{c^*}+\mu}{2\mu}=b_{c^*},
\end{equation}
with $t_{c^*}$ given by~(\ref{eq_def_tc*_1}). This establishes the existence and uniqueness of the solution $\bar{V}_{c^*}$ of the free boundary problem~(\ref{eq:FBP:Vc}). This solution is given by
\begin{align}
\bar{V}_{c^*}(x,1)&=\left\{
           \begin{array}{ll}
           \displaystyle  \tfrac{-x}{\mu}+\beta_{c^*} \left(e^{-2\mu x}-1\right), & x\in[0,a_{c^*}], \\
           \displaystyle  \tfrac{1-x}{\mu}+\alpha_{c^*} \left(e^{2\mu(1-x)}-1\right), & x\in[a_{c^*},1], \\
           \end{array}
         \right. \nonumber\\
&=\left\{
           \begin{array}{ll}
            \displaystyle \tfrac{-x}{\mu}+\tfrac{1-e^{-2\mu x}}{\mu(1-e^{-2\mu})}\,, &\qquad\qquad x\in[0,a_{c^*}], \\
           \displaystyle  \tfrac{-x}{\mu}+\tfrac{1-e^{-2\mu x}}{\mu(1-e^{-2\mu})}\,, &\qquad\qquad x\in[a_{c^*},1], \\
           \end{array}
         \right.\label{eq:value:barVc*} \\
&=f^\mu(x)=\E_x\left(\sigma^\mu\right), \quad\qquad\qquad\; x\in[0,1],\nonumber
\end{align}
and $\bar{V}_{c^*}(x,-1)=\bar{V}_{c^*}(1-x,1)=f^{-\mu}(x)$. By definition of $\tilde{A}$, we have $f^{a\mu}=J_{c^*}(x,a,\tilde{A})$ where $\tilde{A}$ is the constant strategy $(\tilde A_t \equiv a)$. Finally, we find using~(\ref{eq:ac*=bc*}) and~(\ref{eq_def_tc*_2}) that
$$a_{c^*}=b_{c^*}=\frac{1}{2\mu}\left(\log\left(\tfrac{\sinh(\mu)}{\mu}\left(1-\sqrt{1-\tfrac{\mu^2}{\sinh^2(\mu)}}\right)\right)+\mu\right),$$
which, according to~(\ref{eq:maximum:diff:fnu}), is the location of the global maximum of $f^\mu(x)-f^{-\mu}(x)$ on the interval $[0,1]$. Therefore, by definition of $c^*(\mu)$, we have $f^\mu(a_{c^*})=f^{-\mu}(a_{c^*})+c^*(\mu)$, and this completes the proof of the first part of Proposition~\ref{prop:sol:FBP:min}.

Let us now consider $0<c<c^*(\mu)$. The form of the solution $\bar{V}_c$ of the free boundary problem~(\ref{eq:FBP:Vc}) given in~(\ref{eq:sol:FBP:min:c<c*}) has already been discussed starting with~(\ref{eq:sol:FBP:min:bis}) and reduced to the resolution of the equivalent system~(\ref{eq:hct=0})--(\ref{eq:beta:2}) with $-\mu<t_c<0$ and $0<s_c<t_c+\mu$.

We start by showing that the unique solution $t_c$ of~(\ref{eq:hct=0}) mentioned in Remark~\ref{rem:discussion:h:htilde} is such that $-\mu<t_{c^*}<t_c<-c\mu^2$. The first inequality is mentioned just after~(\ref{eq_def_tc*_2}) and the last inequality, as well as the uniqueness of the solution, has been discussed in Remark~\ref{rem:discussion:h:htilde}. Observe that by definition, $t\mapsto h_c(t)$ and $c\mapsto h_c(t)$ are both strictly increasing. Thus, $h_c(t_{c^*})<h_{c^*}(t_{c^*})=0$ and since, by definition, $h_c(t_c)=0$, we obtain the last inequality $t_{c^*}<t_c$ and this establishes~(\ref{eq:bc}) of Proposition~\ref{prop:sol:FBP:min}.

We now establish~(\ref{eq:alpha}). Notice that by definition of $t_c$, we have $e^{2 t_c} ( t_c+c\mu^2-1) + t_c+c\mu^2+1=0$, which is equivalent to
\begin{equation}\label{eq:t0:sinh:cosh}
t_c + c\mu^2=\frac{-1+e^{2 t_c}}{1+e^{2 t_c}}=\frac{\sinh(t_c)}{\cosh(t_c)}
\end{equation}
and which yields~(\ref{eq:alpha}). Indeed, by~(\ref{eq:alpha:2}),
\begin{equation}\label{eq:alpha:bis}
\alpha_c=(t_c+c\mu^2-1)\frac{1}{2\mu^2}e^{t_c-\mu}=\frac{-e^{-\mu}}{2\mu^2}\frac{1}{\cosh(t_c)}=\frac{-e^{-\mu}}{2\mu^2\cosh(2\mu b_c-\mu)}\,.
\end{equation}

In order to prove~(\ref{eq:ac}) or, equivalently, to show that there exists a unique solution $s_c$ of~(\ref{eq:hctildes=0}) such that $0<s_c<t_c+\mu$, we need to study the function
$\tilde{h}_c$ and more particularly the position of its local maximum $\tilde{h}_c(\tilde{m}_c)$ (see Remark~\ref{rem:discussion:h:htilde}). Observe that by~(\ref{eq:def:mtilde}) and~(\ref{eq:alpha:bis}),
$$\tilde{m}_c=\log\left(\frac{-1}{2\mu^2\alpha_c}\right) = \mu +\log\left(\cosh(t_c)\right)>\mu.$$
As $t_c<0$, we have
\begin{equation}\label{eq_inequ_2mubc_mtilde}
0<2\mu b_c=t_c+\mu < \tilde{m}_c.
\end{equation}
Plugging $s=2\mu b_c=t_c+\mu$ into~(\ref{eq:def:hctildes}), we see that
$$\tilde{h}_c(t_c+\mu)=\mu^2\alpha_c\left(e^{t_c+\mu}-1\right)^2+e^{t_c+\mu}-\left(t_c+c\mu^2\right)-\mu-1.$$
Since $\mu^2\alpha_c=-\frac{e^{-\mu}}{e^{t_c}+e^{-t_c}}$ by~(\ref{eq:alpha}), and expressing $t_c+c\mu^2$ using~(\ref{eq:t0:sinh:cosh}), we obtain
\begin{align*}
\tilde{h}_c(t_c+\mu)=\frac{-e^{-\mu}}{e^{t_c}+e^{-t_c}}\Big[ &\left(e^{t_c+\mu}-1\right)^2 - e^{t_c+2\mu}\left(e^{t_c}+e^{-t_c}\right)  + e^{t_c+\mu}-e^{-t_c+\mu} \\
                                                             &\quad+ (\mu+1)\left(e^{t_c+\mu} + e^{-t_c+\mu}\right)  \Big],
\end{align*}
and this simplifies to
\begin{equation}\label{eq:htilde:tc+mu}
\tilde{h}_c(t_c+\mu)= -\mu +\frac{\sinh(\mu)}{\cosh(t_c)}\,.
\end{equation}
Define
$$k(t)=-\mu +\frac{\sinh(\mu)}{\cosh(t)}\,.$$
Clearly, this function has two zeros $\tilde{t}<0$ and $-\tilde{t}>0$. From~(\ref{eq:htilde:tc+mu}),
$k(t_{c^*})=\tilde{h}_{c^*}(t_{c^*}+\mu)=0$
by~(\ref{eq:s0*}). Therefore, $\tilde{t}=t_{c^*}$ and, since $t_{c^*}<t_c<0$, we conclude that $k(t_c)>0$, which implies that
\begin{equation}\label{eq:htilde:tc+mu:positive}
\tilde{h}_c(2\mu b_c)=\tilde{h}_c(t_c+\mu)=k(t_c)>0.
\end{equation}
Since $\tilde{h}_c$ is monotone on $[0,\tilde{m}_c]$ and $\tilde{h}_c(0)=-c\mu^2<0$,  we conclude that $0<s_c<t_c+\mu$, which establishes~(\ref{eq:ac}).
Finally,~(\ref{eq:beta}) is simply a rewriting of~(\ref{eq:beta:2}).
\end{proof}

\subsection{Proof of optimality for the minimization problem}
We now aim to prove that the solution $\bar{V}_c$ of \eqref{eq:FBP:Vc} given in Proposition \ref{prop:sol:FBP:min} is indeed the value function. For this, we could apply the Verification Theorem 6.2 of~\cite{OksendalSulemBook}. However, we prefer to give a direct proof, in which the main steps correspond to checking assumptions of the Verification Theorem. Indeed, Lemma~\ref{lem:generator:Vbar} below corresponds to verifying hypotheses (vi) and (x) of \cite[Theorem 6.2]{OksendalSulemBook}, and Corollary \ref{cor:DeltaH} below corresponds to hypothesis (ii) there. However, by detailing the proof, we can identify where any deviation from the strategy $A^c$ leads to a suboptimal cost function, and this will be useful in our result on uniqueness (Proposition \ref{prop:uniqueness} in Section \ref{sec:further}).

Before proving Proposition~\ref{prop:esperance:tauAc}, we introduce some notations. Let $(\tau_n)$ be the sequence of switching times of $A^c$ and let $\tau^{A^c}$ be the exit time of $X^{A^c}$ from $[0,1]$. For a Brownian motion $B$ starting a.s.~at $x$ and for $a<x<b$, let
\begin{equation}\label{eq:def:pxab}
p^{\pm}_x(a,b)=\Prob_x\left\{\mbox{$B_t\pm\mu t$ hits $a$ before $b$}\right\}.
\end{equation}
These quantities satisfy the translation invariance property $p^{\pm}_x(a,b)=p^{\pm}_{x+h}(a+h,b+h)$ for all $h\in\R$ and the symmetry property $p^-_x(a,b)=p^+_{-x}(-a,-b)$. We also define
\begin{align}
\sigma^{\pm}_{a,b}   &=\inf\{t\geq0: B_t\pm\mu t\notin\,]a,b[\},\nonumber\\
E_x(y,a,b)              &=\E\left(\sigma^+_{a,b}\left\vert\, B_{\sigma^+_{a,b}}+\mu \sigma^+_{a,b}=y, B_0=x\right.\right),\qquad y\in\{a,b\}.\label{eq:def:Eyab}
\end{align}
These expectations and probabilities can all be explicitly computed (see e.g.~\cite[II.2.3]{Borodin_salminen02}). In particular, the expectations are finite and $0< p^{\pm}_x(a,b) <1$.

\begin{proof}[Proof of Proposition~\ref{prop:esperance:tauAc}]
Using the above notations, we find that for all $k\in\N$,
\begin{align*}
\Prob_{x,a}\left(N_{\tau^{A^c}}(A^c)=k\right)=&\,\Prob_{x,a}\left(\tau_{k}<\tau^{A^c}<\tau_{k+1}\right)\\
=&\,\Prob_{x,a}\left(\tau^{A^c}<\tau_{k+1}\mid \tau_{k}<\tau^{A^c}\right)\Prob_{x,a}\left(\tau_{k}<\tau^{A^c}\mid \tau_{k-1}<\tau^{A^c}\right)\\
&\qquad\times\cdots\times\Prob_{x,a}\left(\tau_{2}<\tau^{A^c}\mid \tau_{1}<\tau^{A^c}\right)\Prob_{x,a}\left(\tau_{1}<\tau^{A^c}\right)
\end{align*}
since $\{\tau_j<\tau^{A^c}\}\supset\{\tau_{j+1}<\tau^{A^c}\}$ for all $j\geq1$.

If $x\in\{0,1\}$, then $\E_{x,a}\left(\tau^{A^c}\right)=\E_{x,a}\left(N_{\tau^{A^c}}(A^c)\right)=0$. Consider now the case where $x\in\,]b_c,1[$ and $A_{0-}^c=a=1$. Notice that for all $k\geq1$, on $\{\tau_k<\tau^{A^c}\}$, $X^{A^c}_{\tau_k}=b_c$ if $k$ is odd and $X^{A^c}_{\tau_k}=1-b_c$ if $k$ is even. Clearly, $\Prob_{x,1}\left(\tau_{1}<\tau^{A^c}\right)=p^+_x(b_c,1)$ and since the process $X^{A^c}$ is strong Markov by construction, for $k$ even (but also, by symmetry, for $k$ odd),
\begin{equation*}
\Prob_{x,1}\left(\tau_{k+1}<\tau^{A^c}\mid \tau_k<\tau^{A^c}\right)=\Prob_{1-b_c,1}\left(\tau_1<\tau^{A^c}\right)=p^+_{1-b_c}(b_c,1).
\end{equation*}
Therefore,
\begin{equation}\label{eq:P(N=k)}
\Prob_{x,1}\left(N_{\tau^{A^c}}(A^c)=k\right)=\left\{ \begin{array}{ll}
                                                        1-p^+_x(b_c,1), & \text{ if } k=0, \\
                                                       p^+_x(b_c,1)\left(p^+_{1-b_c}(b_c,1)\right)^{k-1}(1-p^+_{1-b_c}(b_c,1)), & \text{ if } k\geq1,
                                                      \end{array}
\right.
\end{equation}
that is, given $N_{\tau^{A^c}}(A^c)\geq1$, $N_{\tau^{A^c}}(A^c)$ is a geometric r.v.~with parameter $1-p^+_{1-b_c}(b_c,1)\in\,]0,1[\,$. Therefore,
$\E_{x,1}\left(N_{\tau^{A^c}}(A^c)\right)<+\infty$, and this establishes the second statement in Proposition~\ref{prop:esperance:tauAc}.

Turning to the other statement, by the law of total probability,
\begin{equation}\label{eq:serie:E:tauAc}
\E_{x,1}\left(\tau^{A^c}\right)=\sum_{k=0}^{+\infty}\E_{x,1}\left(\tau^{A^c}\left\vert\, \tau_{k}<\tau^{A^c}<\tau_{k+1}\right.\right)\Prob_{x,1}\left(N_{\tau^{A^c}}(A^c)=k\right).
\end{equation}
For $k=0$,
$$\E_{x,1}\left(\tau^{A^c}\left\vert\, 0<\tau^{A^c}<\tau_{1}\right.\right)=E_x(1,b_c,1).$$
For $k\geq1$, on $\{ \tau_k<\tau^{A^c}<\tau_{k+1}\}$,
$$\tau^{A^c}=\tau_1+(\tau_2-\tau_1)+\cdots+(\tau_k-\tau_{k-1})+  (\tau^{A^c}-\tau_k).$$
By the strong Markov property,
$$\E_{x,1}\left(\tau_1\left\vert\, \tau_k<\tau^{A^c}<\tau_{k+1}\right.\right)=\E_{x,1}\left(\tau_1\left\vert\, \tau_1=\sigma^+_{b_c,1}\right.\right)=E_x(b_c,b_c,1),$$
and for $2\leq \ell\leq k$,
$$\E_{x,1}\left(\tau_\ell-\tau_{\ell-1}\left\vert\, \tau_k<\tau^{A^c}<\tau_{k+1}\right.\right)=E_{1-b_c}(b_c,b_c,1)$$
and
$$\E_{x,1}\left(\tau^{A^c}-\tau_k\left\vert\, \tau_k<\tau^{A^c}<\tau_{k+1}\right.\right)=E_{1-b_c}(1,b_c,1).$$
Therefore, for $k\geq1$,
$$\E_{x,1}\left(\tau^{A^c}\left\vert\, \tau_k<\tau^{A^c}<\tau_{k+1}\right.\right)=E_x(b_c,b_c,1)+(k-1)E_{1-b_c}(b_c,b_c,1)+E_{1-b_c}(1,b_c,1),$$
and we conclude from~(\ref{eq:P(N=k)}) and~(\ref{eq:serie:E:tauAc}) that $\E_{x,1}\left(\tau^{A^c}\right)<+\infty$.

The cases $x\in[0,a_c[\,$, $x\in[a_c,b_c]$ and $a=-1$ are treated similarly. This completes the proof of Proposition~\ref{prop:esperance:tauAc}.
\end{proof}

\begin{Lem}\label{lem:generator:Vbar}
Let ${\mathbb L}_a$ denote the infinitesimal generator of a Brownian motion with drift $a\mu$, $a\in\{\pm1\}$; that is, for $f\in \mathcal{C}^2([0,1], \R)$
\begin{equation}\label{eq:def:generator}
{\mathbb L}_af(x)=a\mu \frac{\partial f}{\partial x}(x)+\frac{1}{2}\frac{\partial^2 f}{\partial x^2}(x).
\end{equation}
Then, for all $0<c<c^*(\mu)$,
\begin{align}
1+{\mathbb L}_a \bar{V}_c(x, a)&=0,     \qquad\mbox{ for all } x \in  C_a\label{eq:generator:Ca},\\
1+{\mathbb L}_a \bar{V}_c(x, a)&>0,     \qquad\mbox{ for all } x \in  D_a\setminus \partial D_a\label{eq:inequ:generator:Da}.
\end{align}
Moreover, if $c=c^*(\mu)$, then
\begin{equation}\label{eq:generator:Ca:c*}
1+{\mathbb L}_a \bar{V}_{c^*}(x, a)=0,     \qquad\mbox{ for all } x \in  [0,1].
\end{equation}
\end{Lem}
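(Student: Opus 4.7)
The argument splits according to whether $c<c^*(\mu)$ or $c=c^*(\mu)$ and, in the first case, whether $x$ lies in the continuation region $C_a$ or in the interior of the switching region $D_a$. By the symmetry~\eqref{eq:symmetry}, one immediately checks that $\LL_{-1}\bar V_c(x,-1)=\LL_1\bar V_c(1-x,1)$ (first derivatives pick up a sign, second derivatives do not, and the drift sign compensates), and that the decomposition $[0,1]=C_{-1}\cup D_{-1}$ is the image of $[0,1]=C_{1}\cup D_{1}$ under $x\mapsto 1-x$; so it suffices to treat $a=+1$. For $x\in C_1=[0,a_c[\,\cup\,]b_c,1]$, identity~\eqref{eq:generator:Ca} is just the o.d.e.~\eqref{eq:ODE:continuation} rewritten in terms of the generator $\LL_1$, and there is nothing further to prove.

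\emph{Switching region.} For $x\in[a_c,b_c]$, two differentiations of the middle piece of~\eqref{eq:sol:FBP:min:c<c*} give
$$\LL_1\bar V_c(x,1)=1+4\mu^2\alpha_c\,e^{2\mu x},$$
so that $1+\LL_1\bar V_c(x,1)>0$ is equivalent to $2\mu x<\log\bigl(-1/(2\mu^2\alpha_c)\bigr)=\tilde m_c$, using $\alpha_c<0$ from~\eqref{eq:alpha}. But exactly this bound was already obtained inside the proof of Proposition~\ref{prop:sol:FBP:min} at~\eqref{eq_inequ_2mubc_mtilde}: $2\mu b_c<\tilde m_c$. Since $2\mu x\leq 2\mu b_c$ throughout $[a_c,b_c]$, the strict inequality~\eqref{eq:inequ:generator:Da} holds on all of $D_1$, and a fortiori on $D_1\setminus\partial D_1$.

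\emph{Critical case.} When $c=c^*(\mu)$, Proposition~\ref{prop:sol:FBP:min} identifies $\bar V_{c^*}(x,a)$ with the expected exit time $f^{a\mu}(x)=\E_x(\sigma^{a\mu})$ of a Brownian motion with constant drift $a\mu$. Differentiating the explicit formula~\eqref{eq:def:fnu} (or invoking the standard backward Kolmogorov equation) yields $\LL_a f^{a\mu}\equiv -1$ on $[0,1]$, which is~\eqref{eq:generator:Ca:c*}. The only non-mechanical step in the whole lemma is the bound $2\mu b_c<\tilde m_c$: without it, the smooth-fit conditions could in principle be satisfied for a $b_c$ such that $1+\LL_1\bar V_c$ changes sign inside $D_1$, and the later verification argument would collapse. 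Thus, strictly speaking, the content of Lemma~\ref{lem:generator:Vbar} is already hidden in Remark~\ref{rem:discussion:h:htilde} and in the selection of the admissible root $s_c$ of $\tilde h_c$; once that has been noted, the rest is routine differentiation.
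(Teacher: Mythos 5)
Your proof is correct and essentially matches the paper's: the paper also reduces to showing $1+\LL_1\bar V_c(x,1)=2\bigl(1+2\mu^2\alpha_c e^{2\mu x}\bigr)>0$ on $\,]a_c,b_c[\,$ (it gets there via the identity $1+\LL_1\bar V_c(x,1)=2\mu\,\frac{\partial\bar V_c}{\partial x}(x,-1)$ rather than by differentiating the middle branch directly, and closes with the bound $e^{2\mu x}<e^{\mu}$ and $\cosh(2\mu b_c-\mu)>1$ instead of your equivalent $2\mu x\leq 2\mu b_c<\tilde m_c$ from \eqref{eq_inequ_2mubc_mtilde}). The symmetry reduction to $a=1$ and the treatment of $C_1$ and of the case $c=c^*(\mu)$ coincide with the paper's.
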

\begin{proof}
In the case where $c=c^*(\mu)$, (\ref{eq:generator:Ca:c*}) is a standard property of $\bar{V}_{c^*}(x, a)=f^{a\mu}(x)$ (see~(\ref{eq:sol:FBP:c*:V})), but it is also easily obtained from the explicit expression that we have for $f^{a\mu}$ (see~(\ref{eq:def:fnu})).

Assume $0<c<c^*$ and consider the case where $a=1$, since the other case is similar. The function $\bar{V}_c(x,1)$ is $\mathcal{C}^2$ in both $C_1$ and $D_1\setminus \partial D_1$. The first equality~(\ref{eq:generator:Ca}) follows from the construction of $\bar{V}_c$ (see~(\ref{eq:ODE:continuation})). It remains to prove (\ref{eq:inequ:generator:Da}). By construction (see~(\ref{eq:switching})), for all $x\in D_1\setminus \partial D_1=\,]a_c,b_c[\,$, we have $\bar{V}_c(x,1)=c+\bar{V}_c(x,-1)$  and thus,
$$1+{\mathbb L}_1 \bar{V}_c(x, 1)=1+{\mathbb L}_1 (c+\bar{V}_c(x,-1))=1+\mu\,\frac{\partial \bar{V}_c}{\partial x}(x,-1)+\frac{1}{2}\,\frac{\partial^2 \bar{V}_c}{\partial x^2}(x,-1).$$
Moreover, since $]a_c,b_c[$ belongs to $ C_{-1}$ in which~(\ref{eq:generator:Ca}) is satisfied, we have
\begin{equation}\label{3.38a}
1+{\mathbb L}_1 \bar{V}_c(x, 1)=1+\mu\,\frac{\partial \bar{V}_c}{\partial x}(x,-1)+\frac{1}{2}\,\frac{\partial^2 \bar{V}_c}{\partial x^2}(x,-1)-(1+{\mathbb L}_{-1} \bar{V}_c(x, -1))=2\mu\,\frac{\partial \bar{V}_c}{\partial x}(x,-1).
\end{equation}
Thus, we have to prove that $\mu\,\frac{\partial \bar{V}_c}{\partial x}(x,-1)>0$ for all $x\in D_1\setminus \partial D_1$. If $x\in\,]a_c,b_c[\,$, then $1-x\in\,]1-b_c,1-a_c[\,\subset\,]b_c,1]$ and according to~(\ref{eq:sol:FBP:min:c<c*}),
$$\bar{V}_c(x, -1)=\bar{V}_c(1-x, 1)=\tfrac{x}{\mu}+\alpha_c \left(e^{2\mu x}-1\right).$$
Therefore, $\mu\,\frac{\partial \bar{V}_c}{\partial x}(x,-1)=1+ 2\mu^2\alpha_c e^{2\mu x}$ for all $x\in\,]a_c,b_c[\,$.  As $x\in \,]a_c,b_c[\,\subset \,]0,\frac{1}{2}[$ and $\alpha_c<0$, we get according to~(\ref{eq:alpha:bis}) that
$$1+ 2\mu^2\alpha_c e^{2\mu x}>1+ 2\mu^2\alpha_c e^{\mu} =1-\frac{1}{\cosh(2\mu b_c-\mu)}\,>0,$$
which establishes the lemma.
\end{proof}

\begin{Lem}\label{lem:ineq:Vbar}
For all $0<c\leq c^*(\mu)$ and any $a\in\{-1,1\}$, if $x\in C_a$, then
$$\bar{V}_c(x,a)<c+\bar{V}_c(x,-a).$$
\end{Lem}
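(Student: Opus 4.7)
Define $H_a(x) := c + \bar V_c(x, -a) - \bar V_c(x, a)$; the lemma claims $H_a(x) > 0$ for $x \in C_a$. The symmetry identity $\bar V_c(x,-1) = \bar V_c(1-x,1)$ from~(\ref{eq:symmetry}) gives $H_{-1}(x) = H_1(1-x)$ together with $C_{-1} = 1 - C_1$, so it suffices to prove $H_1 > 0$ on $C_1 = [0,a_c[\,\cup\,]b_c,1]$. I will treat the cases $c = c^*(\mu)$ and $0 < c < c^*(\mu)$ separately.

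For $c = c^*(\mu)$, Proposition~\ref{prop:sol:FBP:min} gives $\bar V_{c^*}(x,\pm 1) = f^{\pm\mu}(x)$, so the claim reduces to $(f^\mu - f^{-\mu})(x) < c^*$ for $x \neq a_{c^*}$. From~(\ref{eq:def:fnu}) one checks that $f^\mu - f^{-\mu}$ vanishes at $0, \tfrac12, 1$, is antisymmetric about $\tfrac12$, and has second derivative $\tfrac{4\mu}{e^{2\mu}-1}(e^{2\mu x} - e^{2\mu(1-x)})$, strictly negative on $[0,\tfrac12)$ and strictly positive on $(\tfrac12,1]$. Hence $f^\mu - f^{-\mu}$ is strictly concave on $[0,\tfrac12]$ and strictly convex on $[\tfrac12,1]$, so it attains its global maximum $c^*$ at a unique point of $(0,\tfrac12)$, which by~(\ref{eq:maximum:diff:fnu}) is exactly $a_{c^*}$.

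For $0 < c < c^*(\mu)$, the plan is to compute $H_1$ piecewise on $C_1$ using~(\ref{eq:sol:FBP:min:c<c*}) and the symmetry. On the leftmost piece $[0,a_c[$, the reflection $1-x$ lies in $]b_c,1]$, so
\[
  H_1(x) = c + \tfrac{2x}{\mu} + \alpha_c(e^{2\mu x}-1) - \beta_c(e^{-2\mu x}-1).
\]
The crux is to establish strict convexity of $H_1$ on $[0, a_c]$. Equation~(\ref{eq:alpha}) gives $\alpha_c < 0$; substituting this into~(\ref{eq:beta}) shows $\beta_c < 0$, and both this sign and the inequality $H_1''(a_c) > 0$ reduce via simple algebra to the elementary bound $\cosh(2\mu b_c - \mu) > e^{2\mu a_c - \mu}$, itself a direct consequence of $0 < a_c \le b_c < \tfrac12$. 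Then $H_1'''(x) = 8\mu^3(\alpha_c e^{2\mu x} + \beta_c e^{-2\mu x}) < 0$, so $H_1''$ is strictly decreasing; combined with $H_1''(a_c) > 0$, this yields $H_1'' > 0$ throughout $[0, a_c]$. Continuous and smooth fit, together with the smoothness of $\bar V_c(\cdot,-1)$ at $a_c$ (since $1-a_c \in\,]b_c,1]$ lies in the continuation region for drift $+1$), give $H_1(a_c) = H_1'(a_c) = 0$; strict convexity then forces $H_1' < 0$ on $[0,a_c)$ and hence $H_1 > H_1(a_c) = 0$ on $[0,a_c[$.

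The right piece $]b_c,1]$ splits into four sub-intervals. On $]b_c, 1-b_c[$, both $x$ and $1-x$ lie in the third branch of~(\ref{eq:sol:FBP:min:c<c*}), giving $H_1(x) = c + \tfrac{2x-1}{\mu} + \alpha_c(e^{2\mu x} - e^{2\mu(1-x)})$ with $H_1'' \geq 0$ on $[b_c,\tfrac12]$ and $H_1'' \leq 0$ on $[\tfrac12,1-b_c]$; strict convexity with $H_1(b_c)=H_1'(b_c)=0$ on the first part, and concavity together with $H_1(\tfrac12) = c$ and $H_1(1-b_c) = 2c$ (a direct consequence of~(\ref{eq:resolutionb})) on the second, yield $H_1 \geq c$. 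On $[1-b_c, 1-a_c]$, $1-x \in [a_c, b_c]$, so~(\ref{eq:switching}) combined with the symmetry gives $H_1 \equiv 2c$. Finally, on $]1-a_c, 1]$ the identity $H_1(x) + H_1(1-x) = 2c$ (immediate from~(\ref{eq:symmetry})) combined with the bound $H_1(1-x) \in [0, c]$ on $[0, a_c]$ already established yields $H_1(x) \geq c > 0$. The main technical hurdle is the strict convexity step on $[0, a_c]$; once the signs of $\alpha_c$, $\beta_c$ and $H_1''(a_c)$ are settled, everything else follows from routine piecewise computations and the symmetry.
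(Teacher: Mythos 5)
Your proof is correct and follows the same case decomposition and the same explicit functions (the paper's $d_1$, $d_2$, $d_3$) as the published argument; the only substantive difference is that on $[0,a_c]$ and on $]b_c,1-b_c[$ you obtain the required monotonicity from convexity considerations ($H_1'''<0$ together with $H_1''(a_c)>0$, resp.\ the sign of $H_1''$ on each half of $]b_c,1-b_c[$), whereas the paper locates explicitly the roots $x_1^{\pm}$, $x_2^{\pm}$ of the derivative viewed as a quadratic in $e^{2\mu x}$ and identifies $a_c$, $b_c$ among them. Both devices work and all the key identities ($H_1(a_c)=H_1'(a_c)=0$, $\beta_c<0$, $H_1\equiv 2c$ on $[1-b_c,1-a_c]$, $H_1(x)+H_1(1-x)=2c$) check out; the one small slip is that on $]b_c,\tfrac12[$ the conclusion should read $H_1>0$ rather than $H_1\geq c$, since there $H_1$ increases from $0$ at $b_c$ to $c$ at $\tfrac12$, which does not affect the result.
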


\begin{proof}
In the case where $c=c^*(\mu)$, the result follows immediately from~(\ref{eq:ac*:argmax}) together with~(\ref{eq:sol:FBP:c*:V}). Assume $0<c<c^*(\mu)$ and consider the case where $a=1$, since the other case is similar. We distinguish four cases.
\vskip 10pt

 {\em Case 1.} If $x\in[0,a_c[\,$, then $1-x\in\,]1-a_c,1]\subset\,]b_c,1]$ and, according to~(\ref{eq:symmetry}) and~(\ref{eq:sol:FBP:min:c<c*}),
$$c+\bar{V}_c(x,-1)-\bar{V}_c(x,1)=c+ \tfrac{2x}{\mu} +\alpha_c\left(e^{2\mu x}-1\right) -\beta_c\left(e^{-2\mu x}-1\right)=: d_1(x).$$
We will show that $d_1(x)>0$ for $x\in[0,a_c[\,$. Indeed, $d^\prime_1(x)$ vanishes at most twice on $\R$, because the equation $d_1'(x)=0$ is a quadratic equation in $ e^{2\mu x}$. The roots of $d^\prime_1(x)$ are given by
\begin{equation}\label{eq:def:roots:x1pm}
x_1^{\pm}=\frac{1}{2\mu}\log\left(-\frac{1}{2\mu^2\alpha_c}\pm\sqrt{\Delta_1}\right),
\end{equation}
with $\Delta_1=\frac{1}{4\mu^4\alpha_c^2}-\frac{\beta_c}{\alpha_c}$.  By~(\ref{eq:beta}) and~(\ref{eq:alpha}), we have
$$1-4\mu^4\alpha_c\beta_c=\left(1+ 2\mu^2\alpha_ce^{2\mu a_c}\right)^2=\left(1- \frac{ e^{2\mu a_c-\mu}}{\cosh(2\mu b_c-\mu)}\right)^2>0$$
and thus $\Delta_1>0$. Using \eqref{eq:beta}, \eqref{eq:alpha} and the fact that $a_c < b_c$, we see that $\beta_c <0$, and so $(-2\mu \alpha_c)^{-1} - \sqrt{\Delta_1} >0$. Therefore, $x_1^{\pm}\in\R$. From the formula for $d^\prime_1(x)$ and~(\ref{eq:beta}), we see that
$$d_1'(a_c)=\tfrac{2}{\mu}+2\mu\alpha_c e^{2\mu a_c}+2\mu\beta_c e^{-2\mu a_c}=\tfrac{2}{\mu}+2\mu\alpha_c e^{2\mu a_c}- 2\mu \left(\alpha_c e^{4\mu a_c}+\frac{e^{2\mu a_c}}{\mu^2}\right)e^{-2\mu a_c}=0.$$
Therefore, $a_c\in\{x_1^-,x_1^+\}$. By (\ref{eq_inequ_2mubc_mtilde}),
$$0<2\mu a_c<2\mu b_c< \tilde{m}_c=\log\left(\frac{-1}{2\mu^2\alpha_c}\right).$$
Since $2\mu x_1^-<\tilde{m}_c<2\mu x_1^+$, we find that $x_1^-=a_c<x_1^+$. This implies that the function $d_1$ is monotone decreasing on $]-\infty, a_c[$ with $d_1(0)=c>0$ and $d_1(a_c)=0$ by (\ref{eq:switching}). The function $d_1$ is thus strictly positive on $[0,a_c[\,$, which proves the desired inequality.
\vskip 10pt

  {\em Case 2.} If $x\in\,]b_c,1-b_c[\,$, then $1-x\in \,]b_c,1-b_c[$ and according to~(\ref{eq:symmetry}) and~(\ref{eq:sol:FBP:min:c<c*}), we have
\begin{equation}\label{eq:d2}
c+\bar{V}_c(x,-1)-\bar{V}_c(x,1)=c +\tfrac{2x-1}{\mu}+ \alpha_c\left(e^{2\mu x}-e^{2\mu}e^{-2\mu x}\right)=: d_2(x).
\end{equation}
We will show that $d_2(x) >0$ for $x \in\,]b_c,1-b_c[$. Indeed, its derivative $d_2'$ vanishes at most twice on $\R$, at
$$x_2^\pm=\frac{1}{2\mu}\log\left(-\frac{1}{2\mu^2\alpha_c}\pm\sqrt{\Delta_2}\right),$$
where $\Delta_2=\frac{1}{4\mu^4\alpha_c^2}-e^{2\mu}$.  By~(\ref{eq:alpha}), we find that $-2\mu^2\alpha_c=\frac{e^{-\mu}}{\cosh(2\mu b_c-\mu)}<e^{-\mu}$ since $b_c<\frac{1}{2}$. Hence, $\Delta_2>0$, and according to~(\ref{eq:switching}) and condition~(\ref{eq:smooth:fit2}), we have that $d_2'(b_c)=0$. Moreover, we notice that $d_2'(1-b_c)=d_2'(b_c)=0$. Thus, $x_2^-=b_c<x_2^+=1-b_c$ and the function $d_2$ is monotone increasing on $]b_c,1-b_c[$. From~(\ref{eq:switching}), we have that $d_2(b_c)=0$, thus, the function $d_2$ is strictly positive on $]b_c,1-b_c[\,$, which proves the desired inequality.
\vskip 10pt

   {\em Case 3.} If $x\in[1-b_c, 1-a_c]$, then $1-x\in[a_c,b_c]=D_1$ and by~(\ref{eq:symmetry}) and~(\ref{eq:switching}),
\begin{equation}\label{e3.41a}
c+\bar{V}_c(x,-1)-\bar{V}_c(x,1)= c+\bar{V}_c(1-x,-1)+c-\bar{V}_c(x,1)= 2 c>0,
\end{equation}
which proves the desired inequality.
\vskip 10pt

   {\em Case 4.} If $x\in\,]1-a_c,1]$, then $1-x\in [0,a_c[$ and we have, according to~(\ref{eq:symmetry}) and~(\ref{eq:sol:FBP:min:c<c*}),
$$c+\bar{V}_c(x,-1)-\bar{V}_c(x,1)=c -2\frac{1-x}{\mu}+ \beta_c\left(e^{-2\mu(1-x)}-1\right)-\alpha_c\left(e^{2\mu(1-x)}-1\right)=: d_3(x).$$
We see that $d_3(x)=2c-d_1(1-x)$. Thus, $d_3$ is monotone on $]1-a_c,1]$ and since $d_3(1-a_c)=2c-d_1(a_c)=2c$ and $d_3(1)=2c-d_1(0)=c$, the function $d_3$ is strictly positive on this interval. This proves the desired inequality in this last case.
\end{proof}

Let us define $H^c: \R_+ \times [0,1] \times\{-1,1\}\times\N \longrightarrow \R_+$ by
\begin{equation}\label{eq:def:H}
(t, x, a, n) \mapsto H^c(t,x,a,n) := t + cn + \bar{V}_c(x, a).
\end{equation}
For $A\in\mathcal{A}$, we consider the process $H^{c,A}_t=H^c(t,X^A_t, A_t, N_t(A))$, where $N_t(A)$ is given by~(\ref{eq:def:NtA}), and we write $\Delta H^{c,A}_s=H^{c,A}_s-H^{c,A}_{s-}$. Observe that $H^{c,A}_{0-}=\bar{V}_c(x, a)$, $\Prob_{x,a}$-almost surely.

\begin{Cor}\label{cor:DeltaH}
Let $0<c\leq c^*(\mu)$, let $A^c$ denote the control satisfying~(\ref{eq:def:candidate:Ac}) and let $A\in\mathcal{A}$ be any admissible control. Then, for all $x\in[0,1]$ and $a\in\{-1,1\}$, $\Prob_{x,a}$-a.s., for all $s\in\R_+$:
(1) $\Delta H^{c,A^c}_s=0$;
(2) $\Delta H^{c,\,A}_s\geq0$;
and (3) $\Delta H^{c,\,A}_s>0$  if and only if $X^A_s\in C_{A_{s-}}$ and $A_s\neq A_{s-}$.
\end{Cor}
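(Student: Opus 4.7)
The key observation is that the process $X^A$ is continuous (it is a Brownian motion plus a process of bounded variation), while $t$ is trivially continuous and $N_t(A)$ jumps by exactly $+1$ precisely at the switching times of $A$. Therefore $H^{c,A}$ can jump only at a time $s$ where $A_s \neq A_{s-}$, and at such a time
\[
\Delta H^{c,A}_s \;=\; c + \bar{V}_c(X^A_s, A_s) - \bar{V}_c(X^A_s, A_{s-})
           \;=\; c + \bar{V}_c(X^A_s, -A_{s-}) - \bar{V}_c(X^A_s, A_{s-}).
\]
Thus the entire corollary reduces to evaluating this quantity according to whether $X^A_s$ lies in the switching region $D_{A_{s-}}$ or the continuation region $C_{A_{s-}}$. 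Since $C_a \cup D_a = [0,1]$ and $C_a \cap D_a = \emptyset$, this is a genuine dichotomy.

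The plan is then to treat the two cases separately. First, if $X^A_s \in D_{A_{s-}}$, then in the case $A_{s-}=1$ the identity~\eqref{eq:switching} immediately gives $\bar{V}_c(X^A_s,-1) - \bar{V}_c(X^A_s,1) = -c$; in the case $A_{s-}=-1$, one combines the symmetry~\eqref{eq:symmetry} with~\eqref{eq:switching} applied at $1-X^A_s \in D_1$ to obtain the same conclusion $\bar{V}_c(X^A_s,1) - \bar{V}_c(X^A_s,-1) = -c$. In either situation $\Delta H^{c,A}_s = c - c = 0$. Second, if $X^A_s \in C_{A_{s-}}$, then Lemma~\ref{lem:ineq:Vbar} (which holds for both values of $a$) yields the strict inequality $\bar{V}_c(X^A_s,A_{s-}) < c + \bar{V}_c(X^A_s,-A_{s-})$, equivalently $\bar{V}_c(X^A_s,-A_{s-}) - \bar{V}_c(X^A_s,A_{s-}) > -c$, so $\Delta H^{c,A}_s > 0$. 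Together these two cases prove statements (2) and (3) for an arbitrary $A \in \mathcal{A}$.

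Statement (1) will then follow immediately from the construction of $A^c$. Indeed, the definition~\eqref{eq:def:candidate:Ac} forces $A^c_s \neq A^c_{s-}$ only when $X^{A^c}_s \in D_{A^c_{s-}}$; at any non-switching time the jump is automatically zero, and at a switching time the first case above applies, yielding $\Delta H^{c,A^c}_s = 0$.

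I do not expect a serious obstacle here: the content is essentially a bookkeeping exercise once Lemma~\ref{lem:ineq:Vbar} and the free-boundary relation~\eqref{eq:switching} (together with the symmetry~\eqref{eq:symmetry} to handle $a=-1$) are in hand. The only mild subtlety worth mentioning is that the jump at time $s=0$, which can occur because $A_{0-}$ may differ from $A_0$ and is counted by allowing $N_0(A) \geq 1$, fits into the same framework without modification, since $X^A_0 = x$ is deterministic and the case analysis above applies verbatim with $s=0$.
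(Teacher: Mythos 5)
Your proof is correct and follows essentially the same route as the paper's: reduce $\Delta H^{c,A}_s$ at a switching time to $c+\bar{V}_c(X^A_s,-A_{s-})-\bar{V}_c(X^A_s,A_{s-})$, then split according to whether $X^A_s$ lies in $D_{A_{s-}}$ (where \eqref{eq:switching}, via \eqref{eq:symmetry} when $A_{s-}=-1$, gives zero) or in $C_{A_{s-}}$ (where Lemma~\ref{lem:ineq:Vbar} gives strict positivity), with (1) following from the defining property of $A^c$. Your extra remarks on the exhaustive dichotomy $C_a\cup D_a=[0,1]$ and on the possible jump at $s=0$ are accurate and only make the argument more explicit than the paper's.
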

\begin{proof}
Let $A\in\mathcal{A}$ and let  $s\in\R_+$. If $s$ is a time of continuity of the strategy $A$, then clearly $\Delta H^{c,\,A}_s=0$. Assume $s$ is such that $A_{s-}\neq A_s$. Then $\Delta N_s(A)=1$ and
$$\Delta H^{c,\,A}_s = c+\bar{V}_c(X^A_s,-A_{s-})-\bar{V}_c(X^A_{s},A_{s-}).$$
If $X^A_s\in D_{A_{s-}}$, then $\Delta H^{c,\,A}_s=0$ by construction of $\bar{V}_c$ (see~(\ref{eq:switching})). If $X^A_s\in C_{A_{s-}}$, then $\Delta H^{c,\,A}_s>0$ by Lemma~\ref{lem:ineq:Vbar}. For the control $A^c$ satisfying~(\ref{eq:def:candidate:Ac}), $A^c_{s-}\neq A_s^c$ if and only if $X^{A^c}_s\in D_{A^c_{s-}}$, and this completes the proof.
\end{proof}

We can now prove the following proposition.

\begin{Prop}\label{prop:martingale}
Let $A^c$ denote the candidate strategy satisfying~(\ref{eq:def:candidate:Ac}), let $H^c$ be defined by~(\ref{eq:def:H}) and let $c^*(\mu)$ be given by~(\ref{eq:def:c*}). If $0<c\leq c^*(\mu)$, then for all $x \in [0,1]$ and $a \in \{-1,1\}$:
\begin{enumerate}
  \item the process $(H^{c,A^c}_{t\wedge\tau^{A^c}})_{t\geq0}$ is a martingale under $\Prob_{x,a}$;
  \item for any $A\in\mathcal{A}$, the process $(H^{c,A}_{t\wedge\tau^A})_{t\geq0}$ is a submartingale under $\Prob_{x,a}$.
\end{enumerate}
\end{Prop}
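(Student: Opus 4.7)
The plan is to apply the local time-space formula of \cite{Peskir07} to the process $\bar V_c(X^A_s, A_s)$ and then combine the resulting decomposition with Lemma~\ref{lem:generator:Vbar} and Corollary~\ref{cor:DeltaH}. For fixed $a \in \{\pm 1\}$, the function $x \mapsto \bar V_c(x, a)$ is $C^1$ on $[0,1]$ by the smooth fit conditions \eqref{eq:smooth:fit1}--\eqref{eq:smooth:fit2} together with the symmetry \eqref{eq:symmetry}, and is $C^2$ off the finite set $\{a_c, b_c, 1-b_c, 1-a_c\}$. Since $A$ is piecewise constant (so it jumps on a countable, locally finite set), applying Peskir's formula on each interval of constancy of $A$ and collecting the jump terms should yield
\begin{align*}
\bar V_c(X^A_t, A_t) = \bar V_c(x, a) &+ \int_0^t \frac{\partial \bar V_c}{\partial x}(X^A_s, A_s)\, dB_s + \int_0^t {\mathbb L}_{A_s}\bar V_c(X^A_s, A_s)\, ds\\
&+ \sum_{0 \leq s \leq t}\bigl[\bar V_c(X^A_s, A_s) - \bar V_c(X^A_s, A_{s-})\bigr].
\end{align*}
Adding $t + cN_t(A)$ and using the definition of $\Delta H^{c,A}_s$, this rewrites as
\begin{equation*}
H^{c,A}_t = \bar V_c(x,a) + \int_0^t \frac{\partial \bar V_c}{\partial x}(X^A_s, A_s)\, dB_s + \int_0^t\!\bigl[1 + {\mathbb L}_{A_s}\bar V_c(X^A_s, A_s)\bigr]\, ds + \sum_{0 \leq s \leq t}\Delta H^{c,A}_s.
\end{equation*}

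For part~(1), taking $A = A^c$, between two consecutive switching times $\tau_n$ and $\tau_{n+1}$ the construction of $A^c$ forces $X^{A^c}_s$ to lie in $C_{A^c_s}$, and Lemma~\ref{lem:generator:Vbar} then gives $1 + {\mathbb L}_{A^c_s}\bar V_c(X^{A^c}_s, A^c_s) = 0$ for Lebesgue-almost every $s \leq \tau^{A^c}$. By Corollary~\ref{cor:DeltaH}(1) every jump of $H^{c,A^c}$ is zero, so stopping at $\tau^{A^c}$ leaves
\begin{equation*}
H^{c,A^c}_{t \wedge \tau^{A^c}} = \bar V_c(x,a) + \int_0^{t \wedge \tau^{A^c}} \frac{\partial \bar V_c}{\partial x}(X^{A^c}_s, A^c_s)\, dB_s,
\end{equation*}
which is a genuine martingale because $\partial_x \bar V_c(\cdot, \pm 1)$ is continuous, and thus bounded, on the compact interval $[0,1]$. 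For part~(2), applied to an arbitrary $A \in \mathcal{A}$, Lemma~\ref{lem:generator:Vbar} gives $1 + {\mathbb L}_{A_s}\bar V_c(X^A_s, A_s) \geq 0$ for Lebesgue-a.e.~$s$ (equal to $0$ on the continuation set, strictly positive on the switching set), and Corollary~\ref{cor:DeltaH}(2) gives $\Delta H^{c,A}_s \geq 0$ at every jump time. The same decomposition stopped at $\tau^A$ therefore becomes a sum of the martingale $M^A_t = \int_0^{t \wedge \tau^A}\partial_x \bar V_c(X^A_s, A_s)\, dB_s$ and a non-decreasing process, establishing the submartingale property (localizing first if $\E_{x,a}(N_t(A)) = +\infty$, in which case the cost is anyway infinite).

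The main technical obstacle is the rigorous justification of the first display: $\bar V_c(\cdot, a)$ is only $C^1$, not $C^2$, at the free boundaries $a_c$ and $b_c$, so classical It\^o is not directly applicable. This is precisely where Peskir's formula intervenes and where the smooth-fit conditions \eqref{eq:smooth:fit1}--\eqref{eq:smooth:fit2} earn their keep: they guarantee that the local-time correction terms at $a_c$ and $b_c$ (which are proportional to the jumps of $\partial_x \bar V_c$ at these points) are identically zero, so the formula reduces to the classical expansion with ${\mathbb L}_{A_s}\bar V_c$ interpreted pointwise away from the two exceptional points. A small additional care must be taken to separate the continuous contribution from $X^A$ (which is a Brownian motion with locally constant drift) from the purely discrete jumps of $A$, a separation that is natural because $X^A$ itself has continuous sample paths.
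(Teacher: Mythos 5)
Your proposal is correct and follows essentially the same route as the paper: apply the local time-space formula of \cite{Peskir07}, use the smooth-fit conditions \eqref{eq:smooth:fit1}--\eqref{eq:smooth:fit2} to annihilate the local-time correction at the free boundaries, then invoke Lemma~\ref{lem:generator:Vbar} for the drift term and Corollary~\ref{cor:DeltaH} for the jumps. The only cosmetic difference is that you apply the formula on each interval of constancy of $A$ and collect the jumps by hand, whereas the paper extends $\bar V_c$ linearly in the drift variable and treats $(t,X^A_t,A_t,N_t(A))$ in a single application of the formula; both are sound.
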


In order to prove this, we first define an extension of $\bar{V}_c$. Since this function is defined from $[0,1]\times\{-1,1\}$ into $\R$, we let
\begin{equation}\label{eq:linear:envelope:V}
\bar{V}_c(x,y)=\frac{\bar{V}_c(x,1)-\bar{V}_c(x,-1)}{2}y+\frac{\bar{V}_c(x,1)+\bar{V}_c(x,-1)}{2},\quad y\in[-1,1],
\end{equation}
denote its linear interpolation on $[0,1]\times[-1,1]$, so that $\bar{V}_c(x,y)$ becomes a $C^\infty$-function in the variable $y\in[-1,1]$. Furthermore, we let $\mathcal{D}=\{a_c,b_c,1-a_c,1-b_c\}$ be the set of discontinuities of $\,\frac{\partial^2 \bar{V}_c}{\partial x^2}(x,y)$ in the variable $x$ when $0<c<c^*$.

\begin{proof}[Proof of Proposition~\ref{prop:martingale}.] Using the extension of $\bar{V}_c$, we see that $H^c(t,x,y,n)$ is $\mathcal{C}^\infty$ in the variables $t,y$ and $n$, in the variable $x$, it is $\mathcal{C}^1$ on $[0,1]$ and $\mathcal{C}^\infty$ on $[0,1]\setminus\mathcal{D}$. 
Applying the \emph{local time-space} formula \cite[Theorem 3.2]{Peskir07}, we get for all $t\geq0$,
\begin{align}
H^{c,A}_{t\wedge\tau^A}=&\,H^{c,A}_0 + \int_0^{t\wedge\tau^A}\hspace{-4pt}1\cdot\,ds + \int_0^{t\wedge\tau^A}\,\frac{\partial \bar{V}_c}{\partial x}(X^A_s, A_s)\,dX^A_s +\int_0^{t\wedge\tau^A}\,\frac{\partial \bar{V}_c}{\partial y}(X^A_s, A_{s-})\,dA_s\nonumber\\
&+\int_0^{t\wedge\tau^A}c\,dN_s(A)+ \frac{1}{2}\int_0^{t\wedge\tau^A}\,\frac{\partial^2 \bar{V}_c}{\partial x^2}(X^A_s,A_s)\ind_{\{X^A_s\notin\mathcal{D}\}}(s)\,ds\nonumber \\
&+\frac{1}{2}\int_0^{t\wedge\tau^A}\left(\,\frac{\partial \bar{V}_c}{\partial x}(X^A_s+,A_{s-})-\,\frac{\partial \bar{V}_c}{\partial x}(X^A_s-,A_{s-})\right)\ind_{\{X^A_s\in\mathcal{D}\}}(s)\,d\ell_s^{\mathcal{D}}(X^A)\nonumber\\
&+\sum_{0<s\leq {t\wedge\tau^A}}\left( \Delta H^{c,\,A}_s -c\Delta N_s(A)-\,\frac{\partial \bar{V}_c}{\partial y}(X^A_s, A_{s-})\Delta A_s\right),\label{eq:ito:formula}
\end{align}
where $\ell_s^{\mathcal{D}}(X^A)$ is the local time in $\mathcal{D}$ of the process $X^A$. By the smooth fit conditions, we have $\,\frac{\partial \bar{V}_c}{\partial x}(x+,a)-\,\frac{\partial \bar{V}_c}{\partial x}(x-,a)=0$ for all $x\in[0,1]$ and for all $a\in\{-1,1\}$. Thus, the integral with respect to the local time vanishes. If $\lambda$ stands for the Lebesgue measure, then with probability 1,
$$\ind_{\{X^A_s\notin\mathcal{D}\}}=1, \quad \mbox{ for  }\lambda-\mbox{almost all } s\in \R_+,$$
because $X^A$ is a diffusion and $\mathcal{D}$ is a finite set. Moreover, the semimartingales $(N_t(A))$ and $(A_t)$ are piecewise constant, so~(\ref{eq:ito:formula}) reduces to
$$H^{c,A}_{t\wedge\tau^A}=H^{c,A}_0   + \int_0^{t\wedge\tau^A}(1+{\mathbb L}_{A_s}\bar{V}_c(X^A_s, A_s))\,ds +\int_0^{t\wedge\tau^A}\,\frac{\partial \bar{V}_c}{\partial x}(X^A_s, A_s)\,dB_s+\sum_{0<s\leq {t\wedge\tau^A}} \Delta H^{c,\,A}_s$$
where ${\mathbb L}_a$ is the operator defined in~(\ref{eq:def:generator}) (if we need a value for $\frac{\partial^2\bar{V}_c}{\partial x^2}(x,a)$ for $x\in\mathcal{D}$, we arbitrarily take the second right derivative).

On one hand, if $A=A^c$, then by Corollary~\ref{cor:DeltaH}, $\Delta H^{c,\,A^c}_s=0$ for all $s\geq 0$. Moreover, $\{X^{A^c}_s\in C_1,A^c_s=1\}=\{A^c_s=1\}$ and $\{X^{A^c}_s\in C_{-1},A^c_s=-1\}=\{A^c_s=-1\}$. Therefore,
\begin{align*}
H^{c,A^c}_{t\wedge\tau^{A^c}}=& \,H^{c,A^c}_0 + \int_0^{t\wedge\tau^{A^c}}(1+{\mathbb L}_1\bar{V}_c(X^{A^c}_s, 1))\mathbbm{1}_{\{X^{A^c}_s\in C_1, A^c_s=1\}}\,ds \\
&+ \!\int_0^{t\wedge\tau^{A^c}}\hspace{-3pt}(1+{\mathbb L}_{-1}\bar{V}_c(X^{A^c}_s, -1))\mathbbm{1}_{\{X^{A^c}_s\in C_{-1}, A^c_s=-1\}}\,ds
+\int_0^{t\wedge\tau^{A^c}}\hspace{-3pt}\,\frac{\partial \bar{V}_c}{\partial x}(X^{A^c}_s, A^c_s)\,dB_s\\
=& \,H^{c,A^c}_0   + \int_0^{t\wedge\tau^{A^c}}\,\frac{\partial \bar{V}_c}{\partial x}(X^{A^c}_s, A^c_s)\,dB_s,
\end{align*}
by Lemma~\ref{lem:generator:Vbar}. Finally, $x\mapsto\,\frac{\partial \bar{V}_c}{\partial x}(x, \pm 1)$ is bounded on $[0,1]$, so the stochastic integral above is a martingale, which establishes the first statement.

On the other hand, for an arbitrary control $A\in\mathcal{A}$, we have by the above that for all $u<t$,
\begin{align}
H^{c,A}_{t\wedge\tau^A}-H^{c,A}_{u\wedge\tau^A}
=&\int_{u\wedge\tau^A}^{t\wedge\tau^A}\left(1+{\mathbb L}_{A_s}\bar{V}_c(X^A_s, A_s)\right)\mathbbm{1}_{\{X^A_s\in C_{A_s}\}\cup\{X^A_s\in D_{A_s}\}}\,ds \label{eq:equality:itoformula:HcA}\\
&+\int_{u\wedge\tau^A}^{t\wedge\tau^A}\,\frac{\partial \bar{V}_c}{\partial x}(X^A_s, A_s)\,dB_s+\sum_{u\wedge\tau^A< s\leq t\wedge\tau^A} \Delta H^{c,\,A}_s\nonumber\\
\geq& \int_{u\wedge\tau^A}^{t\wedge\tau^A}\,\frac{\partial \bar{V}_c}{\partial x}(X^A_s, A_s)\,dB_s +\sum_{u\wedge\tau^A< s\leq t\wedge\tau^A} \Delta H^{c,\,A}_s\label{eq:inequality:itoformula:HcA}\\
\geq& \int_{u\wedge\tau^A}^{t\wedge\tau^A}\,\frac{\partial \bar{V}_c}{\partial x}(X^A_s, A_s)\,dB_s, \nonumber
\end{align}
where we used Lemma~\ref{lem:generator:Vbar} and Corollary~\ref{cor:DeltaH}. This shows that $H^{c,A}$ is a submartingale.
\end{proof}

We are now ready to prove the optimality of our candidate strategy.

\begin{proof}[Proof of Theorem~\ref{verification:thm:min}]
Let $0<c\leq c^*(\mu)$. On one hand, by Corollary~\ref{cor:DeltaH} and by the first statement of Proposition~\ref{prop:martingale}, we have that under $\Prob_{x,a}$,
\begin{align*}
\bar{V}_c(x, a)&=H^{c,A^c}_{0-}=H^{c,A^c}_0 =\E_{x,a}\left(H^{c,A^c}_{t\wedge\tau^{A^c}}\right)\\
&=\E_{x,a}\left(t\wedge\tau^{A^c}+  cN_{t\wedge\tau^{A^c}}(A^c)+ \bar{V}_c\left(X^{A^c}_{t\wedge\tau^{A^c}}, A^c_{t\wedge\tau^{A^c}}\right)\right),
\end{align*}
for all $t\geq0$. Since $\bar{V}_c$ is a continuous and bounded function, since $N(A^c)$ is an increasing process and since by Proposition~\ref{prop:esperance:tauAc}, $\E_{x,a}\left(\tau^{A^c}\right)<+\infty$ and $\E_{x,a}\left(N_{\tau^{A^c}}(A^c)\right)<+\infty$, we get by dominated and monotone convergence that
\begin{align}
\bar{V}_c(x, a)&=\lim_{t\rightarrow\infty}\E_{x,a}\left(t\wedge\tau^{A^c}+  cN_{t\wedge\tau^{A^c}}(A^c)+ \bar{V}_c\left(X^{A^c}_{t\wedge\tau^{A^c}}, A^c_{t\wedge\tau^{A^c}}\right)\right)\nonumber\\
 &=\E_{x,a}\left(\tau^{A^c}+  cN_{\tau^{A^c}}(A^c)\right),\label{egalite_vbar}
\end{align}
as $X^{A^c}_{\tau^{A^c}}\in\{0,1\}$ and $\bar{V}_c(0, \pm1)=\bar{V}_c(1, \pm1)=0$. On the other hand, let $A\in\mathcal{A}$ be such that $J_c(x,a,A)<+\infty$. Then, by Corollary~\ref{cor:DeltaH} and by the second statement of Proposition~\ref{prop:martingale},
$$
\bar{V}_c(x, a)=H^{c,A}_{0-} \leq H^{c,A}_0 \leq \E_{x,a}\left(H^{c,A}_{t\wedge \tau^A}\right),
$$
for all $t\geq0$. As just above, we get by dominated and monotone convergence that
\begin{equation}
\bar{V}_c(x, a)\leq\lim_{t\rightarrow\infty}\E_{x,a}\left(H^{c,A}_{t\wedge\tau^A}\right)=\E_{x,a}\left(\tau^A+ cN_{\tau^A}(A)\right)\label{inegalite_vbar}.
\end{equation}
If $J_c(x,a,A)=+\infty$, it is then clear that $\bar{V}_c(x, a)<J_c(x,a,A)$. Combining~(\ref{egalite_vbar}) and~(\ref{inegalite_vbar}), we obtain
$$\bar{V}_c(x, a)=\inf_{A\in\mathcal{A}} \E_{x,a}(\tau^A+  cN_{\tau^A}(A))=V_c(x,a),$$
where the infimum is attained by the control $A^c$. This proves the optimality of the strategy $A^c$ in the case where $0<c\leq c^*(\mu)$.

Let $c^*(\mu)\leq c$. Then
$$V_{c^*}(x,a)=\inf_{A\in\mathcal{A}}\E_{x,a}\left(\tau^A+c^*N_{\tau^A}(A)\right)\leq \inf_{A\in\mathcal{A}}\E_{x,a}\left(\tau^A+cN_{\tau^A}(A)\right)=V_c(x,a),$$
by definition of the value function. Moreover, if $\tilde{A}$ denotes the constant strategy, then
$$J_{c^*}(x,a,\tilde{A})=J_c(x,a,\tilde{A})=\E_{x,a}(\tau^{\tilde{A}}),$$
and, again by definition of the value function,
$V_c(x,a)\leq J_c(x,a,\tilde{A}).$
Finally, by~(\ref{eq:sol:FBP:c*:V}) and by the first part of the proof, we know that
$V_{c^*}(x,a)=\bar{V}_{c^*}(x,a)=J_{c^*}(x,a,\tilde{A}).$
Hence, $V_{c^*}(x,a)=V_c(x,a)$ and the strategy $\tilde{A}$ is optimal for all $c\geq c^*(\mu)$.

Suppose now that $c>c^*(\mu)$ and that there exists another optimal strategy $\bar{A}\in\mathcal{A}$ such that there exists $(x, a)\in[0,1]\times\{-1,1\}$ with $\Prob_{x,a}(N_{\tau^{\bar{A}}}(\bar{A})>0)>0$. Then
$$\E_{x,a}\left(\tau^{\bar{A}}+cN_{\tau^{\bar{A}}}(\bar{A})\right)>\E_{x,a}\left(\tau^{\bar{A}}+c^*N_{\tau^{\bar{A}}}(\bar{A})\right)\geq V_{c^*}(x,a)=V_c(x,a),$$
which contradicts the optimality hypothesis. This shows that if $c>c^*(\mu)$, then $\tilde{A}$ is the unique optimal strategy.
\end{proof}

\subsection{Free boundary problem for the maximization problem}
The resolution of the free boundary problem \eqref{eq:FBP:Wc}, as well as the proof of the optimality of the candidate control, are similar to what we have already done in the minimization problem; we will however highlight the places where the computations differ.

\begin{proof}[Proof of Proposition~\ref{prop:sol:FBP:max}]
Let $0<c<c^*(\mu)$. The general the solution to~(\ref{eq:continuation:max}) is the same as for~(\ref{eq:ODE:continuation}), and since~(\ref{eq:continuation:max}) is satisfied in the two intervals $[0,a^{\text{max}}_c[$ and $]b^{\text{max}}_c,1]$, there are four constants to determine, which are reduced to two by the boundary conditions~(\ref{eq:boundary:condit:max}). Then,~(\ref{eq:switching:max}) and (\ref{eq:symmetry:max}) give the form of $\bar{V}^{\text{max}}_c(x,1)$ given in~(\ref{eq:sol:FBP:max:c<c*}). The four unknowns $\delta_c, \gamma_c, a^{\text{max}}_c$ and $b^{\text{max}}_c$ have to be determined using~(\ref{eq:cond:fit1:max})--(\ref{eq:smooth:fit2:max}). Using~(\ref{eq:sol:FBP:max:c<c*}), these equations give us, after some simplifications, in the same the order:
\begin{align}
\tfrac{1-2a^{\text{max}}_c}{\mu}+ \gamma_c \left(e^{-2\mu a^{\text{max}}_c}-1\right)&=\gamma_c \left(e^{-2\mu(1- a^{\text{max}}_c)}-1\right)- c,\label{eq:resolution:max1}\\
\gamma_c \left(e^{-2\mu (1-b^{\text{max}}_c)}-1\right)- c&=\tfrac{2-2b^{\text{max}}_c}{\mu}+ \delta_c \left(e^{2\mu(1-b^{\text{max}}_c)}-1\right),\label{eq:resolution:max2}\\
2 \mu\gamma_c \left(e^{-2\mu(1- a^{\text{max}}_c)} + e^{-2\mu a^{\text{max}}_c}\right)+  \tfrac{2}{\mu}&=0,\label{eq:resolution:max3}\\
2 \mu\gamma_c e^{-2\mu(1- b^{\text{max}}_c)}+2\mu\delta_c e^{2\mu (1-b^{\text{max}}_c)} + \tfrac{2}{\mu}&=0.\label{eq:resolution:max4}
\end{align}
Observe that setting $\gamma_c e^{-2\mu}=\alpha_c$ and $1-a^{\text{max}}_c=b_c$, the equations (\ref{eq:resolution:max1}) and (\ref{eq:resolution:max3}) become identical to~(\ref{eq:resolutionb}) and (\ref{eq:resolutiond}), respectively. In the proof of Proposition~\ref{prop:sol:FBP:min}, we have already established that~(\ref{eq:resolutionb}) and (\ref{eq:resolutiond}) have a unique solution $\alpha_c$ and $0<b_c<\frac{1}{2}$. Thus, it remains only to prove the existence of $\delta_c$ and $b^{\text{max}}_c$ solution of (\ref{eq:resolution:max2}) and (\ref{eq:resolution:max4}) such that $\delta_c$ satisfies~(\ref{eq:delta}) and $1-a_c<b^{\text{max}}_c<1$.

Equation~(\ref{eq:delta}) follows directly from~(\ref{eq:resolution:max4}). Substituting~(\ref{eq:delta}) into (\ref{eq:resolution:max2}), multiplying by $\mu^2$ and rearranging terms, we get~(\ref{eq:g2}). Now set
\begin{equation}\label{eq:def:hctilde:max}
\tilde{h}_c^{\text{max}}(s)= \gamma_c\mu^2e^{-2s} +e^{-s}\left(1-2\gamma_c\mu^2\right)+ s-1+\gamma_c\mu^2+c\mu^2,
\end{equation}
so that~(\ref{eq:g2}) is equivalent to $\tilde{h}_c^{\text{max}}(2\mu(1- b^{\text{max}}_c))=0$. Observe that $\tilde{h}_c^{\text{max}}(0)=c\mu^2$ and, by~(\ref{eq:alpha}), that
\begin{align*}
\tilde{h}_c^{\text{max}}(2\mu b_c)=&\,-\frac{e^\mu}{2\cosh(2\mu b_c-\mu)} e^{-4\mu b_c}  + e^{-2\mu b_c}\left(1+\frac{e^\mu}{\cosh(2\mu b_c-\mu)}\right)+2\mu b_c-1  \\
 &\qquad\qquad -\frac{e^\mu}{2\cosh(2\mu b_c-\mu)}+c\mu^2\\
=&\,-\frac{\sinh(2\mu b_c-\mu)}{\cosh(2\mu b_c-\mu)} -\frac{\sinh(\mu)}{\cosh(2\mu b_c-\mu)}+2\mu b_c+c\mu^2\\
=&\,\mu-\frac{\sinh(\mu)}{\cosh(2\mu b_c-\mu)},
\end{align*}
where we used~(\ref{eq:t0:sinh:cosh}) with $t_c=2\mu b_c-\mu$ to get the last equality. Looking back to~(\ref{eq:htilde:tc+mu}) and (\ref{eq:htilde:tc+mu:positive}), we see that
$$\tilde{h}_c^{\text{max}}(2\mu b_c)=\mu-\frac{\sinh(\mu)}{\cosh(2\mu b_c-\mu)}=-\tilde{h}_c(2\mu b_c)<0.$$
Furthermore, from~(\ref{eq:def:hctilde:max}), we see that the derivative of $\tilde{h}_c^{\text{max}}$ vanishes twice: at $0$ and at $\log(-2\gamma_c\mu^2)$ (compare also with~(\ref{eq:def:mtilde})). By~(\ref{eq:alpha}), we have that
$$\log\left(-2\gamma_c\mu^2\right)=\log\left(-2\alpha_c e^{2\mu}\mu^2\right)=\log\left(\frac{e^\mu}{\cosh(2\mu b_c-\mu)}\right)=\mu-\log(\cosh(2\mu b_c-\mu)).$$
Since $2\mu b_c-\mu<0$ because $b_c\in[0,\frac{1}{2}[$ by Proposition~\ref{prop:sol:FBP:min}, we have that $-\log(\cosh(2\mu b_c-\mu))>2\mu b_c-\mu$ which is equivalent by the preceding to $2\mu b_c<\log(-2\gamma_c\mu^2)$. Therefore, the function $\tilde{h}_c^{\text{max}}$ is monotone decreasing on $]0,2\mu b_c[$ and vanishes only once on this interval, at a value which we denote $2\mu(1-b^{\text{max}}_c)$. We have thus established the existence of a number $b^{\text{max}}_c\in\,]1-b_c,1[$ which is the unique solution of~(\ref{eq:g2}).

Finally, we check that $b^{\text{max}}_c>1-a_c$. This is clearly equivalent to showing that $2\mu(1-b^{\text{max}}_c)<2\mu a_c$, where $2\mu a_c=s_c$ is the unique solution of $\tilde{h}_c(s)=0$ on $]0,t_c+\mu[$ (see (\ref{eq:hctildes=0})). We have just shown that $\tilde{h}_c^{\text{max}}$ is strictly decreasing on $[0,t_c+\mu]$. The function $\tilde{h}_c$ is strictly increasing on $[0,t_c+\mu]$ (see the end of the proof of Proposition~\ref{prop:sol:FBP:min}). Thus, in order to prove that $2\mu(1-b^{\text{max}}_c)<s_c$, it suffices to show that $\tilde{h}_c(s)<-\tilde{h}_c^{\text{max}}(s)$ for all $s\in\,]0,t_c+\mu[\,$. We have
\begin{align*}
\tilde{h}_c(s)+\tilde{h}_c^{\text{max}}(s)&=\mu^2\alpha_c \left(e^{2 s}-2e^s+1+e^{2\mu}+e^{2\mu}e^{-2s}-2e^{2\mu}e^{-s}\right)+e^s+e^{-s}-2\\
&=\mu^2\alpha_c (e^s+e^{-s}-2)(e^s+e^{-s+2\mu})+e^s+e^{-s}-2\\
&=\mu^2\alpha_c\left(4\sinh^2\left(\tfrac{s}{2}\right)\right) \left(2e^\mu\cosh(s-\mu)\right)+4\sinh^2\left(\tfrac{s}{2}\right),
\end{align*}
which is strictly negative on $]0,t_c+\mu[$ since by~(\ref{eq:alpha}) and the fact that $-\mu<t_c<0$,
$$2\mu^2\alpha_c e^\mu\cosh(s-\mu)+1=1-\frac{\cosh(s-\mu)}{\cosh(t_c)}\,<0,\qquad \text{ for all } s\in\,]0,t_c+\mu[\,. $$
The proof of part 2 of Proposition~\ref{prop:sol:FBP:max} is complete. We note for future reference that
\begin{equation}\label{3.53a}
   2\mu(1- b^{\text{max}}_c) < t_c + \mu.
\end{equation}

Let us now consider the case $c=c^*(\mu)$. By the preceding, we have immediately that $a^{\text{max}}_{c^*}=1-b_{c^*}$
and that
$\gamma_{c^*}=\alpha_{c^*}e^{2\mu}.$
It remains to see that $b^{\text{max}}_{c^*}=a^{\text{max}}_{c^*}$ or, equivalently, that $2\mu(1-a^{\text{max}}_{c^*})$ is a solution of $\tilde{h}_{c^*}^{\text{max}}(s)=0$. Using the formula for $c^*$ in~(\ref{eq:def:c*}) and for $\tilde{h}_{c^*}^{\text{max}}$ in~(\ref{eq:def:hctilde:max}), the expression of $\alpha_{c^*}$ in~(\ref{eq:alpha*}) and the formula for $a^{\text{max}}_{c^*}=1-b_{c^*}$ in~(\ref{eq:ac*=bc*}) via either~(\ref{eq_def_tc*_1}) or~(\ref{eq_def_tc*_2}),  we find that
\begin{align*}
\tilde{h}_{c^*}^{\text{max}}(2\mu(1-a^{\text{max}}_{c^*}))=&\,\tilde{h}_{c^*}^{\text{max}}(t_{c^*}+\mu)\\
=&\,\gamma_{c^*}\mu^2e^{-2(t_{c^*}+\mu)}+ e^{-(t_{c^*}+\mu)}\left(1-2\gamma_{c^*}\mu^2\right)+ t_{c^*}+\mu -1 + \gamma_{c^*}\mu^2 + c^*\mu^2 \\
=&\,\gamma_{c^*}\mu^2\left(e^{-(t_{c^*}+\mu)}-1\right)^2+e^{-(t_{c^*}+\mu)} +t_{c^*}+\mu-1+c^*\mu^2\\
=&\,-\tfrac{\mu e^{\mu}}{2\sinh(\mu)}\left(e^{-\mu}\tfrac{\mu}{\sinh(\mu)}\tfrac{1}{1-\sqrt{1-\tfrac{\mu^2}{\sinh^2(\mu)}}}  -1\right)^2 +e^{-\mu}\tfrac{\mu}{\sinh(\mu)}\tfrac{1}{1-\sqrt{1-\tfrac{\mu^2}{\sinh^2(\mu)}}}\\
&\qquad -\sqrt{1-\tfrac{\mu^2}{\sinh^2(\mu)}}+\mu-1\\
=&\,-\tfrac{\mu e^{-\mu}}{2\sinh(\mu)}\tfrac{\mu^2}{\sinh^2(\mu)}\tfrac{1+2\sqrt{1-\tfrac{\mu^2}{\sinh^2(\mu)}}+1-\tfrac{\mu^2}{\sinh^2(\mu)}}{\left(\tfrac{\mu^2}{\sinh^2(\mu)}\right)^2}+ \tfrac{\mu^2}{\sinh^2(\mu)}\tfrac{1+\sqrt{1-\tfrac{\mu^2}{\sinh^2(\mu)}}}{\tfrac{\mu^2}{\sinh^2(\mu)}}\\
&\qquad -\tfrac{\mu e^{\mu}}{2\sinh(\mu)}+\tfrac{\mu e^{-\mu}}{\sinh(\mu)} \tfrac{1+\sqrt{1-\tfrac{\mu^2}{\sinh^2(\mu)}}}{\tfrac{\mu^2}{\sinh^2(\mu)}}-\sqrt{1-\tfrac{\mu^2}{\sinh^2(\mu)}}+\mu-1\\
=&\,-\tfrac{\mu e^{\mu}}{2\sinh(\mu)}+\tfrac{\mu e^{-\mu}}{2\sinh(\mu)}+\mu\\
=&\,0.
\end{align*}
Therefore,  $a^{\text{max}}_{c^*}=b^{\text{max}}_{c^*}$ and by~(\ref{eq:delta}), the last parameter $\delta_{c^*}$ is given by
$$
\delta_{c^*}=-\tfrac{1}{\mu^2}\,e^{-2\mu(1-b^{\text{max}}_{c^*})}-\gamma_c e^{-4\mu(1-b^{\text{max}}_{c^*})}=-\tfrac{1}{\mu^2}\,e^{-(t_{c^*}+\mu)}-\gamma_{c^*} e^{-2(t_{c^*}+\mu)}=-\tfrac{e^{-\mu}}{2\mu\sinh(\mu)}=\alpha_{c^*}
$$
(for the third equality, use \eqref{eq_def_tc*_2}). Replacing the value of $\gamma_{c^*}$ and $\alpha_{c^*}$ in the general expression for $\bar{V}^{\text{max}}_{c^*}$ given in~(\ref{eq:sol:FBP:max:c<c*}), we finally obtain as in~(\ref{eq:value:barVc*}) that, for all $x\in[0,1]$,
\begin{eqnarray}
\bar{V}^{\text{max}}_{c^*}(x,1)=\frac{-x}{\mu}+\frac{1-e^{-2\mu x}}{\mu(1-e^{-2\mu})}=\bar{V}_{c^*}(x,1)
\end{eqnarray}
and $\bar{V}^{\text{max}}_{c^*}(x,-1)=\bar{V}^{\text{max}}_{c^*}(1-x,1)$ from~(\ref{eq:symmetry:max}). This completes the proof.
\end{proof}

\subsection{Proof of optimality for the maximization problem}

Now that we have the solution of the free boundary problem~(\ref{eq:FBP:Wc}), we shall prove that $\bar{V}^{\text{max}}_c$ is equal to the value function $V^{\text{max}}_c$ and that $G^c$ is an optimal control. This will be similar to the proof of optimality in the minimization problem.

\begin{proof}[Proof of Proposition~\ref{prop_esperance_tauGc}.]
The proof is the same as for Proposition~\ref{prop:esperance:tauAc}.
\end{proof}

\begin{Lem}\label{lem:generator:Wbar}
Let ${\mathbb L}_a$ denote the operator defined in~(\ref{eq:def:generator}). Then, for all $0<c<c^*(\mu)$,
\begin{align*}
1+{\mathbb L}_a \bar{V}^{\text{max}}_c(x, a)&=0,     \qquad\mbox{ for all } x \in  C_a^{\text{max}},\\
1+{\mathbb L}_a \bar{V}^{\text{max}}_c(x, a)&<0,     \qquad\mbox{ for all } x \in  D_a^{\text{max}}\setminus\partial D_a^{\text{max}}.
\end{align*}
Moreover, if $c=c^*(\mu)$, then
$$1+{\mathbb L}_a \bar{V}^{\text{max}}_{c^*}(x, a)=0,     \qquad\mbox{ for all } x \in  [0,1].$$
\end{Lem}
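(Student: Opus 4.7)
The plan is to mimic the proof of Lemma~\ref{lem:generator:Vbar}, exploiting an analogous inclusion between the switching and continuation regions but with the resulting sign reversed. For $c = c^*(\mu)$, Proposition~\ref{prop:sol:FBP:max} identifies $\bar{V}^{\text{max}}_{c^*}(x, a)$ with $f^{a\mu}(x)$, and $1 + \mathbb{L}_a f^{a\mu} = 0$ on $[0,1]$ follows either from the probabilistic meaning of $f^{a\mu}$ as an expected exit time or by direct computation from \eqref{eq:def:fnu}. For $0 < c < c^*(\mu)$ and $a = 1$ on $C_1^{\text{max}}$, the stated equality is just the defining ODE \eqref{eq:continuation:max}; throughout, the $a = -1$ case will be deduced from the $a = 1$ case via the symmetry \eqref{eq:symmetry:max}, together with the identities $1 - C_{-1}^{\text{max}} = C_1^{\text{max}}$, $1 - D_{-1}^{\text{max}} = D_1^{\text{max}}$ and $\mathbb{L}_{-1}[g(1-\cdot)](x) = (\mathbb{L}_1 g)(1-x)$.

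The substantive step is the strict inequality on $(a^{\text{max}}_c, b^{\text{max}}_c)$. I would first use \eqref{eq:switching:max} to write $\bar{V}^{\text{max}}_c(x, 1) = \bar{V}^{\text{max}}_c(x, -1) - c$ on $D_1^{\text{max}}$, so that $\mathbb{L}_1 \bar{V}^{\text{max}}_c(x, 1) = \mathbb{L}_1 \bar{V}^{\text{max}}_c(x, -1)$. The key geometric observation is the inclusion $D_1^{\text{max}} \subset C_{-1}^{\text{max}}$: since $a^{\text{max}}_c > 1/2$, any $x \in D_1^{\text{max}}$ satisfies $x \geq a^{\text{max}}_c > 1 - a^{\text{max}}_c$ and so lies in $(1 - a^{\text{max}}_c, 1] \subset C_{-1}^{\text{max}}$. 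The ODE therefore gives $\mathbb{L}_{-1} \bar{V}^{\text{max}}_c(x, -1) = -1$ on $D_1^{\text{max}}$, and subtracting from the previous identity yields
\[
1 + \mathbb{L}_1 \bar{V}^{\text{max}}_c(x, 1) = 2\mu\,\frac{\partial \bar{V}^{\text{max}}_c}{\partial x}(x, -1),
\]
reducing the task to showing that $\partial_x \bar{V}^{\text{max}}_c(x, -1)$ is strictly negative on $(a^{\text{max}}_c, b^{\text{max}}_c)$. Note that this sign is opposite to the one appearing in the minimization problem, reflecting the fact that the max value function is largest near the middle of $[0,1]$ and decreases toward the upper endpoint.

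For the sign check, I would apply \eqref{eq:symmetry:max} to write $\bar{V}^{\text{max}}_c(x, -1) = \bar{V}^{\text{max}}_c(1-x, 1)$. For $x \in (a^{\text{max}}_c, b^{\text{max}}_c)$, the point $y = 1-x$ lies in $(1-b^{\text{max}}_c, b_c)$ (using $a^{\text{max}}_c = 1 - b_c$), and this interval is contained in $[0, a^{\text{max}}_c) = [0, 1-b_c)$ since $b_c < 1/2$. Thus the first branch of \eqref{eq:sol:FBP:max:c<c*} applies, and a direct differentiation together with $\gamma_c = \alpha_c e^{2\mu}$ and the formula \eqref{eq:alpha} for $\alpha_c$ reduces the required inequality to
\[
e^{2\mu x - \mu} > \cosh(2\mu b_c - \mu).
\]
With $t_c = 2\mu b_c - \mu \in (-\mu, 0)$ from the proof of Proposition~\ref{prop:sol:FBP:min}, the left-hand side is increasing in $x$ and equals $e^{-t_c}$ at $x = a^{\text{max}}_c = 1 - b_c$, whereas $\cosh(t_c) < e^{-t_c}$ since $t_c < 0$ implies $e^{t_c} < e^{-t_c}$. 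Strict inequality therefore holds throughout the open interval.

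I expect the main conceptual hurdle to be recognizing the sign reversal relative to Lemma~\ref{lem:generator:Vbar} and then carefully tracking which branch of \eqref{eq:sol:FBP:max:c<c*} is in force after the symmetry substitution. Once these points are clear, the computation is routine and closely parallels the minimization case.
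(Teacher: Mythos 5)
Your proposal is correct and follows essentially the same route as the paper: on the interior of $D_1^{\text{max}}$ you derive $1+\mathbb{L}_1\bar{V}^{\text{max}}_c(x,1)=2\mu\,\partial_x\bar{V}^{\text{max}}_c(x,-1)$ exactly as in \eqref{3.38a}, identify via the symmetry that the first branch of \eqref{eq:sol:FBP:max:c<c*} is in force, and reduce the sign to $e^{2\mu x-\mu}>\cosh(t_c)$, which is the same inequality the paper evaluates (there written as $1+2\mu^2\gamma_c e^{-2\mu(1-x)}<\tanh(t_c)<0$). The branch-tracking and the evaluation at $x=a^{\text{max}}_c=1-b_c$ giving $e^{-t_c}>\cosh(t_c)$ are both correct.
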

\begin{proof}
The proof follows the same steps as for Lemma~\ref{lem:generator:Vbar}. In particular, for $x\in\,]a^{\text{max}}_c,b^{\text{max}}_c [\,$, as in \eqref{3.38a}, $1+{\mathbb L}_1 \bar{V}^{\text{max}}_c(x, 1) = 2 \mu \,\frac{\partial \bar{V}^{\text{max}}_c}{\partial x}(x,-1)$ and $1-x\in\,]1-b^{\text{max}}_c,1-a^{\text{max}}_c [\,\subset[0,a^{\text{max}}_c]$, and so
$$\bar{V}^{\text{max}}_c(x, -1)=\bar{V}^{\text{max}}_c(1-x, 1) =-\tfrac{1-x}{\mu}+\gamma_c\left(e^{-2\mu(1-x)}-1\right).$$
Therefore,
$$
\mu\frac{\partial\bar{V}^{\text{max}}_c}{\partial x}(x, -1)=1+ 2\mu^2\gamma_c e^{-2\mu(1-x)}<1+ 2\mu^2\alpha_c e^{2\mu a^{\text{max}}_c }= 1-\tfrac{e^{-\mu}}{\cosh(t_c)}e^{-t_c+\mu }=\tanh(t_c)<0,
$$
where we have used the equalities $\gamma_c=\alpha_ce^{2\mu}<0$, $a^{\text{max}}_c=1-b_c$, (\ref{eq:alpha:bis}) and $2\mu b_c=t_c+\mu$, and the fact that $t_c<0$. This proves Lemma~\ref{lem:generator:Wbar}.
\end{proof}

\begin{Lem}\label{lem:ineq:Wbar}
For all $0<c\leq c^*(\mu)$ and any $a\in\{-1,1\}$, if $x\in C_a^{\text{max}}$, then
$$\bar{V}^{\text{max}}_c(x,a)>\bar{V}^{\text{max}}_c(x,-a)-c.$$
\end{Lem}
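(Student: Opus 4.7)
The proof parallels that of Lemma~\ref{lem:ineq:Vbar}. By~(\ref{eq:symmetry:max}) it suffices to treat $a=1$; set $d(x) := c + \bar V^{\text{max}}_c(x,1) - \bar V^{\text{max}}_c(x,-1)$, so the claim becomes $d(x) > 0$ for every $x \in C_1^{\text{max}}$. The identity $d(x) + d(1-x) = 2c$, immediate from~(\ref{eq:symmetry:max}), will be used repeatedly. When $c = c^*(\mu)$, Proposition~\ref{prop:sol:FBP:max} gives $\bar V^{\text{max}}_{c^*}(x,a) = f^{a\mu}(x)$, and combining $f^{-\mu}(x) = f^\mu(1-x)$ with~(\ref{eq:ac*:argmax}) yields $\max_{x\in[0,1]}(f^{-\mu}(x)-f^\mu(x)) = c^*(\mu)$, attained only at $x = 1-a_{c^*}$; since $D_1^{\text{max}} = \{1-a_{c^*}\}$, this point is excluded from $C_1^{\text{max}}$, so the inequality is strict.

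For $0 < c < c^*(\mu)$, partition $C_1^{\text{max}}$ according to where $1-x$ falls in the piecewise description~(\ref{eq:sol:FBP:max:c<c*}) of $\bar V^{\text{max}}_c(\cdot,1)$. On the intermediate band $x \in [1-b^{\text{max}}_c, 1-a^{\text{max}}_c]$ one has $1-x \in D_1^{\text{max}}$, and~(\ref{eq:switching:max}) combined with~(\ref{eq:symmetry:max}) gives $\bar V^{\text{max}}_c(x,1) = \bar V^{\text{max}}_c(x,-1) + c$, so $d \equiv 2c$. On the central region $x \in\,]1-a^{\text{max}}_c, a^{\text{max}}_c[$, both $x$ and $1-x$ lie in $[0,a^{\text{max}}_c[$, hence
\[
d(x) = c + \tfrac{1-2x}{\mu} + \gamma_c\bigl(e^{-2\mu x}-e^{-2\mu(1-x)}\bigr),
\]
with boundary values $d(1-a^{\text{max}}_c) = 2c$ (matching the intermediate band) and $d(a^{\text{max}}_c) = 0$ (matching $D_1^{\text{max}}$); the latter is verified by substituting $a^{\text{max}}_c = 1-b_c$ and $\gamma_c = \alpha_c e^{2\mu}$ into~(\ref{eq:resolutionb}). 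The equation $d'(x) = 0$, after multiplication by $u := e^{-2\mu x}$, becomes $\mu^2\gamma_c u^2 + u + \mu^2\gamma_c e^{-2\mu} = 0$, a quadratic whose two roots have product $e^{-2\mu}$; smooth fit~(\ref{eq:smooth:fit1:max}) at $a^{\text{max}}_c$ (together with the smoothness of $\bar V^{\text{max}}_c(\cdot,-1)$ at that point, coming from $1-a^{\text{max}}_c = b_c \in [0,a^{\text{max}}_c[$) pins one root at $e^{-2\mu a^{\text{max}}_c}$, so the other is $e^{-2\mu(1-a^{\text{max}}_c)}$. Therefore $d'$ has constant sign on $]1-a^{\text{max}}_c, a^{\text{max}}_c[$, and an explicit computation using~(\ref{eq:alpha}) gives $d'(\tfrac12) = \tfrac{2}{\mu}\bigl(\tfrac{1}{\cosh(t_c)}-1\bigr) < 0$, so $d$ is strictly decreasing from $2c$ to $0$, hence strictly positive on the open interval.

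The two remaining regions $x \in [0, 1-b^{\text{max}}_c[$ and $x \in\,]b^{\text{max}}_c, 1]$ are dual. On the first,
\[
d(x) = c - \tfrac{2x}{\mu} + \gamma_c\bigl(e^{-2\mu x}-1\bigr) - \delta_c\bigl(e^{2\mu x}-1\bigr),
\]
with $d(0) = c$ and $d(1-b^{\text{max}}_c) = 2c$ by continuity with the intermediate band. The derivative $d'$ is again quadratic in $e^{2\mu x}$, and smoothness of $d$ across $1-b^{\text{max}}_c$ (obtained by transferring~(\ref{eq:smooth:fit2:max}) through the symmetry to $\bar V^{\text{max}}_c(\cdot,-1)$) pins one critical point at $1-b^{\text{max}}_c$; using the sign of $\delta_c$ supplied by~(\ref{eq:resolution:max4}) together with the explicit formula for the second root, the latter lies outside $[0, 1-b^{\text{max}}_c]$, so $d$ is monotone on this interval and $d \in [c, 2c]$. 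The second region then follows from $d(x) = 2c - d(1-x)$ together with the strict inequality $d(1-x) < 2c$ just obtained. The main obstacle is this last step: determining the sign of $\delta_c$ and locating the second critical point is more intricate than in Lemma~\ref{lem:ineq:Vbar}, since $\gamma_c$ and $\delta_c$ do not satisfy a relation as direct as~(\ref{eq:beta}) and must be unwound from~(\ref{eq:resolution:max2})--(\ref{eq:resolution:max4}).
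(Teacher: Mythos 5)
Your overall architecture (reduce to $a=1$, set $d(x)=c+\bar V^{\text{max}}_c(x,1)-\bar V^{\text{max}}_c(x,-1)$, treat the critical case via~(\ref{eq:ac*:argmax}), and split $C_1^{\text{max}}$ by where $1-x$ falls) matches the paper's, and your treatment of the central region $]1-a^{\text{max}}_c,a^{\text{max}}_c[$ is a correct self-contained alternative: the paper instead recognizes $d$ there as $2c-d_2(x)$ with $d_2$ from Lemma~\ref{lem:ineq:Vbar} and reuses its monotonicity, whereas you re-derive monotonicity from the symmetry of the critical points about $\tfrac12$ and the sign of $d'(\tfrac12)$; both work, and yours avoids importing the minimization lemma.

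However, there is a genuine gap exactly where you flag ``the main obstacle'': on $[0,1-b^{\text{max}}_c[$ you assert, without proof, that the second critical point of $d$ lies outside the interval, and your proposed route is not the right one. Equation~(\ref{eq:resolution:max4}) does \emph{not} supply the sign of $\delta_c$ --- it only expresses $\delta_c$ in terms of $\gamma_c$ and $b^{\text{max}}_c$ (this is~(\ref{eq:delta})), and in fact the sign of $\delta_c$ is not determined: the paper must treat the three cases $\delta_c>0$, $\delta_c<0$, $\delta_c=0$ separately. The substantive input you are missing is the following. When $\delta_c<0$ both roots $y_1^-<y_1^+$ of $d'$ are real, and one must show $1-b^{\text{max}}_c=y_1^-$ (the \emph{smaller} root), so that $]0,1-b^{\text{max}}_c[$ contains no critical point; if it were $y_1^+$, the interior critical point $y_1^-$ could be a local minimum of $d$ below $c$. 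The paper proves $1-b^{\text{max}}_c=y_1^-$ by reducing it, via~(\ref{eq:delta}) and~(\ref{eq:alpha:bis}), to the inequality $\cosh(t_c)<e^{\mu-2\mu(1-b^{\text{max}}_c)}$, which in turn needs $\cosh(t_c)<e^{-t_c}$ and the bound $2\mu(1-b^{\text{max}}_c)<t_c+\mu$ recorded as~(\ref{3.53a}) at the end of the proof of Proposition~\ref{prop:sol:FBP:max}. (When $\delta_c>0$ the quadratic has a single admissible root, which is then the global minimum of $-d$, and $d(0)=c>0$ suffices; when $\delta_c=0$ the double root sits at $1-b^{\text{max}}_c$.) Without this step the conclusion $d\in[c,2c]$ on $[0,1-b^{\text{max}}_c]$, and hence also the dual region $]b^{\text{max}}_c,1]$ via $d(x)=2c-d(1-x)$, is not established.
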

\begin{proof}
We compare the left- and right-hand sides of this inequality on a case by case basis as in the proof of Lemma~\ref{lem:ineq:Vbar}. In the case where $c=c^*(\mu)$, the result follows immediately from~(\ref{eq:sol:FBP:c*:W}) together with~(\ref{eq:ac*:argmax}). Assume $0<c<c^*(\mu)$ and, without loss of generality, that $a=1$.
\vskip 10pt

   {\em Case 1.} If $x\in[0, 1-b^{\text{max}}_c[$, then $1-x\in\,]b^{\text{max}}_c,1]$ and according to~(\ref{eq:symmetry:max}) and~(\ref{eq:sol:FBP:max:c<c*}),
$$
\bar{V}^{\text{max}}_c(x,-1)-\bar{V}^{\text{max}}_c(x,1)-c= e_1(x),
$$
where
\begin{equation}\label{3.55a}
 e_1(x):= \tfrac{2x}{\mu} +\delta_c\left(e^{2\mu x}-1\right) -\gamma_c\left(e^{-2\mu x}-1\right)-c.
\end{equation}
The sign of the derivative $e_1'(x) = 2\mu e^{-2\mu x}(\delta_c e^{4\mu x} + \mu^{-2} e^{2\mu x} + \gamma_c)$ is determined by a quadratic function of $e^{2\mu x}$, which vanishes at most twice on $\R$, at $y_1^{\pm}$, which are given by the formula~(\ref{eq:def:roots:x1pm}) for $x_1^{\pm}$, but with $\alpha_c$ replaced by $\delta_c$ and $\beta_c$ replaced by $\gamma_c$. When $\delta_c \neq 0$, the discriminant $\Delta_1=\frac{1}{4\mu^4\delta_c^2}-\frac{\gamma_c}{\delta_c}$ is positive since
$$\Delta_1\geq0 \;\Leftrightarrow\; 1-4\mu^4\delta_c\gamma_c\geq0 \;\Leftrightarrow\; \left(1+2\mu^2\gamma_c e^{-2\mu(1-b^{\text{max}}_c)}\right)^2\geq0,
$$
by~(\ref{eq:delta}). Using again \eqref{eq:delta}, we see that $e_1'(1-b^{\text{max}}_c)=0$ and so $1-b^{\text{max}}_c\in\{y_1^\pm\}$.
If $\delta_c>0$, then $\sqrt{\Delta_1}>\frac{1}{2\mu^2\delta_c}$ (recall that $\gamma_c <0$) and $y_1^+$ is the only root of $e_1'$. The point $1-b^{\text{max}}_c=y_1^+$ is thus the global minimum of the function $e_1$ and since $e_1(0)=-c<0$, we have $e_1(x)<0$ for all $x\in[0,1-b^{\text{max}}_c[\,$.
If $\delta_c<0$, then $e_1'$ has two distinct real roots $y_1^- < y_1^+$ such that $2\mu y_1^-<\log\left(-\frac{1}{2\mu^2\delta_c}\right)<2\mu y_1^+$.  Using \eqref{eq:delta} and \eqref{eq:alpha:bis}, we see that
$$
   1-b^{\text{max}}_c=y_1^-  \quad\Leftrightarrow\quad  e^{-2\mu(1-b^{\text{max}}_c)}>-2\mu^2\delta_c \quad\Leftrightarrow\quad \cosh(t_c)<e^{\mu-2\mu(1-b^{\text{max}}_c)}.
$$
The last inequality is satisfied since $\cosh(t_c) < e^{-t_c}$ because $t_c <0$, and $2\mu(1-b^{\text{max}}_c)\in\,]0,\mu+t_c[$ by \eqref{3.53a}. Thus, $e_1$ is strictly decreasing on $[0,1-b^{\text{max}}_c[$ with $e_1(0)=-c<0$ and so $e_1(x)<0$ for all $x\in[0,1-b^{\text{max}}_c[\,$. Finally, if $\delta_c=0$, then $y_1^+=y_1^-=1-b^{\text{max}}_c$ and $e_1'(x) <0$ if and only if $x \in\,]0,1-b^{\text{max}}_c[$, and so $e_1(x)<0$ for all $x\in[0,1-b^{\text{max}}_c[\,$.
Therefore, for all possible values of $\delta_c$, we have shown that $e_1(x) <0$ for all $x \in [0,b^{\text{max}}_c[$.
\vskip 10pt

   {\em Case 2.} If $x\in[1-b^{\text{max}}_c, 1-a^{\text{max}}_c]$, then $1-x\in\,[a^{\text{max}}_c,b^{\text{max}}_c]=D_1^{\text{max}}$ and by~(\ref{eq:symmetry:max}) and~(\ref{eq:switching:max}),
$$\bar{V}^{\text{max}}_c(x,-1)-\bar{V}^{\text{max}}_c(x,1)-c= \bar{V}^{\text{max}}_c(1-x,1)-\bar{V}^{\text{max}}_c(x,1)-c=-2 c<0.$$
\vskip 10pt

   {\em Case 3.} If $x\in\,]1-a^{\text{max}}_c, a^{\text{max}}_c[$, then $1-x\in\,]1-a^{\text{max}}_c, a^{\text{max}}_c[$ and by \eqref{eq:sol:FBP:max:c<c*},
$$
\bar{V}^{\text{max}}_c(x,-1)-\bar{V}^{\text{max}}_c(x,1)-c= -c +\tfrac{2x-1}{\mu}+ \gamma_c\left(e^{2\mu (x-1)}-e^{-2\mu x}\right)=-2c+d_2(x),
$$
where $d_2(x)$ is given by~(\ref{eq:d2}). Since $a^{\text{max}}_c=1-b_c$ and since we have seen that $d_2(\cdot)$ is strictly increasing on $]b_c,1-b_c[$ with $d_2(b_c)=0$, and $d_2(1-b_c)=2c$ by \eqref{e3.41a}, we have that $-2c+d_2(x)<0$ for all $x\in\,]1-a^{\text{max}}_c, a^{\text{max}}_c[\,$.
\vskip 10pt

   {\em Case 4.} If $x\in\,]b^{\text{max}}_c,1]$, then $1-x\in [0,1-b^{\text{max}}_c[$ and by \eqref{eq:sol:FBP:max:c<c*},
$$
\bar{V}^{\text{max}}_c(x,-1)-\bar{V}^{\text{max}}_c(x,1)-c=-e_1(1-x)-2c,
$$
where $e_1$ is defined in \eqref{3.55a}. Notice that
$$
-e_1(1-b^{\text{max}}_c)-2c = -\left(\bar{V}^{\text{max}}_c(b^{\text{max}}_c,1)-\bar{V}^{\text{max}}_c(1-b^{\text{max}}_c,1)-c\right) -2c=0,
$$
by \eqref{eq:switching:max}. It follows from the properties of $e_1$ that we already discussed in part 1 that $-e_1(1-x)-2c<0$ for all $x\in\,]b^{\text{max}}_c,1]$. This proves the desired inequality in this last case and completes the proof.
\end{proof}

Let us define $K^c: \R_+ \times [0,1] \times\{-1,1\}\times\N \longrightarrow \R_+$ by
\begin{equation}\label{eq:def:K}
(t, x, a, n) \mapsto K^c(t, x, a, n):=t - cn + \bar{V}^{\text{max}}_c(x, a).
\end{equation}
For $A\in\mathcal{A}$, we consider the process $K^{c,A}_t=K^c(t,X^A_t, A_t, N_t(A))$. The next corollary follows immediately from Lemma~\ref{lem:ineq:Wbar}.

\begin{Cor}\label{cor:DeltaK}
Let $0<c\leq c^*(\mu)$, let $G^c$ denote the control satisfying~(\ref{eq:def:candidate:Gc}) and let $A\in\mathcal{A}$ be any admissible control. Then, for all $x\in[0,1]$ and $a\in\{-1,1\}$, $\Prob_{x,a}$-a.s., for all $s\in\R_+$:
(1) $\Delta K^{c,G^c}_s=0$;
  (2) $\Delta K^{c,\,A}_s\leq0$;
  (3) $\Delta K^{c,\,A}_s<0$  if and only if $X^A_s\in C^{\text{max}}_{A_{s-}}$ and $A_s\neq A_{s-}$.
\end{Cor}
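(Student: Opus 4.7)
The plan is to mirror the proof of Corollary~\ref{cor:DeltaH}, with the sign conventions adjusted to reflect that $K^c$ contains the term $-cn$ instead of $+cn$, and that the role of Lemma~\ref{lem:ineq:Vbar} is now played by Lemma~\ref{lem:ineq:Wbar}. Since $t \mapsto t$, $s \mapsto X^A_s$, and the function $\bar{V}^{\text{max}}_c$ are continuous in their respective arguments, any jump of $s\mapsto K^{c,A}_s$ must come from a jump of the pair $(A_s, N_s(A))$, which can only occur at a switching time of $A$; at any continuity time of $A$ we therefore have $\Delta K^{c,A}_s = 0$ automatically.

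At a switching time $s$ of $A$ one has $A_{s-}\neq A_s$ and $\Delta N_s(A)=1$, while $X^A_s$ does not jump, so
\[
\Delta K^{c,A}_s = -c + \bar{V}^{\text{max}}_c(X^A_s,-A_{s-}) - \bar{V}^{\text{max}}_c(X^A_s,A_{s-}).
\]
I would then split into two cases according to the location of $X^A_s$ relative to the switching region for the drift $A_{s-}$. If $X^A_s\in D^{\text{max}}_{A_{s-}}$, then the switching identity~(\ref{eq:switching:max}) combined with the symmetry~(\ref{eq:symmetry:max}) gives $\bar{V}^{\text{max}}_c(X^A_s,A_{s-}) = \bar{V}^{\text{max}}_c(X^A_s,-A_{s-}) - c$, so $\Delta K^{c,A}_s = 0$. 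If instead $X^A_s\in C^{\text{max}}_{A_{s-}}$, Lemma~\ref{lem:ineq:Wbar} applied with $a=A_{s-}$ gives $\bar{V}^{\text{max}}_c(X^A_s,A_{s-}) > \bar{V}^{\text{max}}_c(X^A_s,-A_{s-}) - c$, so $\Delta K^{c,A}_s < 0$. These two cases together yield parts (2) and (3) of the corollary.

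Finally, for part (1), I would invoke the construction~(\ref{eq:def:candidate:Gc}) of $G^c$: by definition, $G^c$ switches only when $X^{G^c}_s\in D^{\text{max}}_{G^c_{s-}}$. Hence every jump of $G^c$ falls into the first case of the case split above, giving $\Delta K^{c,G^c}_s = 0$ for all $s\in\R_+$, $\Prob_{x,a}$-almost surely. There is no substantive obstacle: the entire argument is a case analysis that simply translates Lemma~\ref{lem:ineq:Wbar} into the language of jumps of $K^{c,A}$.
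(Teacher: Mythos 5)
Your proof is correct and follows essentially the same route as the paper: the paper simply states that the corollary ``follows immediately from Lemma~\ref{lem:ineq:Wbar}'', meaning exactly the case analysis you spell out, mirroring the proof of Corollary~\ref{cor:DeltaH} with the sign of the cost term reversed and the switching identity~(\ref{eq:switching:max}) and Lemma~\ref{lem:ineq:Wbar} in place of~(\ref{eq:switching}) and Lemma~\ref{lem:ineq:Vbar}. Your observation that the two cases $X^A_s\in D^{\text{max}}_{A_{s-}}$ and $X^A_s\in C^{\text{max}}_{A_{s-}}$ are exhaustive also correctly delivers both directions of the equivalence in part~(3).
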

This leads to the following property of the process $K^{c,\,A}$.

\begin{Prop}\label{prop:martingale:max}
Let $G^c$ denote the candidate strategy satisfying~(\ref{eq:def:candidate:Gc}), let $K^c$ be defined by~(\ref{eq:def:K}) and let $c^*(\mu)$ be given by~(\ref{eq:def:c*}). If $0<c\leq c^*(\mu)$, then:
\begin{enumerate}
  \item the process $(K^{c,G^c}_{t\wedge\tau^{G^c}})_{t\geq0}$ is a martingale under $\Prob_{x,a}$;
  \item for any $A\in\mathcal{A}$, the process $(K^{c,A}_{t\wedge\tau^A})_{t\geq0}$ is a supermartingale under $\Prob_{x,a}$.
\end{enumerate}
\end{Prop}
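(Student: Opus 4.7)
The plan is to mimic the proof of Proposition~\ref{prop:martingale} step by step, making only the sign changes forced by the definition $K^c(t,x,a,n)=t-cn+\bar{V}^{\text{max}}_c(x,a)$ (note the minus sign on $cn$) and by the opposite-sign statements of Lemma~\ref{lem:generator:Wbar} and Corollary~\ref{cor:DeltaK}. As in that proof, I first extend $\bar{V}^{\text{max}}_c$ to $[0,1]\times[-1,1]$ by linear interpolation in the second variable, so that $K^c$ is $\mathcal{C}^\infty$ in $t,y,n$, is globally $\mathcal{C}^1$ in $x$ on $[0,1]$, and is $\mathcal{C}^\infty$ in $x$ off the finite set of singularities $\mathcal{D}^{\text{max}}=\{a^{\text{max}}_c,b^{\text{max}}_c,1-a^{\text{max}}_c,1-b^{\text{max}}_c\}$.

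Next I apply the local time-space formula of \cite[Theorem 3.2]{Peskir07} to $K^{c,A}_{t\wedge\tau^A}$. The smooth fit conditions \eqref{eq:smooth:fit1:max}--\eqref{eq:smooth:fit2:max} annihilate the local time integral at $\mathcal{D}^{\text{max}}$; the occupation time of $X^A$ at the finite set $\mathcal{D}^{\text{max}}$ is $\lambda$-a.s.~zero since $X^A$ is a non-degenerate diffusion; the processes $(N_t(A))$ and $(A_t)$ are piecewise constant so their ``continuous'' contributions vanish and only the jump sum survives. Putting this together, and using that $\frac{\partial K^c}{\partial t}=1$ and $\frac{\partial K^c}{\partial n}=-c$, one gets
\begin{equation*}
K^{c,A}_{t\wedge\tau^A}=K^{c,A}_0+\int_0^{t\wedge\tau^A}\!\!\bigl(1+\mathbb{L}_{A_s}\bar{V}^{\text{max}}_c(X^A_s,A_s)\bigr)\,ds+\int_0^{t\wedge\tau^A}\!\!\frac{\partial \bar{V}^{\text{max}}_c}{\partial x}(X^A_s,A_s)\,dB_s+\sum_{0<s\leq t\wedge\tau^A}\!\!\Delta K^{c,A}_s,
\end{equation*}
where at points of $\mathcal{D}^{\text{max}}$ the second derivative is assigned arbitrarily (say, the right-hand value), which is irrelevant since the integrand is Lebesgue-a.e.~equal to a measurable function anyway.

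For the first assertion, I take $A=G^c$. By construction of $G^c$ (see \eqref{eq:def:candidate:Gc}), at every time $s\leq\tau^{G^c}$ the pair $(X^{G^c}_s,G^c_s)$ lies in $C^{\text{max}}_{G^c_s}$, so the drift integrand vanishes by Lemma~\ref{lem:generator:Wbar}, and $\Delta K^{c,G^c}_s=0$ by part~(1) of Corollary~\ref{cor:DeltaK}. Hence $K^{c,G^c}_{t\wedge\tau^{G^c}}=K^{c,G^c}_0+\int_0^{t\wedge\tau^{G^c}}\frac{\partial\bar{V}^{\text{max}}_c}{\partial x}(X^{G^c}_s,G^c_s)\,dB_s$; the integrand is bounded on $[0,1]\times\{\pm1\}$, so this stochastic integral stopped at the bounded-expectation stopping time $\tau^{G^c}$ (Proposition~\ref{prop_esperance_tauGc}) is a true martingale.

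For the second assertion, let $A\in\mathcal{A}$ be arbitrary. Lemma~\ref{lem:generator:Wbar} gives $1+\mathbb{L}_{A_s}\bar{V}^{\text{max}}_c(X^A_s,A_s)\leq 0$ for $\lambda$-a.e.~$s$ (zero on the continuation region, negative on the switching region interior), and part~(2) of Corollary~\ref{cor:DeltaK} gives $\Delta K^{c,A}_s\leq 0$ for every $s$. Writing the formula above between times $u<t$,
\begin{equation*}
K^{c,A}_{t\wedge\tau^A}-K^{c,A}_{u\wedge\tau^A}\leq\int_{u\wedge\tau^A}^{t\wedge\tau^A}\frac{\partial \bar{V}^{\text{max}}_c}{\partial x}(X^A_s,A_s)\,dB_s,
\end{equation*}
and taking conditional expectation with respect to $\Filt_{u\wedge\tau^A}$ (the stochastic integral is a martingale by the same boundedness argument) shows $\E(K^{c,A}_{t\wedge\tau^A}\mid\Filt_{u\wedge\tau^A})\leq K^{c,A}_{u\wedge\tau^A}$, which is the supermartingale property. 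The only subtle point — and the sole place that is not pure sign-flipping from Proposition~\ref{prop:martingale} — is checking that Corollary~\ref{cor:DeltaK} truly upgrades to $\Delta K^{c,A}_s\leq 0$ in both cases $X^A_s\in C^{\text{max}}_{A_{s-}}$ (where Lemma~\ref{lem:ineq:Wbar} applies) and $X^A_s\in D^{\text{max}}_{A_{s-}}$ (where $\Delta K^{c,A}_s=0$ by \eqref{eq:switching:max}); both are already packaged into Corollary~\ref{cor:DeltaK}, so no further work is needed.
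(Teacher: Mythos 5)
Your proof is correct and is exactly the argument the paper intends: the paper's own proof of Proposition~\ref{prop:martingale:max} simply states that it follows ``in the same way as in the proof of Proposition~\ref{prop:martingale}'', using the local time-space formula together with Lemma~\ref{lem:generator:Wbar} and Corollary~\ref{cor:DeltaK}, which is precisely the sign-flipped transcription you carried out.
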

\begin{proof} This is established by using the local time-space formula, as well as Lemma~\ref{lem:generator:Wbar} and Corollary~\ref{cor:DeltaK}, in the same way as in the proof of Proposition~\ref{prop:martingale}.
\end{proof}

\begin{proof}[Proof of Theorem~\ref{verification:thm:max}.]
The proof is exactly the same as the one of Theorem~\ref{verification:thm:min}, using in this case the supermartingale property given in Proposition~\ref{prop:martingale:max}.
\end{proof}

\section{Further results}\label{sec:further}
 In this last section, we present a result on generic uniqueness of the optimal strategy, as well as a scaling property. We also consider the limiting case where $c\downarrow 0$ and verify that it is consistent with the classical result of~\cite{FlemingSoner06}. These statements are given only for the minimization problem but it is not difficult to see that they are also valid for the maximization problem. At the end of this section, we provide illustrations of the value functions of both problems.

 In the minimization problem, strictly speaking, we do not have uniqueness of the optimal control in general, since for $c=c^*(\mu)$, the strategy $A^{c^*}$ is equivalent to the constant strategy $\tilde{A}$ and both are optimal. It turns out, however, that this is the only case where there are two distinct optimal strategies. The case where $c > c^*(\mu)$ has already been discussed in Theorem \ref{verification:thm:min}, so we now consider the case $c \leq c^*(\mu)$.
\begin{Prop}[Generic uniqueness]\label{prop:uniqueness}
Let $0<c\leq c^*(\mu)$ and let $A\in\mathcal{A}$ be a strategy such that for some $(x,a)\in[0,1]\times\{-1,1\}$, either $p_1 >0$ or $0<c< c^*(\mu)$ and $p_2 >0$, where
\begin{align*}
 p_1&:= \Prob_{x,a}\left(\exists \,t\in\left[0,\tau^A\right]: X^A_t\in C_{A_{t-}} \mbox{ and } A_t\neq A_{t-}\right),\\
   p_2&:= \Prob_{x,a}\left(\exists \, t \in [0, \tau^A[: X^A_t\in D_{A_t}\right).
\end{align*}
Then $A$ is $\Prob_{x,a}$-sub-optimal, in the sense that
$V_c(x,a)<\E_{x,a}\left(\tau^A+cN_{\tau^A}(A)\right).$
\end{Prop}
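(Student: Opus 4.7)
The plan is to exploit the non-negative decomposition of the submartingale $H^{c,A}_{\cdot\wedge\tau^A}$ obtained in the proof of Proposition~\ref{prop:martingale}, and to show that each of the hypotheses $p_1>0$ and $p_2>0$ forces one of its non-negative ingredients to have strictly positive expectation. Without loss of generality we may assume $J_c(x,a,A)<+\infty$, since otherwise the desired strict inequality is immediate. Applying the local time--space formula as in the proof of Proposition~\ref{prop:martingale} yields
$$H^{c,A}_{t\wedge\tau^A} = \bar{V}_c(x,a) + I_t + J_t + M_t,$$
where
$$I_t := \int_0^{t\wedge\tau^A}\bigl(1+\mathbb{L}_{A_s}\bar{V}_c(X^A_s,A_s)\bigr)\,ds, \qquad J_t := \sum_{0\leq s\leq t\wedge\tau^A}\Delta H^{c,A}_s,$$
and $M_t$ is the stochastic integral in~\eqref{eq:ito:formula}, which is a true martingale since $\partial\bar{V}_c/\partial x$ is bounded on $[0,1]$. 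By Lemma~\ref{lem:generator:Vbar} the process $I_t$ is non-negative and non-decreasing, and by Corollary~\ref{cor:DeltaH}(2) the same holds for $J_t$. Taking expectations and letting $t\to\infty$ (dominated convergence for $H^{c,A}_{t\wedge\tau^A}$, bounded by the integrable $\tau^A+cN_{\tau^A}(A)+\|\bar{V}_c\|_\infty$; monotone convergence for $I_t,J_t$) and invoking the boundary conditions satisfied by $\bar V_c$, one obtains
$$J_c(x,a,A) \,=\, \bar{V}_c(x,a) + \E_{x,a}[I_\infty] + \E_{x,a}[J_\infty] \,=\, V_c(x,a) + \E_{x,a}[I_\infty] + \E_{x,a}[J_\infty],$$
the last equality by Theorem~\ref{verification:thm:min}. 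It thus suffices to prove that $\E_{x,a}[I_\infty]+\E_{x,a}[J_\infty]>0$ under each of the two alternatives.

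Under $p_1>0$, there is an event of probability at least $p_1$ on which some $t^*\in[0,\tau^A]$ satisfies $X^A_{t^*}\in C_{A_{t^*-}}$ with $A_{t^*}\neq A_{t^*-}$. By Corollary~\ref{cor:DeltaH}(3) we have $\Delta H^{c,A}_{t^*}>0$ on this event, hence $J_\infty>0$ on a set of positive probability. Since $J_\infty\geq 0$ almost surely, it follows that $\E_{x,a}[J_\infty]>0$, and the conclusion is reached in this case.

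Under the remaining alternative $0<c<c^*(\mu)$ and $p_2>0$, we instead argue that $\E_{x,a}[I_\infty]>0$. On the $p_2$-event one has $t_0\in[0,\tau^A[\,$ with $X^A_{t_0}\in D_{A_{t_0}}$. Since $A$ is piecewise constant and right-continuous, there exists $\eta>0$ such that $A_s\equiv A_{t_0}$ on $[t_0,t_0+\eta[\,$; because $c<c^*(\mu)$ forces $a_c<b_c$, the region $D_{A_{t_0}}$ has non-empty interior. Standard pathwise properties of the Brownian motion with drift $A_{t_0}\mu$ on $[t_0,t_0+\eta[\,$ (it spends zero Lebesgue time at any single point and immediately enters the interior of any interval containing its starting position) ensure that, almost surely on the $p_2$-event, $X^A_s$ spends positive Lebesgue time in $D_{A_{t_0}}\setminus\partial D_{A_{t_0}}=D_{A_s}\setminus\partial D_{A_s}$ before leaving this interval or before the next switch of $A$. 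By Lemma~\ref{lem:generator:Vbar} the integrand $1+\mathbb{L}_{A_s}\bar{V}_c(X^A_s,A_s)$ is strictly positive on this set, so $I_\infty>0$ on a set of positive probability and hence $\E_{x,a}[I_\infty]>0$.

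The main obstacle is the final Lebesgue-time argument in the $p_2$ case when $X^A_{t_0}\in\partial D_{A_{t_0}}$: one must verify that, even starting from the boundary, the controlled process accumulates a strictly positive amount of time in the interior of $D_{A_{t_0}}$ before the strategy $A$ switches. This is a standard consequence of the oscillation of Brownian motion with drift at any reachable point, but is the only step that is not a direct quotation of results already proved in the paper.
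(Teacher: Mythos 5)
Your proposal is correct and follows essentially the same route as the paper: both decompose $H^{c,A}_{\cdot\wedge\tau^A}$ via the local time--space formula into the drift integral, the jump sum and a bounded martingale, then show that $p_1>0$ forces a strictly positive expected jump contribution (via Corollary~\ref{cor:DeltaH}) and that $p_2>0$ forces positive expected occupation time of the interior of the switching region, where the integrand is strictly positive by Lemma~\ref{lem:generator:Vbar}. The occupation-time step you flag as the only non-quoted ingredient is asserted in the paper at the same level of detail (its equation \eqref{e4.1}, justified by the ``irregular behavior of sample paths of diffusion processes''), so your treatment matches the paper's.
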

\begin{Rem}
The condition $p_1>0$ means that with positive probability, the strategy $A$ prescribes at least once to switch drifts in the continuation area of the control $A^c$. The condition $p_2>0$ means that with positive probability, the strategy $A$ prescribes at least once to continue without switching in a switching region of $A^c$.
\end{Rem}
\begin{proof}
If $\E_{x,a}\left(\tau^A\right)=+\infty$, then it is clear that $\E_{x,a}\left(\tau^A+cN_{\tau^A}(A)\right)>V_{c^*}(x,a)$. Thus, we can assume that $\E_{x,a}(\tau^A)<+\infty$. Suppose first that $p_1>0$. Then by Corollary~\ref{cor:DeltaH},
$$\E_{x,a}\left(\sum_{0< s\leq \tau^A} \Delta H^{c,\,A}_s\right)>0.$$
Setting $u=0$ in~(\ref{eq:inequality:itoformula:HcA}), we find that
$$H^{c,A}_{t\wedge\tau^A}\geq H^{c,A}_0 +\int_0^{t\wedge\tau^A}\frac{\partial V_c}{\partial x} (X^A_s, A_s)\,dB_s+\sum_{0< s\leq {t\wedge\tau^A}} \Delta H^{c,\,A}_s,$$
and taking the expectations, applying the monotone convergence and the dominated convergence theorems (recall that $V$ is bounded), we get
$$\E_{x,a}\left(H^{c,A}_{\tau^A}\right)\geq H^{c,A}_0+\E_{x,a}\left(\sum_{0< s\leq \tau^A} \Delta H^{c,\,A}_s\right)>H^{c,A}_0=V_c(x,a).$$
Since the left-hand side is equal to $\E_{x,a}\left(\tau^A+cN_{\tau^A}(A)\right)$, this proves the statement when $p_1>0$.

Suppose now that $p_2>0$. Let $\lambda$ denote Lebesgue measure. Notice that the interior int$(D_{A_t})$ of $D_{A_t}$ is not empty for $0<c<c^*(\mu)$. By right-continuity of $s \mapsto A_s$ and because of the irregular behavior of sample paths of diffusion processes, on the event $\{\exists \, t \in [0, \tau^A[: X^A_t\in D_{A_t}\}$,
\begin{equation}\label{e4.1}
   \lambda\{s \in [0,\tau^A[: X^A_s \in \mbox{int}(D_{A_s})\} >0.
\end{equation}
Setting $u=0$ in~(\ref{eq:equality:itoformula:HcA}) and then applying successively Lemma~\ref{lem:generator:Vbar} and Corollary~\ref{cor:DeltaH}, we get
$$H^{c,A}_{t\wedge\tau^A}\geq H^{c,A}_0 +\int_0^{t\wedge\tau^{A}}\frac{\partial V_c}{\partial x}(X^A_s, A_s)\,dB_s+ \int_0^{t\wedge\tau^{A}}\left(1+{\mathbb L}_{A_s}V_c(X^A_s, A_s)\right)\mathbbm{1}_{\{X^A_s\in D_{A_s}\}}\,ds.$$
Again by Lemma~\ref{lem:generator:Vbar}, the integrand of the last integral is strictly positive if $X^A_s$ belongs to the interior of $D_{A_s}$. Taking expectations in the previous inequality and applying successively the dominated and monotone convergence theorems and the hypothesis $p_2>0$ with\eqref{e4.1}, we find that
$$\E_{x,a}\left(\tau^A+cN_{\tau^A}(A)\right)=\E_{x,a}\left(H^{c,A}_{\tau^A}\right)>H^{c,A}_0=V_c(x,a),$$
which concludes the proof.
\end{proof}

If we consider a diffusion coefficient $\sigma\neq1$ for the particle, we can deduce from Theorem~\ref{verification:thm:min} the corresponding value function and  optimal control.  Let $\sigma>0$ and let $(\hat{B}_t)$ be a standard Brownian motion, $(\hat{\Filt}_t)$ be its natural filtration and $\hat{\mathcal{A}}$ be the associated set of strategies. For $\hat{A}\in\hat{\mathcal{A}}$, consider the s.d.e.
\begin{equation}\label{eq:SDE:state:scaling}
d\hat{X}^{\hat{A}}_t =\hat{A}_t\mu\,dt+\sigma d\hat{B}_t,\qquad \hat{X}^{\hat{A}}_0=x,
\end{equation}
and the corresponding stochastic control problem whose value function is given by
\begin{equation}\label{eq:control:problem:scaling}
\hat{V}(x,a,c,\mu,\sigma)=\inf_{\hat{A}\in\hat{\mathcal{A}}} \E_{x,a}(\hat{\tau}^{\hat{A}} + c \hat{N}_{\tau^{\hat{A}}}(\hat{A})),
\end{equation}
where $\hat{\tau}^{\hat{A}}$ is the exit time of $\hat{X}^{\hat{A}}$ from $[0,1]$ and $\hat{N}(\hat{A})$ is the process that counts the discontinuities of $\hat{A}$.

\begin{Prop}[Scaling property]\label{prop:scaling}
The optimal control of problem~(\ref{eq:control:problem:scaling}) is obtained by the construction that leads to~(\ref{eq:def:candidate:Ac}), but replacing $c$ by $\sigma^2 c$ and $\mu$ by $\mu/\sigma^2$. The value function satisfies
$$\hat{V}(x,a,c,\mu,\sigma)=\frac{1}{\sigma^2}\hat{V}\left(x,a,c\sigma^2,\frac{\mu}{\sigma^2},1\right),$$
where $\hat{V}\left(x,a,c\sigma^2,\mu/\sigma^2,1\right)$ coincides with the value function of problem~(\ref{eq:control:problem:min}) with $c$  replaced by $c\sigma^2 $ and $\mu$ replaced by $\mu/\sigma^2$.
\end{Prop}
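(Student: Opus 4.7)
The plan is to reduce the general problem to the unit-diffusion case by a Brownian time change. Introduce
$$Y_s := \hat{X}^{\hat{A}}_{s/\sigma^2}, \qquad W_s := \sigma\,\hat{B}_{s/\sigma^2}, \qquad A'_s := \hat{A}_{s/\sigma^2},\qquad s\geq 0.$$
By the scaling property of Brownian motion, $W$ is a standard Brownian motion with respect to the filtration $\mathcal{G}_s := \hat{\Filt}_{s/\sigma^2}$, and substituting into~\eqref{eq:SDE:state:scaling} (or applying the change of variable $u = r/\sigma^2$ in the integral form of the SDE) shows that $dY_s = A'_s (\mu/\sigma^2)\,ds + dW_s$ with $Y_0=x$. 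Hence $Y$ solves an equation of the form~\eqref{eq:SDE:state}, driven by the standard Brownian motion $W$ with drift parameter $\mu/\sigma^2$ instead of $\mu$.

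Next, since $s \mapsto s/\sigma^2$ is a linear homeomorphism of $\R_+$, the correspondence $\hat{A}\mapsto A'$ is a bijection of $\hat{\mathcal{A}}$ onto the strategy class $\mathcal{A}$ associated with $W$: right-continuity, piecewise constancy, values in $\{-1,1\}$, and adaptedness are all preserved. The exit time of $Y$ from $\,]0,1[\,$ satisfies $\tau' = \sigma^2\,\hat{\tau}^{\hat{A}}$, and since switches are pathwise events independent of the time parametrization, $N_{\tau'}(A') = \hat{N}_{\hat{\tau}^{\hat{A}}}(\hat{A})$. Consequently,
$$\E_{x,a}\!\left(\hat{\tau}^{\hat{A}} + c\,\hat{N}_{\hat{\tau}^{\hat{A}}}(\hat{A})\right) = \frac{1}{\sigma^2}\,\E_{x,a}\!\left(\tau' + c\sigma^2\, N_{\tau'}(A')\right).$$
Taking the infimum over $\hat{A}\in\hat{\mathcal{A}}$ on the left and, through the bijection, over $A'\in\mathcal{A}$ on the right yields the claimed identity
$$\hat{V}(x,a,c,\mu,\sigma) = \frac{1}{\sigma^2}\,\hat{V}\bigl(x,a,c\sigma^2,\mu/\sigma^2,1\bigr),$$
in which the right-hand side is precisely the value function of~\eqref{eq:control:problem:min} with $c$ and $\mu$ replaced by $c\sigma^2$ and $\mu/\sigma^2$.

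Finally, Theorem~\ref{verification:thm:min} applied to the unit-diffusion problem with parameters $(c\sigma^2, \mu/\sigma^2)$ produces an optimal strategy via the construction leading to~\eqref{eq:def:candidate:Ac}; pulled back through the inverse time change $t \mapsto \sigma^2 t$, this gives the optimal strategy for the original scaled problem, which is exactly the construction described in the statement (the switching thresholds are position-based, and the space interval $[0,1]$ is untouched by the time change, so the region-based rule transfers verbatim). I expect no serious obstacle: the argument is standard Brownian scaling together with the already-established verification theorem, and the only mildly delicate point is to verify that the bijection between strategy classes respects adaptedness, which is immediate from the definition of $\mathcal{G}$.
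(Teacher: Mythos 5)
Your proof is correct and follows essentially the same route as the paper: the deterministic time change $s\mapsto s/\sigma^2$ combined with Brownian scaling ($W_s=\sigma\hat B_{s/\sigma^2}$), yielding a cost-preserving (up to the factor $\sigma^2$ and the substitution $c\mapsto c\sigma^2$, $\mu\mapsto\mu/\sigma^2$) bijection between the two strategy classes. The additional remarks on adaptedness and on the position-based switching rule being unaffected by the time change are accurate and consistent with the paper's argument.
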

\begin{proof}
Define $B_t=\sigma \hat{B}_{t/\sigma^2}$, so that $(B_t)$ is a standard Brownian motion. Setting
$$d\hat{Z}_t=\pm\mu\,dt+\sigma d\hat{B}_t, \qquad dZ_t=\pm\frac{\mu}{\sigma^2}\,dt+dB_t,$$
and $\hat{Z}_0=Z_0$, we see that $\hat{Z}_t=Z_{\sigma^2t}$.

Let $(\Filt_t)$ be the natural filtration of $(B_t)$, that is, $\Filt_t=\hat{\Filt}_{t/\sigma^2}$, and let $\mathcal{A}$ be the set of strategies associated with $(\Filt_t)$. Given $\hat{A}\in\hat{\mathcal{A}}$, define $A=(A_t)$ by $A_t=\hat{A}_{t/\sigma^2}$. This defines a one-to-one correspondence between $\hat{\mathcal{A}}$ and $\mathcal{A}$.

Let
$dX^A_t=A_t\mu\sigma^{-2}\,dt+dB_t$, with $X^A_0=x.$
Then, $X^A_{\sigma^2t}=\hat{X}^{\hat{A}}_t$ and we see that
$\tau^A=\sigma^2\hat{\tau}^{\hat{A}}$  and $N_{\sigma^2t}(A)=\hat{N}_t(\hat{A}).$
Therefore,
$$\sigma^2\left(\hat{\tau}^{\hat{A}}+c\hat{N}_{\hat{\tau}^{\hat{A}}}(\hat{A})\right)=\tau^A+c\sigma^2 N_{\tau^A}(A).$$
Minimizing the right-hand side is precisely the problem~(\ref{eq:control:problem:min}), with $c$  replaced by $c\sigma^2 $ and $\mu$ replaced by $\mu/\sigma^2$. This proves the proposition.
\end{proof}

As mentioned in the introduction, in the case where there is no switching cost ($c=0$), the solution of the control problem corresponding to~(\ref{eq:control:problem:min}) is now classical (see \cite[IV.5]{FlemingSoner06}). The value function does not depend on the initial drift and is given by
\begin{equation}\label{eq:V:limit0}
V(x)= \frac{\frac{1}{2}-\left|\frac{1}{2}-x\right|}{\mu}-\frac{1}{2\mu^2}e^{-\mu} \left(e^{2\mu\left(\frac{1}{2}-\left|\frac{1}{2}-x\right|\right)}-1\right), \qquad x\in[0,1].
\end{equation}
The optimal control, which would not be admissible in our setting, is given by
\begin{equation}
A_t =\sgn\left(\tfrac{1}{2}-X^A_t\right), 
\qquad \mbox{with } \qquad dX^A_t =A_t \mu\,dt+dB_t \quad\mbox{ on } \{t\leq\tau^A\}, \label{eq:eds:XA0}
\end{equation}
where $\sgn x=1$ if $x\geq0$ and $\sgn x=-1$ if $x<0$.  We observe that this control $A$ is not piecewise constant, since it corresponds to switching regions given by $D_1=[0,\frac{1}{2}]$ and $D_{-1}=[\frac{1}{2},0]$. Thus, there exists only a weak solution of~(\ref{eq:eds:XA0}) which is given by Tanaka's formula (see e.g.~\cite[Sections 7.3 and 10.4]{Chung_williams}). In the next proposition, we show that \eqref{eq:V:limit0} can be obtained as a limit of $V_c(x,a)$ as $c\downarrow0$.

\begin{Prop}\label{prop:limit:cto0}
Let $V(x)$ denote the function defined in~(\ref{eq:V:limit0}). The solution $\{\bar{V}_c,a_c,b_c\}$ of~(\ref{eq:FBP:Vc}) satisfies
$\displaystyle\lim_{c\downarrow 0} a_c=0$,  $\displaystyle\lim_{c\downarrow 0} b_c=\tfrac{1}{2}\,$ and
$\displaystyle\lim_{c\downarrow 0}\bar{V}_c(x,a)=V(x)$, for all $x\in[0,1]$ and $a\in\{\pm1\}$.
\end{Prop}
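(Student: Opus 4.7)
\medskip
\noindent\textbf{Proof plan.} The strategy is to first establish convergence of the free boundaries $a_c$ and $b_c$ using the transcendental equations characterizing them, then read off the pointwise limit of $\bar V_c(x,a)$ directly from the explicit formula \eqref{eq:sol:FBP:min:c<c*}.

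For $b_c\to\tfrac12$, I would use identity \eqref{eq:t0:sinh:cosh}, which rewrites the defining equation \eqref{eq:bc} as
\[
   t_c-\tanh(t_c)=-c\mu^2,\qquad t_c=\mu(2b_c-1).
\]
The map $t\mapsto t-\tanh t$ is continuous and strictly increasing on $\R$ (its derivative is $\tanh^2 t$, which vanishes only at the origin) and equals $0$ exactly at $t=0$, so $t_c\to 0$ as $c\downarrow 0$, and consequently $b_c\to\tfrac12$. Feeding this into \eqref{eq:alpha} then yields $\alpha_c\to -e^{-\mu}/(2\mu^2)=:\alpha_0$.

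The subtler step is $a_c\to 0$. Writing $s_c=2\mu a_c$, recall that $s_c$ is the unique zero of $\tilde h_c$ in $(0,t_c+\mu)$. A direct differentiation of \eqref{eq:def:hctildes} gives $\tilde h_c(0)=-c\mu^2$, $\tilde h_c'(0)=0$, and $\tilde h_c''(0)=1+2\mu^2\alpha_c=1-e^{-\mu}/\cosh(t_c)\geq 1-e^{-\mu}>0$, uniformly in $c$. Moreover, the pointwise limit $\tilde h_0(s)=-(e^{-\mu}/2)e^{2s}+(1+e^{-\mu})e^s-s-e^{-\mu}/2-1$ satisfies $\tilde h_0(0)=0$, and its derivative $\tilde h_0'(s)=-e^{-\mu}e^{2s}+(1+e^{-\mu})e^s-1$ is a concave quadratic in $e^s$ with roots exactly at $e^s=1$ and $e^s=e^\mu$, hence $\tilde h_0'>0$ on $(0,\mu)$. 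This shows $\tilde h_0>0$ on $(0,\mu]$. Since $\tilde h_c\to\tilde h_0$ uniformly on $[0,\mu]$ (the coefficients depend continuously on $c$ via $\alpha_c$), for any $\varepsilon>0$ one has $\inf_{[\varepsilon,\mu]}\tilde h_c>0$ for all $c$ small, forcing the zero $s_c\in(0,t_c+\mu)\subset(0,\mu)$ to lie in $(0,\varepsilon)$. Hence $s_c\to 0$ and $a_c\to 0$.

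With the barrier convergence in hand, I fix $(x,a)\in[0,1]\times\{\pm1\}$ and, using the symmetry $\bar V_c(x,-1)=\bar V_c(1-x,1)$ in \eqref{eq:symmetry} together with the obvious invariance $V(1-x)=V(x)$, reduce to $a=1$. For $x\in\{0,1\}$ the result follows from the boundary condition \eqref{eq:boundary:cond}. For $x\in(0,\tfrac12]$, since $a_c\to 0$ and $b_c\to\tfrac12$ one has $x\in(a_c,b_c]$ for all $c$ small enough, and the middle line of \eqref{eq:sol:FBP:min:c<c*} combined with $\alpha_c\to\alpha_0$ and $c\to 0$ gives
\[
   \bar V_c(x,1)\;\longrightarrow\;\frac{x}{\mu}-\frac{e^{-\mu}}{2\mu^2}\bigl(e^{2\mu x}-1\bigr)=V(x).
\]
For $x\in(\tfrac12,1)$, eventually $x>b_c$, and the third line of \eqref{eq:sol:FBP:min:c<c*} yields the analogous limit $(1-x)/\mu-(e^{-\mu}/(2\mu^2))(e^{2\mu(1-x)}-1)=V(x)$.

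\medskip
\noindent\textbf{Main obstacle.} The delicate point is the convergence $a_c\to 0$, because the target $s=0$ is a \emph{degenerate} critical point of $\tilde h_c$ for every $c$ (both $\tilde h_c(0)$ and $\tilde h_c'(0)$ vanish as $c\downarrow 0$), so the implicit-function theorem cannot be applied at $(c,s)=(0,0)$. The argument must instead exploit the \emph{uniform} positivity $\tilde h_c''(0)\geq 1-e^{-\mu}$ together with the global positivity of $\tilde h_0$ on $(0,\mu]$. All remaining convergences are mere substitutions in closed-form expressions.
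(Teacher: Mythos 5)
Your proof is correct and follows essentially the same route as the paper --- passing to the limit in the transcendental equations (\ref{eq:bc}) and (\ref{eq:ac}) to get $b_c\to\tfrac12$, $a_c\to0$, $\alpha_c\to-\tfrac{e^{-\mu}}{2\mu^2}$, and then substituting into (\ref{eq:sol:FBP:min:c<c*}) --- while supplying the continuity and uniform-convergence details that the paper leaves implicit. The only slip is at $x=\tfrac12$, where $x\in(a_c,b_c]$ never holds because $b_c<\tfrac12$ strictly, so the third line of (\ref{eq:sol:FBP:min:c<c*}) applies there instead; it yields the same limit (the two branches differ by the additive constant $c\to0$), so the conclusion is unaffected.
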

\begin{proof}
Observe first that when $c\downarrow 0$, equations~(\ref{eq:bc}) and~(\ref{eq:ac}) become respectively
\begin{align*}
e^{4\mu x-2\mu} ( 2\mu x-\mu-1) + 2\mu x-\mu+1&=0,\\
\mu^2\alpha_0 e^{4\mu y}+ (1-2\mu^2\alpha_0)e^{2\mu y} +\mu^2\alpha_0 -2\mu y-1&=0,
\end{align*}
where $\alpha_0=-\frac{1}{2\mu^2}e^{-\mu}$. The unique solution of the first equation is $b_0=\frac{1}{2}$, so $\lim_{c\downarrow 0} b_c = \frac{1}{2}$. The second equation has exactly two solutions, one of which is $> \frac{1}{2}$ and the other is $a_0 = 0$. Since $ 0<a_c<\frac{1}{2}$, we deduce that $\lim_{c\downarrow 0} a_c = 0$. Putting these values into~(\ref{eq:alpha}) and~(\ref{eq:beta}), respectively, we find that when $c\downarrow0$, $\alpha_c\rightarrow-\frac{1}{2\mu^2}e^{-\mu}$ and $\beta_c\rightarrow\frac{e^{-\mu}}{2\mu^2}-\frac{1}{\mu^2}\,$. Therefore, according to~(\ref{eq:sol:FBP:min:c<c*}), we find that
$$\lim_{c\downarrow 0} V_c(x,a) =\left\{\begin{array}{ll}
                    \frac{x}{\mu}-\frac{1}{2\mu^2}e^{-\mu} \left(e^{2\mu x}-1\right), & x\in\left[0,\frac{1}{2}\right], \\[7pt]
                    \frac{1-x}{\mu}-\frac{1}{2\mu^2}e^{-\mu} \left(e^{2\mu(1-x)}-1\right), & x\in\,\left]\frac{1}{2},1\right]. \\
                    \end{array}\right.$$
These formulas coincide with~(\ref{eq:V:limit0}) and the proof is complete.
\end{proof}

In Figure~\ref{figure:comparison:WVf}, we give the graph of the value function of both problems~(\ref{eq:control:problem:min}) and~(\ref{eq:control:problem:max}) for two possible values of the switching cost ($c=0.01$ and $c=0.04$) when the intensity of the drift is $\mu=1$. In this case, the critical value of the cost is $c^*(1)\approx 0.058$. The numerical value of the switching boundaries in the case where $c=0.01$ are given by:
\begin{equation*}
  a_c \approx 0.0882, \qquad
  b_c \approx 0.3426, \qquad
  a^{\text{max}}_c=1-b_c, \qquad
  b^{\text{max}}_c \approx 0.9387,
\end{equation*}
and in the case where $c=0.04$, they are given by:
\begin{equation*}
  a_c \approx 0.1737, \qquad
  b_c \approx 0.2451, \qquad
  a^{\text{max}}_c=1-b_c, \qquad
  b^{\text{max}}_c \approx 0.8494.
\end{equation*}
Because of the symmetry, Figure \ref{figure:comparison:WVf} shows only the value function corresponding to a positive initial drift.

\begin{figure}[H]
\begin{center}
\includegraphics[width=7cm]{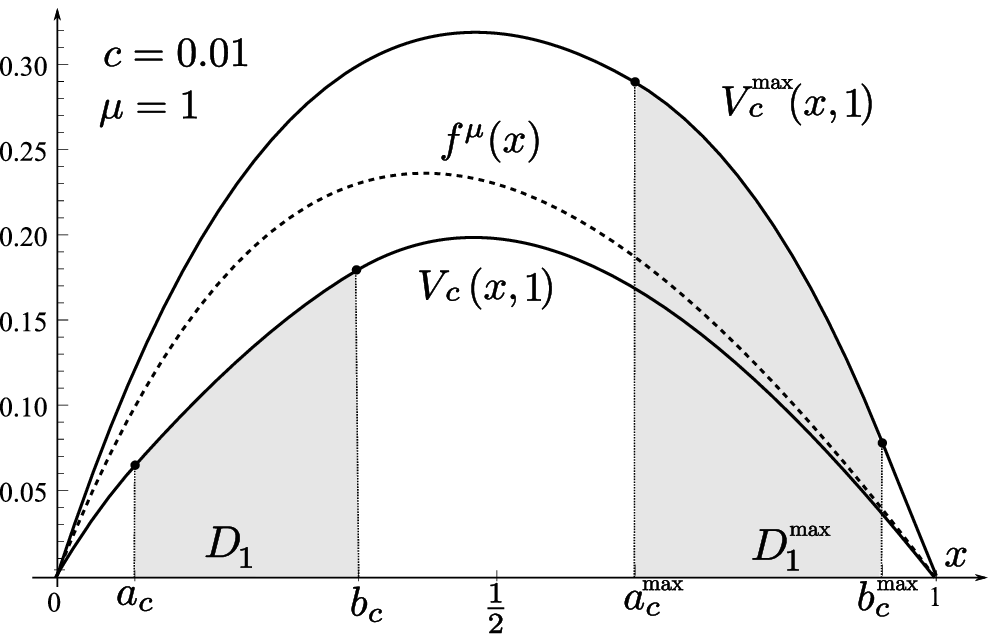}\hspace{1cm}
\includegraphics[width=7cm]{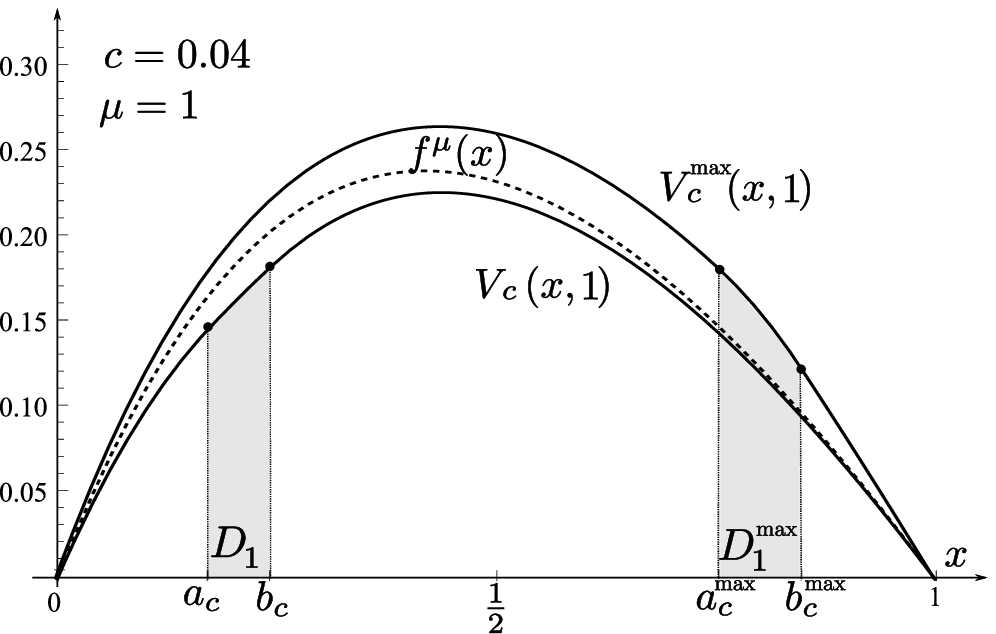}
\caption{Graphs of $V^{\text{max}}_c(x,1)$ and $V_c(x,1)$ for two different values of the cost. The dotted line is the graph of the function $f^\mu(x)$ defined in~(\ref{eq:def:fnu}), which coincides with the payoff of the constant strategy.}\label{figure:comparison:WVf}
\end{center}
\end{figure}


\addcontentsline{toc}{section}{References}



\vskip 16pt

Institut de math\'ematiques

\'Ecole Polytechnique F\'ed\'erale de Lausanne

Station 8

CH-1015 Lausanne

Switzerland
\vskip 12pt

robert.dalang@epfl.ch, laura.vinckenbosch@gmail.com

\end{document}